\newcommand{\M}{\mathcal{M}}
\newcommand{\N}{\mathcal{N}}
\newcommand{\lra}{\leftrightarrow}
\renewcommand{\phi}{\varphi}
\newcommand{\Kh}{\mathsf{K\!h }}
\newcommand{\K}{\mathsf{K}}
\newcommand{\hK}{\widehat{\K}}
\newcommand{\KhL}{\mathbf{KhL}}
\newcommand{\Prop}{\mathbf{P}}
\newcommand{\PL}{\mathbf{PL}^\tensor}
\newcommand{\InqL}{\mathbf{InqB}}
\newcommand{\InqKhL}{\mathbf{InqKhL}}
\newcommand{\PD}{\mathbf{PD}}
\newcommand{\PID}{\mathbf{PID}}
\newcommand{\PDV}{\mathbf{PD}^{\vee}}
\newcommand{\LKhP}{\mathbf{PALKh\Pi}}
\newcommand{\LELPi}{\mathbf{EL\Pi}}
\newcommand{\LEL}{\mathbf{EL}}
\newcommand{\LPALPi}{\mathbf{PAL\Pi}}
\newcommand{\PLT}{\mathbf{PL}^{\tensor^k_n}}
\newcommand{\LKhPT}{\mathbf{PALKh\Pi G}}
\newcommand{\arank}{\mathbf{ar}}
\newcommand{\R}{R}
\newcommand{\RR}{\textit{RL}}
\newcommand{\V}{V}
\renewcommand{\S}{S}
\renewcommand{\emptyset}{\varnothing}
\newcommand{\SKhPALPi}{\mathsf{S5KhPAL\Pi^+}}
\newcommand{\SKhPALPiT}{\mathsf{S5KhPAL\Pi^+ G}}
\newcommand{\SFive}{\textsf{S5}}
\newcommand{\TAUT}{\ensuremath{\mathtt{TAUT}}}
\newcommand{\generalization}{\ensuremath{\mathtt{GEN_\forall}}}
\newcommand{\NECK}{\ensuremath{\mathtt{NEC_\K}}}
\newcommand{\DISTK}{\ensuremath{\mathtt{DIST_\K}}}
\newcommand{\DISTA}{\ensuremath{\mathtt{DIST_\forall}}}
\newcommand{\AxTK}{\ensuremath{\mathtt{T_{\K}}}}
\newcommand{\AxTransK}{\ensuremath{\mathtt{4_{\K}}}}
\newcommand{\AxEucK}{\ensuremath{\mathtt{5_{\K}}}}
\newcommand{\Barcan}{\ensuremath{\mathtt{BC}}}
\newcommand{\Anp}{\ensuremath{\mathtt{[\, ]_{p}}}}
\newcommand{\Anid}{\ensuremath{\mathtt{[\, ]_{pre}}}}
\newcommand{\tensoror}{\ensuremath{\mathtt{Rd\tensor}}}
\newcommand{\Ancirc}{\ensuremath{\mathtt{[\, ]_{\bigcirc}}}}
\newcommand{\AnK}{\ensuremath{\mathtt{[\, ]_{\K}}}}
\newcommand{\Anforall}{\ensuremath{\mathtt{[\, ]_{\forall}}}}
\newcommand{\Anexists}{\ensuremath{\mathtt{[\, ]_{\exists}}}}
\newcommand{\subforall}{\ensuremath{\mathtt{SUB_{\forall}}}}
\newcommand{\SU}{\ensuremath{\mathtt{SU}}}
\newcommand{\MP}{\ensuremath{\mathtt{MP}}}
\newcommand{\KKhp}{\ensuremath{\mathtt{KKhp}}}
\newcommand{\KhK}{\ensuremath{\mathtt{KhK}}}
\newcommand{\KhC}{\ensuremath{\mathtt{Kh_{\land}}}}
\newcommand{\KhD}{\ensuremath{\mathtt{Kh_{\lor}}}}
\newcommand{\KhI}{\ensuremath{\mathtt{Kh_{\to}}}}
\newcommand{\Kht}{\ensuremath{\mathtt{Kh_{\tensor}}}}
\newcommand{\Khbot}{\ensuremath{\mathtt{Kh_{\bot}}}}
\newcommand{\AxTransKh}{\ensuremath{\mathtt{4_\Kh}}}
\newcommand{\AxEucKh}{\ensuremath{\mathtt{5_\Kh}}}
\newcommand{\rRE}{\ensuremath{\mathtt{rRE}}}
\newcommand{\Khgt}{\ensuremath{\mathtt{Kh_{\tensor^k_n}}}}
\newcommand{\gt}{\ensuremath{\mathtt{Rd\tensor^k_n}}}
\newcommand{\SFivePiplus}{\mathsf{S5\Pi^+}}
\newcommand{\tensor}{\otimes}
\newcommand{\dep}{=\!\!}
\newcommand{\citep}[1]{\cite{#1}}
\newtheorem{theorem}{Theorem}
\newtheorem{corollary}[theorem]{Corollary}
\newtheorem{lemma}[theorem]{Lemma}
\newtheorem{proposition}[theorem]{Proposition}
\newtheorem{definition}[theorem]{Definition}
\newtheorem{remark}{Remark}
\newenvironment{proof} {\textsc{Proof}\quad} {\hfill $\blacksquare$\\}
\begin{document}


  \title{An Epistemic Interpretation of Tensor Disjunction}
  \author{Haoyu Wang}
  \author{Yanjing Wang}
    \affil{Department of Philosophy, Peking University}
  \author{Yunsong Wang}
    \affil{ILLC, University of Amsterdam}
\maketitle

  \begin{abstract}
  This paper aims to give an epistemic interpretation to the \textit{tensor disjunction} in dependence logic, through a rather surprising connection to the so-called \textit{weak disjunction} in Medvedev's early work on intermediate logic under the Brouwer-Heyting-Kolmogorov (BHK)-interpretation. We expose this connection in the setting of inquisitive logic with tensor disjunction $\InqL^\tensor$ discussed by \cite{CiardelliB19}, but from an epistemic perspective. More specifically, we translate the propositional formulae of $\InqL^\tensor$ into modal formulae in a powerful epistemic language of \textit{knowing how} following the proposal by \cite{Wang2021,Wang321}. We give a complete axiomatization of the logic of our full language based on Fine's axiomatization of S5 modal logic with propositional quantifiers. Finally we generalize the tensor operator with parameters $k$ and $n$, which intuitively captures the epistemic situation that one knows $n$ potential answers to $n$ questions and is sure $k$ answers of them must be correct. The original tensor disjunction is the special case when $k=1$ and $n=2$. We show that the generalized tensor operators do not increase the expressive power of our logic, the inquisitive logic and propositional dependence logic, though most of these generalized tensors are not uniformly definable in these logics, except in our dynamic epistemic logic of knowing how. 
  \end{abstract}
  

\section{Introduction}

As a rapidly growing field of research, \textit{Dependence Logic} studies reasoning patterns expressed by logical languages extended with (in)dependence atoms (cf. e.g., \cite{sep-logic-dependence} for a survey). The intuitive meaning of the atomic formulae are best fleshed out formally by the \textit{team semantics} capturing the (in)dependence between variables. The truth conditions of the logical connectives and other logical constants are also given based on teams, where one usual guideline is to define them in such a way that the language enjoys the property of \textit{flatness}, i.e., for any formula $\alpha$ without the (in)dependence atoms, it is true w.r.t.\ a team $X$  ($X\vDash \alpha$) if it is true on each singleton team $\{s\}$ such that $s\in X$. 
To some extent, flatness preserves the intuition of the classical logical connectives on possible worlds. In particular, the semantics of the distinct tensor disjunction $\tensor$ in dependence logic can be viewed as a natural lifting of the world-based semantics for classical disjunction to teams, viewed as \textit{sets} of possible worlds: 
$$ X\vDash \alpha\tensor\beta  \text{ iff there are $U,V\subseteq X$ such that $X\subseteq U\cup V$,} \text{ $U\vDash \alpha$ and $V\vDash\beta$}$$

Note that a disjunction $\alpha\lor\beta$ is classically true on each world in a set $X$ of possible worlds if and only if there are two subsets jointly covering the whole space of possible worlds such that one subset satisfies $\alpha$ homogeneously and the other satisfies $\beta$ homogeneously. This lifting may also give the impression that $\tensor$ can be read more or less as a classical disjunction. 
However, it is not so straightforward. For example, the truth of the propositional dependence formula $\dep(p,q)\ \tensor \dep(p,q)$ over a team is not equivalent to  $\dep(p,q)$. According to the semantics of $\tensor$, $\dep(p,q)\ \tensor \dep(p,q)$ says there are two subteams jointly covering the whole team, and $q$ depends on $p$ in each team. However, it is not necessarily that $q$ depends on $p$ over the whole team.
A natural question arises: how to understand this $\tensor$ disjunction intuitively and precisely?\footnote{In \cite{DLbook}, it is suggested that the  (in)dependence formulae can be viewed as \textit{types} of  teams.}
Our work proposes a possible epistemic understanding of $\tensor$ (and its generalizations) from a Brouwer-Heyting-Kolmogorov (BHK)-like perspective to be explained below.

The initial idea is based on an unexpected connection between the tensor disjunction and the so-called \textit{weak disjunction} in Medvedev's early work \cite{Med66} on the \textit{problem semantics} of intuitionistic logic, following Kolmogorov's problem-solving interpretation \cite{Kol32}. This connection is best exposed in the setting of \textit{inquisitive logic} with tensor disjunction discussed in  \cite{CiardelliB19}, since inquisitive logic has intimate connections with both the propositional dependence logic \cite{YangV16} and Medvedev's logic \cite{Ciardelli2011}. More specifically, various versions of propositional dependence logic can be viewed as the  disguised inquisitive logic, e.g., the dependence atom $\dep (p, q)$ becomes $(p\lor\neg p)\to (q\lor \neg q)$ \cite{Yangphd14,YangV16,Ciardelli2018}. On the other hand, Medvedev's logic is the substitution-closed core of inquisitive logic $\InqL$ that also admits a BHK-like interpretation via resolutions\cite{Ciardelli09,Ciardelli2011}.
\footnote{In the recent literature, inquisitive logic is also viewed as an extension of classical logic \cite{Ciardelli2016}.} 
Another advantage of using inquisitive logic as the ``medium'' is that we can put classical, intuitionistic, and tensor disjunctions in the same picture to reveal their differences. The last missing piece for an intuitive reading of tensor is an epistemic interpretation that can incorporate the BHK-interpretation. Wang proposed to capture intuitionistic truth  using a modality $\Kh$ to express \textit{knowing how to prove\slash solve} \cite{Wang2021}, which reflects Heyting's often-overlooked early view of intuitionistic logic as an epistemic logic \cite{Heyting56}. This also led to an \textit{alternative} epistemic  interpretation of inquisitive logic  \cite{Wang321}, where a state supports a formula $\alpha$ is rendered as \textit{it is known how to resolve $\alpha$} (more colloquially, \textit{knowing how $\alpha$ is true}) when viewing the state as a set of possible worlds capturing the epistemic  uncertainty. This can give us alternative epistemic readings of formulas in inquisitive logic. For example, $\neg \alpha$ in inquisitive logic is first rendered as $\Kh\neg \alpha$, which can be reduced to $\K\neg \alpha$ (\textit{knowing that} $\alpha$ does not have any resolution), reflecting the negation $\neg$ as the bridge between the intuitionistic and classical worlds. As another example, the excluded middle $\alpha\lor\neg \alpha$ in inquisitive logic is first rendered as $\Kh(\alpha\lor \neg \alpha)$, which is equivalent to $\Kh\alpha\lor\Kh\neg\alpha$ in our system, and eventually  can be reduced to the intuitively invalid  $\Kh \alpha\lor\K\neg \alpha$. When $\alpha$ is the atomic proposition $p$, $p\lor\neg p$ in inquisitive logic is equivalent to the epistemic formula $\K p\lor \K \neg p$ in our setting (see \cite{Wang321}). 

Now we are ready to give the epistemic interpretation of the tensor disjunction. According to Medvedev's problem semantics  \cite{Med66}, the weak disjunction $\alpha \sqcup \beta$ captures a composite problem where the solutions are pairs of \textit{potential} solutions to the problems of $\alpha$ and $\beta$ respectively such that \textit{at least one} solution in each pair is correct.\footnote{See \cite{CZbook}, for the corresponding Kripke semantics of weak disjunction. } From the epistemic interpretation, Medvedev's truth concept for a formula $\gamma$ means it is known how to solve $\gamma$. In particular, a weak disjunction $\alpha \sqcup \beta$ is true w.r.t.\ a set of possible worlds (i.e., a state\slash team) iff there are two solutions $r_1$ and $r_2$ such that it is known that one of $r_1$ and $r_2$ is a correct solution to the corresponding problems. We will show such a truth condition amounts to exactly the team semantics for the tensor. 

We first summarize what we actually did in the paper before going into the technical details. After introducing the inquisitive logic with tensor $\InqL^\tensor$ in Section \ref{sec.def}, we first propose in Section \ref{sec.deldef} a dynamic epistemic language of know-that and know-how, with extra machinery of announcements and propositional quantifiers, interpreted over epistemic models that are essentially states\slash teams in the literature. The semantics of the know-how operator is given based on a BHK-like interpretation, with the intention to capture the alternative epistemic meaning of $\InqL^\tensor$ formulae, which is formally justified by showing in Section \ref{sec.exp} that the valid know-how formulae are exactly theorems in $\InqL^\tensor$. Moreover, we also show that the announcements and propositional quantifiers facilitates a recursive process to ``open up'' the know-how formulae, in particular to decode the $\tensor$, and eventually translate them into classical ones free of the know-how operator. Based on such a process we give a complete axiomatization of our full dynamic epistemic logic in Section \ref{sec.axiom}. Finally, in Section \ref{sec.gtensor} we generalize the idea of the tensor, from our epistemic interpretation, to obtain a spectrum of $n$-ary disjunctions $\tensor^k_n$, which captures the interesting epistemic situation of knowing $n$ potential answers to $n$ questions and being sure at least $k$ of them must be correct. We show that adding the generalized tensor operators does not increase the expressive power of our logic, the inquisitive logic and propositional dependence logic, though most of these generalized tensors are not uniformly definable in these logics, except in our epistemic language.


\section{Preliminaries: Inquisitive Logic with Tensor Disjunction}\label{sec.def}
    Following \cite{CiardelliB19}, we introduce the language and semantics of  \textit{Inquisitive Logic with Tensor Disjunction} ($\InqL^{\tensor}$). In contrast with \cite{CiardelliB19}, we use the symbol $\lor$ for the inquisitive disjunction and adopt the model-based semantics as in \cite{Ciardelli2016}. Throughout the paper, we fix a countable set $\Prop$ of proposition letters.
    \begin{definition}[Language $\PL$]\label{def.LPP}
    The language of propositional logic with tensor ($\PL$) is defined as follows:
    $$\alpha::= p\mid \bot\mid (\alpha\land\alpha)\mid (\alpha\lor\alpha)\mid (\alpha\to\alpha)\mid(\alpha\tensor\alpha)$$
    where $p\in \Prop$. We write {$\neg\alpha$} for $\alpha\to\bot$, $\top$ and $\alpha\lra\beta$ are defined as usual.
    \end{definition}
     
    \begin{definition}[Model and state]\label{def.model}
   A model is a pair $\M=\langle W, \V\rangle$ where: 
        \begin{itemize}
            \item $W$ is a non-empty set of possible worlds;\footnote{In \cite{Ciardelli2020}, the world set $W$ could be empty. The distinction is not technically significant.}
            \item $\V: \Prop\to \wp(W)$ is a valuation function.
        \end{itemize}
        A \emph{state} $s$ in $\M$ is a subset of $W$.
    \end{definition}
    We will also view these models as \textbf{\textit{epistemic models}} for our dynamic epistemic language to be introduced in Section \ref{sec.deldef}. 

    Given $\M$, we refer to its components by $W_\M$ and $\V_\M$. We write $w\in \M$ in case that $w\in W_{\M}$, and $\M'\subseteq \M$ in case that $W_\M'\subseteq W_\M$.  The semantics is defined through the \textit{support relation} between \textit{states} (in models) and formulae.

\begin{definition}[Support \cite{CiardelliB19}]
\label{support}
The \emph{support} relation $\Vdash$ is defined inductively:

\begin{center}
    \begin{tabular}{|lcl|}
        \hline
        $\M,s\Vdash p$ &  iff & $\forall w\in s, w\in V(p)$ \\
		$\M,s\Vdash\bot$ &  iff&  $s=\varnothing$\\
		 $\M,s\Vdash (\alpha\land\beta)$ &  iff& $\M,s\Vdash\alpha$ and $\M,s\Vdash\beta$\\	
		$\M,s\Vdash (\alpha\lor\beta)$ &  iff & $\M,s\Vdash\alpha$ or $\M,s\Vdash\beta$\\
		$\M,s\Vdash (\alpha\to\beta)$ & iff& $\forall t\subseteq s:$ if $\M,t\Vdash\alpha$ then $\M,t\Vdash\beta$\\
		$\M,s\Vdash (\alpha\tensor\beta)$ &  iff& there exist two sets $t\subseteq s$ and $t'\subseteq s$ such that \\
		&& $\M,t\Vdash\alpha$,  $\M,t'\Vdash\beta$, and $t\cup t'=s$.\\
		\hline
        \end{tabular}
        \end{center}


A formula $\alpha$ is \emph{valid} if it is supported by any state in any model. 
\end{definition}
Here are some simple properties. 
\begin{proposition}[Downward closeness]\label{prop.downward}
For any $\alpha\in \PL$, if $\M,s \Vdash \alpha$ then $\M, t\Vdash \alpha$ for any $t\subseteq s$. Moreover, $\M,\emptyset \Vdash \alpha$ for all $\alpha\in \PL.$
\end{proposition}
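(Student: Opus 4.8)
The plan is to prove both claims by a straightforward induction on the structure of $\alpha \in \PL$, following the clauses of the support relation in Definition \ref{support}. I would first establish the ``moreover'' part — that $\M, \emptyset \Vdash \alpha$ for all $\alpha$ — since it is needed as a base fact in the $\tensor$ case of the main induction (and it can itself be proved by the same induction, or simply noted as a parallel base case). For the atomic cases, $\M, \emptyset \Vdash p$ holds vacuously (the universal quantifier over $w \in \emptyset$ is trivially satisfied), and $\M, \emptyset \Vdash \bot$ holds by the clause $s = \emptyset$. For the inductive cases, $\emptyset$ is a subset of itself, so $\to$ and $\tensor$ clauses are satisfied taking all witness subteams to be $\emptyset$, and $\land$, $\lor$ follow from the induction hypothesis.

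For the downward closure statement, fix $t \subseteq s$ and assume $\M, s \Vdash \alpha$; I would proceed by induction on $\alpha$. The cases $p$ and $\bot$ are immediate: if every world in $s$ satisfies $p$ then so does every world in the smaller set $t$; and if $s = \emptyset$ then $t = \emptyset$. The conjunction and disjunction cases are immediate from the induction hypothesis applied to each conjunct/disjunct. For $\alpha \to \beta$: a subteam of $t$ is also a subteam of $s$, so the condition defining $\M, s \Vdash \alpha \to \beta$ (quantifying over all $u \subseteq s$) directly yields the condition for $\M, t \Vdash \alpha \to \beta$ (quantifying over all $u \subseteq t$); note this case does not even need the induction hypothesis. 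For $\alpha \tensor \beta$: from $\M, s \Vdash \alpha \tensor \beta$ we get $u, u' \subseteq s$ with $\M, u \Vdash \alpha$, $\M, u' \Vdash \beta$, and $u \cup u' = s$; then $u \cap t$ and $u' \cap t$ are subsets of $t$, their union is $(u \cup u') \cap t = s \cap t = t$, and by the induction hypothesis (since $u \cap t \subseteq u$ and $u' \cap t \subseteq u'$) we get $\M, u \cap t \Vdash \alpha$ and $\M, u' \cap t \Vdash \beta$, witnessing $\M, t \Vdash \alpha \tensor \beta$.

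I do not anticipate a genuine obstacle here; the only place requiring the slightest care is the $\tensor$ case, where one must intersect the witnessing subteams with $t$ rather than naively reuse $u, u'$ (which need not be contained in $t$), and then invoke the induction hypothesis on those intersections. Everything else is a mechanical unwinding of the definitions.
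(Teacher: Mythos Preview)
Your proposal is correct and is the standard induction-on-formulae argument for persistence (downward closure) and the empty-state property in team/support semantics; the $\tensor$ case is handled exactly right by intersecting the witnessing subteams with $t$. The paper itself does not give a proof of this proposition---it is stated as one of the ``simple properties'' immediately after Definition~\ref{support} and left to the reader---so your write-up simply supplies what the paper omits.
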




\begin{definition} \textit{Inquisitive Logic with Tensor Disjunction} ($\InqL^{\tensor}$) is the set of valid $\PL$ formulae under the support relation. 


\end{definition}

\section{A dynamic epistemic language}\label{sec.deldef}
    \begin{definition}[Language $\LKhP$]
    The language of \emph{Public Announcement Logic with Know-how Operator and Propositional Quantifier} is defined as:\footnote{$\Pi$ in the name $\LKhP$ denotes  propositional quantifiers as in the literature  \cite{1970Fine}.}
    $$\phi::=  p \mid \bot \mid (\phi\land\phi)\mid (\phi\lor\phi)\mid(\phi\tensor\phi)\mid (\phi\to\phi)\mid\K\phi\mid \Kh\alpha \mid \forall p\phi \mid [\phi]\phi $$
    \noindent     where  $p\in \Prop$ and $\alpha\in\PL$. We write {$\hK$} for $\neg\K\neg$, {$\exists p$} for $\neg\forall p\neg$ for all $p\in\Prop$ and {$\langle \phi\rangle$} for $\neg[\phi]\neg$ for all $\phi\in\LKhP$.
    \end{definition}
Intuitively, $\K \phi$ expresses ``the agent \textit{knows that} $\phi$'', $\Kh\alpha$ says that ``the agent \textit{knows how} to resolve $\alpha$'' or simply ``the agent \textit{knows how}  $\alpha$ is true'', 
        $\forall p\phi$ says that ``for any  proposition $p$, $\phi$ holds'' and $[\phi]\psi$ means that ``after announcing $\phi$, $\psi$ holds''. 
        Note that $\Kh$ only allows $\PL$-formulae $\alpha$ in its scope
        . 
        For instance we can express $\K\neg \Kh\alpha$ but not $\Kh\K\alpha$ in $\LKhP$. 
We write $\phi[\psi\slash\chi]$ for any formula obtained by replacing one or several occurrences of $\psi$ with $\chi$ in $\phi$. 
    
    
    \medskip

    We view the models in Definition \ref{def.model} as \textit{epistemic models} where the implicit epistemic relation is the total relation. The semantics of $\LKhP$ is given on such models, with the notions of \textit{resolution space} and \textit{resolution} as below.
    \begin{definition}[Resolution space]\label{def.rs}
    $\S$ is a function assigning each $\alpha\in\PL$ its (non-empty) set of potential resolutions:\\
    
    \begin{minipage}{0.5\textwidth}
        $\begin{aligned}
        \S(p)&=\{p\},\text{ for }p\in\Prop\\
        \S(\alpha\lor\beta)&=(\S(\alpha)\times\{0\})\cup(\S(\beta)\times\{1\})\\
        \S(\alpha\land\beta)&=\S(\alpha)\times\S(\beta)\\
        \end{aligned}$
    \end{minipage} \quad\quad
    \begin{minipage}{0.45\textwidth}
       $ \begin{aligned}
        \S(\bot)&=\{\bot\}\\
        \S(\alpha\to\beta)&=\S(\beta)^{\S(\alpha)}\\
        \S(\alpha\tensor\beta)&=\S(\alpha)\times\S(\beta)\end{aligned}$
        \end{minipage}

    \end{definition}
    Resolution spaces reflect  the BHK-interpretation, e.g., a possible resolution of an implication is a function transforming a resolution of the antecedent into a resolution of the consequent. Note that resolution spaces for atomic propositions are singletons, based on the assumption in inquisitive semantics that atomic propositions are statements  without inquisitiveness. The set of actual resolutions of each formula on each world in a given model is a (possibly empty) subset of the corresponding resolution space, as  defined below. 

    \begin{definition}[Resolution in model]
    \label{def.resolution}
    Given $\M$, $\R\!:\! W_\M\!\times\!\PL\!\!\to\!\bigcup_{\alpha\in\PL}\S(\alpha)$ gives the (actual) resolutions for each $\PL$-formula on each world: 
        $$\begin{aligned}
        \R(w,\bot)&=\varnothing\qquad \R(w,p)=\left\{\begin{array}{cl}
    \{p\} & \text{ if $w\in V_\M(p)$}\\
      \emptyset &    \text{ otherwise} 
        \end{array}\right.\\
        \R(w,\alpha\lor\beta)&=(\R(w,\alpha)\times\{0\})\cup(\R(w,\beta)\times\{1\})\\
        \R(w,\alpha\land\beta)&=\R(w,\alpha)\times\R(w,\beta)\\
        \R(w,\alpha\to\beta)&=\{f\in\S(\beta)^{\S(\alpha)}\mid f[\R(w,\alpha)]\subseteq\R(w,\beta)\}\\
        \R(w,\alpha\tensor\beta)&=(\R(w,\alpha)\times\S(\beta))\cup(\S(\alpha)\times\R(w,\beta))\end{aligned}$$
    \paragraph{Important notation} For $U\subseteq W_\M$, we write $\R(U, \alpha)$ for $\bigcap_{w\in U}\R(w, \alpha).$
    \end{definition}

    While $\S(\bot)=\{\bot\}$ is non-empty, it never has any actual resolution on specific worlds. For any $p\in\Prop$, $p$ has itself as its resolution iff it is true on $w$. For any implication $\alpha\to \beta\in\PL$, each of its resolution on $w$ is a function in $\S(\alpha\to \beta)$ which maps an actual resolution of $\alpha$ to an actual  resolution of $\beta$ on $w$. Following the idea of the weak disjunction introduced in \cite{Med66}, each resolution for $\alpha\tensor\beta\in\PL$ on $w$ is a pair of resolutions in $\S(\alpha\tensor\beta)$, such that \textit{at least one} in the pair is actual on $w$ for the corresponding formula.
        
    Let $\Prop(\alpha)$ be the set of propositional letters occurring in $\alpha$ and let $V^\alpha_\M(w)$ be the collection of $p\in \Prop(\alpha)$ that are true on $w$ in $\M$.  
    Proposition \ref{prop.negation} is a useful observation on the resolution of negations ($\neg \alpha:= \alpha\to \bot$). Proposition \ref{prop.rind} says that $\R(w, \alpha)$ only depends on the relevant valuation on $w$ itself. 

\begin{proposition}[\citep{Wang321}]\label{prop.negation} 
For any $\M, w$, any $\alpha$,  
$\R(w, \neg \alpha)$ is either $\emptyset$ or a fixed singleton set independent from $w$, and $\R(w, \neg \alpha)=\emptyset$ iff $R(w, \alpha)\not=\emptyset$.
\end{proposition}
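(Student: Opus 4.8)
The plan is to unfold the definitions directly, since $\neg\alpha$ is merely an abbreviation for $\alpha\to\bot$. First I would pin down the resolution space $\S(\neg\alpha)=\S(\alpha\to\bot)=\S(\bot)^{\S(\alpha)}$. Because $\S(\bot)=\{\bot\}$ is a singleton and $\S(\alpha)$ is non-empty (guaranteed by Definition \ref{def.rs}), the function space $\S(\bot)^{\S(\alpha)}$ contains exactly one element, namely the constant map $f_0$ sending every element of $\S(\alpha)$ to $\bot$. The point to stress is that $f_0$ depends only on the syntactic shape of $\alpha$ (through $\S(\alpha)$), not on $w$ or on $\M$; this already delivers the ``fixed singleton independent from $w$'' part of the claim.

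Next I would invoke the implication clause of Definition \ref{def.resolution}: $\R(w,\neg\alpha)=\{f\in\S(\bot)^{\S(\alpha)}\mid f[\R(w,\alpha)]\subseteq\R(w,\bot)\}$. Since the whole function space is just $\{f_0\}$, this set is either $\{f_0\}$ or $\emptyset$, according to whether $f_0$ satisfies the membership condition $f_0[\R(w,\alpha)]\subseteq\R(w,\bot)$. Recalling $\R(w,\bot)=\emptyset$, the condition reduces to $f_0[\R(w,\alpha)]=\emptyset$, and the image of a set under a function is empty precisely when the set is empty; hence the condition holds iff $\R(w,\alpha)=\emptyset$.

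Combining the two observations: $\R(w,\neg\alpha)=\{f_0\}$ when $\R(w,\alpha)=\emptyset$, and $\R(w,\neg\alpha)=\emptyset$ when $\R(w,\alpha)\neq\emptyset$. This is exactly the proposition, with $\{f_0\}$ being the promised fixed singleton and with $\R(w,\neg\alpha)=\emptyset$ iff $\R(w,\alpha)\neq\emptyset$. There is essentially no obstacle here; the only step demanding a moment's care is the remark that a function space into a singleton codomain is itself a singleton, which hinges on $\S(\alpha)$ being non-empty — and this is built into Definition \ref{def.rs}. (Even if $\S(\alpha)$ were empty one would still get a unique, empty, function, so the statement is robust, but we need not dwell on that case.)
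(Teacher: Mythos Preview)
Your argument is correct and is the natural direct unfolding of the definitions. The paper itself does not supply a proof of this proposition but merely cites \cite{Wang321}; your derivation---identifying $\S(\neg\alpha)=\S(\bot)^{\S(\alpha)}$ as the singleton $\{f_0\}$ and then reducing the membership condition $f_0[\R(w,\alpha)]\subseteq\R(w,\bot)=\emptyset$ to $\R(w,\alpha)=\emptyset$---is exactly the intended computation.
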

    
\begin{proposition}\label{prop.rind}
For any $\M,w$ and $\N,v$, for all $\alpha\in\PL$, if $V^\alpha_\M(w)=V^\alpha_\N(v)$, then $\R(w, \alpha)=\R(v,\alpha)$. 
\end{proposition}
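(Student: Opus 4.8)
The plan is a straightforward structural induction on $\alpha\in\PL$, with two preliminary observations that make the induction go through cleanly. First, the resolution space $\S(\beta)$ depends only on the formula $\beta$ and not on any model, so on both sides of the claimed equality the relevant spaces $\S(\alpha)$, $\S(\beta)$ are literally the same objects. Second, for each binary connective $\circ\in\{\land,\lor,\to,\tensor\}$ one has $\Prop(\alpha\circ\beta)=\Prop(\alpha)\cup\Prop(\beta)$, so the hypothesis $V^{\alpha\circ\beta}_\M(w)=V^{\alpha\circ\beta}_\N(v)$ automatically restricts to $V^{\alpha}_\M(w)=V^{\alpha}_\N(v)$ and $V^{\beta}_\M(w)=V^{\beta}_\N(v)$: the true proposition letters lying in the subset $\Prop(\alpha)$ (respectively $\Prop(\beta)$) are determined by those lying in $\Prop(\alpha\circ\beta)$. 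This is exactly what is needed to invoke the induction hypothesis on the immediate subformulas.

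For the base cases: if $\alpha=\bot$ then $\R(w,\bot)=\emptyset=\R(v,\bot)$ regardless of $w,v$. If $\alpha=p$ then $\Prop(p)=\{p\}$, so the hypothesis says precisely that $w\in V_\M(p)$ iff $v\in V_\N(p)$; by Definition~\ref{def.resolution}, $\R(w,p)$ equals $\{p\}$ in the first case and $\emptyset$ otherwise, and likewise for $\R(v,p)$, so the two sets coincide.

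For the inductive step, fix $\circ\in\{\land,\lor,\to,\tensor\}$ and, using the induction hypothesis together with the restriction observation above, assume $\R(w,\alpha)=\R(v,\alpha)$ and $\R(w,\beta)=\R(v,\beta)$. Each defining clause of Definition~\ref{def.resolution} for $\R(\cdot,\alpha\circ\beta)$ is built purely from $\R(\cdot,\alpha)$, $\R(\cdot,\beta)$, the model-independent sets $\S(\alpha)$, $\S(\beta)$, and set-theoretic operations: a cartesian product for $\land$, a tagged union for $\lor$, the comprehension $\{f\in\S(\beta)^{\S(\alpha)}\mid f[\R(\cdot,\alpha)]\subseteq\R(\cdot,\beta)\}$ for $\to$, and $(\R(\cdot,\alpha)\times\S(\beta))\cup(\S(\alpha)\times\R(\cdot,\beta))$ for $\tensor$. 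Substituting equals for equals then gives $\R(w,\alpha\circ\beta)=\R(v,\alpha\circ\beta)$, closing the induction.

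There is no real obstacle here: the statement is essentially a bookkeeping fact about the definition of $\R$. The only point deserving a moment's care is the restriction observation, which guarantees that the induction hypothesis applies to the subformulas; once that is in place, every connective case is immediate from the corresponding clause of Definition~\ref{def.resolution}.
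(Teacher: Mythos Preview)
Your proof is correct; the paper itself states Proposition~\ref{prop.rind} without proof, treating it as an immediate consequence of Definition~\ref{def.resolution}. Your structural induction, together with the restriction observation $V^{\alpha}_\M(w)=V^{\alpha\circ\beta}_\M(w)\cap\Prop(\alpha)$, is exactly the routine argument the paper is tacitly appealing to.
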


Now we are ready to define the satisfaction relation of $\LKhP$ on \textit{pointed models}, i.e, a model with a designated world, in contrast with the state-based support-semantics. Note that the connectives outside the scope of $\Kh$ are classical, in particular $\tensor$ just functions as a classical disjunction. $\K$ is the standard epistemic modality of know-that. The semantics for $\Kh\alpha$ is defined via resolutions and is intended to capture the know-how interpretation of  $\InqL^{\tensor}$. $\forall p$ is a propositional quantifier over the full power set of $W_\M$. The semantics of the dynamic operator $[\psi]$ is as in public announcement logic \cite{2007Plaza}. 
    \begin{definition}[Semantics]\label{def.semantics}
    For $\phi,\psi\in\LKhP$, $\alpha\in\PL$ and $\M,w$ where $\M=\langle W,\V\rangle$, the satisfaction relation is defined as below where $\bigcirc\in \{\lor, \tensor\}$:\\
        $$\begin{array}{|lcl|}
        \hline
        \M,w\nvDash\bot & &  \\
        \M,w\vDash p &\iff& w\in \V(p)\\
        \M,w\vDash (\phi\bigcirc\psi)&\iff& \M,w\vDash \phi \text{ or }\M,w\vDash \psi\\
        \M,w\vDash (\phi\land\psi)&\iff& \M,w\vDash \phi \text{ and }\M,w\vDash \psi\\
        \M,w\vDash (\phi\to\psi)&\iff& \M,w\vDash \phi \text{ implies } \M,w\vDash \psi\\
        \M,w\vDash \K\phi&\iff& \text{ for any } v\in \M,  \M,v\vDash\phi\\
        \M,w\vDash \Kh\alpha&\iff& \text{ there exists an } x\in \S(\alpha) \text{ s.t. for any } v\in \M, x\in\R(v,\alpha)\\
        \M,w\vDash \forall p\phi&\iff& \text{ for any } U\in \wp(W_{\M}), \M[p\mapsto U],w\vDash\phi\\
        \M,w\vDash [\psi]\phi&\iff& \M,w\vDash\psi \text{ implies }  \M|_{\llbracket\psi\rrbracket},w\vDash\phi\\
        \hline
        \end{array}$$
        where:  
        \begin{itemize}
            \item Given $U\in \wp(W_\M)$ and $p\in\Prop$, recall that $\M[p\mapsto U]=\langle W, V'\rangle$, where the assignment $V'$ assigns $U$ to $p$ and coincides with $V$ on all other atoms; and
            \item $\llbracket\psi\rrbracket=\{w\in W_\M\mid\M,w\vDash\psi\}$ and $\M|_{X}$ is the submodel of $\M$ by restricting to $\emptyset\not=X\subseteq W_\M$. Thus $\M|_{\llbracket\psi\rrbracket}$ is the submodel restricted to the worlds satisfying $\psi$ in $\M$. We also write $\M|_{\llbracket\psi \rrbracket}$ as $\M|_\psi$ for brevity. 
        \end{itemize}
    \end{definition}
    \noindent Validity and entailment are defined as usual. 
    
    In \cite{Wang321}, we have a dynamic operator $\Box$. $\Box\phi$ says that ``given any information updates $\phi$ holds''. This can be expressed by $\forall p[p]\phi$ given that $p$ is not free in $\phi$, which is used to handle the implication in the know-how scope.



We write $\M\vDash \phi$ iff $\M,w\vDash \phi$ for all $w\in W_\M$. Apparently, $\M,w\vDash\Kh\alpha$ iff $\M\vDash \Kh\alpha$ and $\M,w\vDash\K\phi$ iff $\M\vDash\phi$. As mentioned in $\cite{Wang321}$, the semantics of $\Kh$ is in the $\exists x \K$ form as in other know-wh logics \cite{Wang2018,Wang17d}. The truth condition of $\Kh$ below says that $\Kh\alpha$ holds on a (pointed) model as long as there is a uniform resolution for $\alpha$ on that model, where we define $\R(U,\alpha)$ as $\bigcap_{w\in U}\R(w, \alpha).$

$$\begin{array}{|lclcl|}
\hline
\M\vDash \Kh\alpha&\iff&\M,w\vDash \Kh\alpha&\iff& \R(W_\M, \alpha)\not=\emptyset\\
\hline
\end{array}$$


\noindent An alternative truth condition for $\PL$-formulae can be given via resolutions.
    \begin{proposition}
    \label{prop.nept}
    For any $\alpha\in\PL$ and $\M,w$, $\M,w\vDash \alpha \iff \R(w,\alpha)\neq\varnothing$.
    \end{proposition}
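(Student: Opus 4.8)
The plan is to prove the equivalence $\M,w\vDash \alpha \iff \R(w,\alpha)\neq\varnothing$ by induction on the structure of $\alpha\in\PL$. Note first that the satisfaction relation $\vDash$ treats all the connectives of $\PL$ classically (in particular $\tensor$ as a plain disjunction and $\neg\alpha = \alpha\to\bot$ as classical negation), so the right-hand side is really a claim about when the BHK-style resolution set $\R(w,\alpha)$ is inhabited. I would set up the base cases directly from Definition \ref{def.resolution}: for $\bot$ we have $\R(w,\bot)=\varnothing$ and $\M,w\nvDash\bot$; for $p\in\Prop$ we have $\R(w,p)=\{p\}\neq\varnothing$ exactly when $w\in\V(p)$, i.e.\ exactly when $\M,w\vDash p$.

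For the inductive step I would go through the five connectives. For conjunction, $\R(w,\alpha\land\beta)=\R(w,\alpha)\times\R(w,\beta)$ is nonempty iff both factors are nonempty, which by the induction hypothesis is iff $\M,w\vDash\alpha$ and $\M,w\vDash\beta$, i.e.\ iff $\M,w\vDash\alpha\land\beta$. For inquisitive disjunction, $\R(w,\alpha\lor\beta)=(\R(w,\alpha)\times\{0\})\cup(\R(w,\beta)\times\{1\})$ is nonempty iff $\R(w,\alpha)\neq\varnothing$ or $\R(w,\beta)\neq\varnothing$, which matches the disjunctive clause of $\vDash$ via the IH; the tensor case $\R(w,\alpha\tensor\beta)=(\R(w,\alpha)\times\S(\beta))\cup(\S(\alpha)\times\R(w,\beta))$ is handled identically, since $\S(\alpha)$ and $\S(\beta)$ are always nonempty, so inhabitation of the union is again equivalent to $\R(w,\alpha)\neq\varnothing$ or $\R(w,\beta)\neq\varnothing$. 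This is exactly the point where the "$\tensor$ behaves classically outside $\Kh$" phenomenon shows up, and it is clean precisely because resolution spaces are never empty.

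The implication case is the one requiring the most care. Here $\R(w,\alpha\to\beta)=\{f\in\S(\beta)^{\S(\alpha)}\mid f[\R(w,\alpha)]\subseteq\R(w,\beta)\}$, and I want this to be nonempty iff ($\M,w\vDash\alpha$ implies $\M,w\vDash\beta$), equivalently (by the IH) iff ($\R(w,\alpha)\neq\varnothing$ implies $\R(w,\beta)\neq\varnothing$). For the forward direction: if $\R(w,\alpha)\neq\varnothing$, pick $x\in\R(w,\alpha)$; if some $f$ witnesses membership in $\R(w,\alpha\to\beta)$, then $f(x)\in\R(w,\beta)$, so $\R(w,\beta)\neq\varnothing$. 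For the converse: if $\R(w,\alpha)=\varnothing$, then the condition $f[\R(w,\alpha)]\subseteq\R(w,\beta)$ is vacuous, so any $f\in\S(\beta)^{\S(\alpha)}$ works and this set is nonempty because $\S(\beta)$ is nonempty (so there is at least one total function $\S(\alpha)\to\S(\beta)$, even when $\S(\alpha)=\varnothing$ there is the empty function into a nonempty set — one should just note the function-space $\S(\beta)^{\S(\alpha)}$ is always nonempty when $\S(\beta)$ is); if $\R(w,\alpha)\neq\varnothing$ and $\R(w,\beta)\neq\varnothing$, fix some $y_0\in\R(w,\beta)$ and take the constant function $f\equiv y_0$, which clearly sends $\R(w,\alpha)$ into $\R(w,\beta)$.

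The only subtlety — and the thing I would double-check — is the edge behaviour of the function space $\S(\beta)^{\S(\alpha)}$: it is always nonempty because every $\S(\gamma)$ is stipulated non-empty in Definition \ref{def.rs}, and a nonempty codomain guarantees a constant function regardless of the domain. Once that is pinned down, the implication clause goes through and the induction is complete. I expect no real obstacle beyond this bookkeeping; the proposition is essentially the observation that the resolution-inhabitation predicate satisfies the classical truth-table clauses, which is a known feature of this BHK-style setup (cf.\ \cite{Wang321}).
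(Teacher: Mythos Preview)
Your proof is correct and follows essentially the same approach as the paper: structural induction on $\alpha$, with the paper only spelling out the $\to$ and $\tensor$ cases (deferring the others to \cite{Wang321}) while you cover all cases explicitly. Your treatment of the implication clause is in fact more careful than the paper's terse chain of equivalences, and your remark that $\S(\gamma)$ is always nonempty is exactly the point needed to make the $\tensor$ and $\to$ cases go through.
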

        \begin{proof}
        We prove by induction on the structure of $\alpha$. We only show the cases for $\to$ and $\tensor$. The other cases can be found in \cite{Wang321}. \\
        $$\begin{aligned}
        \M,w\vDash (\alpha\to\beta) &\iff \M,w\vDash \alpha \text{ implies } \M,w\vDash \beta 
        \\&\iff \R(w,\alpha)\neq\varnothing \text{ implies } \R(w,\beta)\neq\varnothing\\&\iff\{f\in\S(\beta)^{\S(\alpha)}\}\neq\varnothing\text{ and } f[\R(w,\alpha)]\subseteq\R(w,\beta) \text{ is possible}\\&\iff\R(w,\alpha\to\beta)\neq\varnothing
         \end{aligned}$$
         $$\begin{aligned}
        \M,w\vDash (\alpha\tensor\beta) &\iff \M,w\vDash \alpha \text{ or } \M,w\vDash \beta \iff \R(w,\alpha)\neq\varnothing \text{ or }\R(w,\beta)\neq\varnothing\\&\iff \text{ there exists an } x\in\R(w,\alpha) \text{ or there exists a } y\in\R(w,\beta)\\&\iff\text{ there exists a pair } \langle x,x'\rangle \text{ or }\langle y',y\rangle \text{ in } \R(w,\alpha\tensor\beta)\\&\qquad\quad%
        \text{ such that }y'\in\S(\alpha) \text{ and } x'\in\S(\beta) \\&\iff\R(w,\alpha\tensor\beta)\neq\varnothing
        \end{aligned}$$
        \vspace{-8pt}
        \end{proof}
        
       From Proposition \ref{prop.nept} we see that in propositional formulae, both $\lor$ and $\tensor$  collapse to the  classical disjunction outside the scope of $\Kh$. Yet $\tensor$ is weaker than $\lor$  in the way that we can construct a resolution of $\alpha\tensor\beta$ from that of $\alpha\lor\beta$. It also follows from Proposition \ref{prop.nept} that for any $\alpha\in\PL$, $\M,w\vDash \K\phi$ iff for each $v\in\M$, there is some resolution for $\alpha$ on $v$. In contrast, $\M,w\vDash\Kh\alpha$ iff there is a \textit{uniform} resolution for $\alpha$ on $\M$. The following is immediate. 
    \begin{proposition}\label{prop.Kh2K}
    $\Kh \alpha\to \K \alpha$ is valid for all $\alpha\in\PL$. 
    \end{proposition}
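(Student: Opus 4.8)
The statement to prove is Proposition~\ref{prop.Kh2K}: $\Kh\alpha \to \K\alpha$ is valid for all $\alpha \in \PL$.

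The plan is to argue semantically and directly from the truth conditions. Fix an arbitrary model $\M = \langle W, \V\rangle$ and world $w \in W$, and assume $\M, w \vDash \Kh\alpha$. By the truth condition for $\Kh$, this means there exists some $x \in \S(\alpha)$ such that for every $v \in \M$ we have $x \in \R(v, \alpha)$; equivalently $\R(W_\M, \alpha) = \bigcap_{v \in \M}\R(v,\alpha) \neq \emptyset$. I want to conclude $\M, w \vDash \K\alpha$, i.e.\ that $\M, v \vDash \alpha$ for every $v \in \M$.

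The key step is to invoke Proposition~\ref{prop.nept}, which gives the equivalence $\M, v \vDash \alpha \iff \R(v, \alpha) \neq \emptyset$ for every $\alpha \in \PL$ and every pointed model. So it suffices to check that $\R(v, \alpha) \neq \emptyset$ for each $v \in W_\M$. But this is immediate from the witness: since $x \in \R(v, \alpha)$ for all $v$, in particular $\R(v, \alpha)$ is nonempty for each $v$. Applying Proposition~\ref{prop.nept} again (now in the forward direction), $\M, v \vDash \alpha$ for all $v \in W_\M$, which is exactly $\M, w \vDash \K\alpha$ by the semantic clause for $\K$. Since $\M$ and $w$ were arbitrary, the implication $\Kh\alpha \to \K\alpha$ holds at every world of every model, hence is valid.

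There is essentially no obstacle here: the proposition is an immediate consequence of unpacking the two truth conditions and the characterisation in Proposition~\ref{prop.nept}, together with the trivial set-theoretic fact that a nonempty intersection forces each member of the family to be nonempty. If one wanted to avoid citing Proposition~\ref{prop.nept} and instead argue ``from scratch,'' the only mildly delicate point would be reproving that clause by induction on $\alpha$ (handling $\to$ and $\tensor$ as in its proof), but since Proposition~\ref{prop.nept} is already available this is unnecessary. The whole argument is a two-line chain: $\M,w \vDash \Kh\alpha$ gives a uniform resolution, which is in particular a resolution at each world, which by Proposition~\ref{prop.nept} means $\alpha$ holds at each world, which is $\M, w \vDash \K\alpha$.
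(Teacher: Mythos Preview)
Your proof is correct and matches the paper's approach exactly: the paper simply remarks that the proposition is immediate from Proposition~\ref{prop.nept}, since a uniform resolution witnessing $\Kh\alpha$ is in particular a resolution at each world, whence $\K\alpha$ holds. Your write-up just spells out this one-line observation in detail.
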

    
        Since each $p\in\Prop$ only has one possible resolution, when each point has a resolution for $p$, the model  has a uniform one. Thus we have Proposition \ref{prop.K2Kh} 
        
    \begin{proposition}\label{prop.K2Kh}
     $\Kh p\lra \K p$ is valid for all $p\in\Prop$.
    \end{proposition}

    While the deduction rule \textit{replacement of equals by equals} is not valid in general, for instance, although $(p\lor\neg p)\lra (p\to p)$ is valid,  $\Kh(p\lor\neg p)\lra\Kh(p\to p)$ is not. However, if we only allow substitution to happen outside the scope of $\Kh$ operators, the rule becomes valid. It is not hard to verify the following:
    \begin{proposition}\label{prop.rRE}
    For $\phi,\psi,\chi\in\LKhP$, if $\phi\lra\psi$ is valid, then  $\chi[\phi\slash\psi]\lra \chi$ is valid, given that the substitution does not happen in the scope of $\Kh$. 
    \end{proposition}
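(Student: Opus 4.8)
The plan is to prove Proposition~\ref{prop.rRE} by induction on the structure of $\chi$, tracking which occurrences of $\psi$ are being replaced and exploiting the hypothesis that $\phi\lra\psi$ is valid, i.e., that $\phi$ and $\psi$ are satisfied at exactly the same pointed models. First I would set up the induction so that the statement carried through is uniform over all pointed models $\M,w$: for every $\chi$ in which no selected occurrence of the subformula sits inside a $\Kh$, we have $\M,w\vDash\chi[\phi\slash\psi]$ iff $\M,w\vDash\chi$. The base cases $\chi=p$ and $\chi=\bot$ are trivial unless $\chi$ itself is the formula $\psi$ being replaced, in which case the claim is exactly the assumed validity of $\phi\lra\psi$; and more generally at each stage we may first dispatch the case where the whole of $\chi$ is one of the replaced occurrences of $\psi$ (again immediate from $\phi\lra\psi$ valid) and then assume the replacements happen strictly inside proper subformulae.

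For the inductive cases I would go through the connectives of $\LKhP$. For the Boolean and tensor cases $\chi=\chi_1\bigcirc\chi_2$ with $\bigcirc\in\{\land,\lor,\tensor,\to\}$, the semantics in Definition~\ref{def.semantics} is compositional and purely truth-functional at the point $w$, so $\M,w\vDash\chi[\phi\slash\psi]$ reduces via the IH on $\chi_1,\chi_2$ to $\M,w\vDash\chi$. For $\chi=\K\chi_1$, unfolding the semantics quantifies over all $v\in\M$, and applying the IH to $\chi_1$ at each such $v$ gives the result; note the substitution inside $\K$ is still "outside the scope of $\Kh$", so the side condition is preserved. For $\chi=\forall p\,\chi_1$, the semantics quantifies over all $U\in\wp(W_\M)$ and evaluates $\chi_1$ on $\M[p\mapsto U],w$; since $\phi\lra\psi$ is valid it holds in particular on every $\M[p\mapsto U]$, so the IH applies there and we conclude. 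For $\chi=[\chi_1]\chi_2$ I would use that the replacement can touch $\chi_1$, $\chi_2$, or both: first apply the IH to $\chi_1$ to see that $\llbracket\chi_1[\phi\slash\psi]\rrbracket=\llbracket\chi_1\rrbracket$ (hence the same relativised submodel $\M|_{\chi_1}$ in either case and the same precondition at $w$), then apply the IH to $\chi_2$ evaluated on that common submodel. The crucial point is that in none of these cases does the recursion force us to substitute under a $\Kh$: the grammar only allows $\PL$-formulae $\alpha$ in the scope of $\Kh$, and the side condition explicitly forbids touching those occurrences, so the case $\chi=\Kh\alpha$ is handled by simply not replacing anything inside, giving $\chi[\phi\slash\psi]=\chi$ syntactically.

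The main obstacle — really the only subtlety worth dwelling on — is the $\forall p$ case together with the interaction between the fixed proposition letters appearing in $\phi,\psi$ and the quantified letter $p$. Because we only assume $\phi\lra\psi$ is \emph{valid} (true at every pointed model) rather than merely true at $\M,w$, this is not actually a problem: validity is preserved under the valuation change $V\mapsto V'$, so $\phi\lra\psi$ remains valid on $\M[p\mapsto U]$ for every $U$, and the IH goes through uniformly without any freshness assumption on $p$. I would make this explicit in the write-up, since it is exactly the place where one might mistakenly expect to need a proviso. Everything else is a routine unfolding of Definition~\ref{def.semantics}, and the proof is short once the induction loading (quantifying over all pointed models, and pre-handling the "$\chi$ is itself a replaced occurrence" case at each node) is set up correctly.
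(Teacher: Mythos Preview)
Your induction on the structure of $\chi$ is correct and handles all cases cleanly; in particular, your observation that validity of $\phi\lra\psi$ carries over to every $\M[p\mapsto U]$ is exactly what makes the $\forall p$ case go through without a freshness proviso, and your treatment of $[\chi_1]\chi_2$ correctly exploits that the induction hypothesis is quantified over all pointed models (so it applies to the restricted model $\M|_{\chi_1}$ as well).

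As for comparison with the paper: there is essentially nothing to compare, since the paper does not give a proof of this proposition. It merely remarks that ``it is not hard to verify'' and states the result. Your write-up is therefore a full spelling-out of what the authors left to the reader, following the natural route (structural induction plus the key observation that the side condition rules out the only non-compositional clause, namely $\Kh$). There is no alternative argument hidden in the paper that you are missing.
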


\section{Expressivity}  \label{sec.exp}
    Let $\LPALPi$ be the $\Kh$-free fragment of $\LKhP$, $\LELPi$ be the $[\cdot]$-free fragment of $\LPALPi$ and $\LEL$ be the $\forall p$-free fragment of $\LELPi$. In Subsection \ref{sec.reduction}, we show $\Kh$ and $[\cdot]$ can be eliminated, thus making $\LKhP$,  $\LPALPi$ and $\LELPi$ equally expressive.
    In Subsection \ref{sec.=}, we show that the valid $\Kh$ formulae of $\LKhP$ corresponds to $\InqL^{\tensor}$ precisely.
\subsection{Reduction}\label{sec.reduction}
We introduce the reduction schemata to eliminate the $\Kh$ modality, which will also be used in the proof system to be introduced later. First, we have the following observation. 
    \begin{proposition}\label{prop.khtensor}
    For any $\alpha,\beta\in\PL$ where $p$ does not occur free, for any pointed model $\M,w$, $\M,w\vDash \exists p\K([p]\Kh\alpha\land[\neg p]\Kh\beta)$ iff there is a $U\subseteq W_\M,$ $(U\not=\emptyset$  implies $R(U,\alpha)\not=\emptyset)$  and $(\overline{U}\not=\emptyset$ implies $  R(\overline{U},\beta)\not=\emptyset)$. 
    \end{proposition}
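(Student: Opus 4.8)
The plan is to unpack the semantics of the compound modal formula step by step, translating each connective into a condition on subsets of $W_\M$, and then match the result with the stated set-theoretic condition. First I would fix $\M, w$ and recall that since $p$ does not occur free in $\alpha$ or $\beta$, changing the valuation of $p$ does not affect $\R(v,\alpha)$ or $\R(v,\beta)$ for any world $v$ (this is where Proposition \ref{prop.rind} is used, since $p \notin \Prop(\alpha) \cup \Prop(\beta)$). Then $\M,w \vDash \exists p\,\K([p]\Kh\alpha \land [\neg p]\Kh\beta)$ unfolds, by the semantics of $\exists p$, to: there is $U \in \wp(W_\M)$ such that $\M[p\mapsto U], w \vDash \K([p]\Kh\alpha \land [\neg p]\Kh\beta)$. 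Writing $\M' = \M[p\mapsto U]$, note $\llbracket p \rrbracket^{\M'} = U$ and $\llbracket \neg p \rrbracket^{\M'} = \overline{U}$.

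Next I would push the $\K$ through. Since $\K\chi$ holds at a pointed model iff $\M' \vDash \chi$, and the relevant $\chi$ here is $[p]\Kh\alpha \land [\neg p]\Kh\beta$, I need: for every $v \in W_{\M'}$, both $\M',v \vDash [p]\Kh\alpha$ and $\M',v \vDash [\neg p]\Kh\beta$. By the announcement semantics, $\M',v \vDash [p]\Kh\alpha$ means: if $v \in U$ then $\M'|_U, v \vDash \Kh\alpha$. Here I should be slightly careful about the edge case $U = \emptyset$: the restriction $\M'|_\emptyset$ is not defined (submodels are required non-empty), but in that case the antecedent $v \in U$ is never satisfied, so $\M',v \vDash [p]\Kh\alpha$ holds vacuously for all $v$, consistent with the ``$U \neq \emptyset$ implies $\ldots$'' phrasing. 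When $U \neq \emptyset$, using the fact (noted after Definition \ref{def.semantics}) that $\M'|_U \vDash \Kh\alpha$ iff $\R(W_{\M'|_U}, \alpha) = \R(U,\alpha) \neq \emptyset$, the conjunct over all $v$ collapses to the single condition $U \neq \emptyset \Rightarrow \R(U,\alpha) \neq \emptyset$; and since $p$ is not free in $\alpha$, $\R(U,\alpha)$ computed in $\M'$ equals $\R(U,\alpha)$ computed in $\M$. Symmetrically the second conjunct gives $\overline{U} \neq \emptyset \Rightarrow \R(\overline{U},\beta) \neq \emptyset$. Combining, the existence of the witness $U$ for the modal formula is equivalent to the existence of $U \subseteq W_\M$ with the two stated implications, which is exactly the claim.

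I do not expect a serious obstacle here; the argument is a routine semantic unwinding. The one point requiring genuine care — and the place where the ``$U \neq \emptyset$ implies'' and ``$\overline{U} \neq \emptyset$ implies'' hedges in the statement come from — is handling the degenerate cases $U = \emptyset$ and $U = W_\M$, where one of the announcements produces an empty (hence undefined) submodel and the corresponding $\Kh$-conjunct is satisfied vacuously rather than via a genuine uniform resolution. The other mild subtlety is being explicit that the non-freeness of $p$ lets us pass $\R(\cdot,\alpha)$ and $\R(\cdot,\beta)$ between $\M$ and $\M[p\mapsto U]$ unchanged, which I would cite from Proposition \ref{prop.rind}. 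Everything else is bookkeeping with the clauses of Definition \ref{def.semantics}.
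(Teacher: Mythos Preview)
Your proposal is correct and follows essentially the same approach as the paper: a step-by-step unwinding of the semantics of $\exists p$, $\K$, the announcements $[p]$ and $[\neg p]$, and $\Kh$, using that $p$ is not free in $\alpha,\beta$ to transfer resolutions between $\M$ and $\M[p\mapsto U]$. If anything, you are slightly more explicit than the paper about the degenerate cases $U=\emptyset$ and $U=W_\M$ and about invoking Proposition~\ref{prop.rind}; the paper handles these in a single clause (``$[\phi]\psi$ holds trivially if $\phi$ is false'') and leaves the non-freeness step implicit.
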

    \begin{proof} Given a $U\subseteq W_\M$, for any $w\in\M$, $\M[p\mapsto U],w\vDash p\iff w\in U$ ($\star$). For brevity, we write $\exists U$ for there exists $U\in W_\M$. Recall that $\M|_U$ denotes the submodel of $\M$ restricted to $U$, if $U$ is non-empty (otherwise undefined).
      
      $$\begin{array}{ll}
          &\M,w\vDash \exists p\K([p]\Kh\alpha\land[\neg p]\Kh\beta) \\
         \iff & \exists U,  \M[p\mapsto U],w\vDash \K([p]\Kh\alpha\land[\neg p]\Kh\beta)\\
         \iff & \exists U, \forall v\in \M, \M[p\mapsto U],v\vDash [p]\Kh\alpha\land[\neg p]\Kh\beta\\
         & (\text{by $(\star)$ and the fact that $[\phi]\psi$ holds trivially if $\phi$ is false, we have: })\\
         \iff &\exists U, \forall v\in U, \M[p\mapsto U],v\vDash [p]\Kh\alpha \text{ and }\forall v\not\in U, \M[p\mapsto U],v\vDash [\neg p]\Kh\beta\\
          \iff &\exists U, \forall v\in U, \M[p\mapsto U]|_p,v\vDash \Kh\alpha \text{ and }\forall v\not\in U, \M[p\mapsto U]|_{\neg p},v\vDash \Kh\beta \\
          & (\text{since $p$ does not occur free in $\alpha$ and $\beta$, we have:})\\
        \iff &\exists U, \forall v\in U, \M|_U,v\vDash \Kh\alpha \text{ and }\forall v\not\in U, \M|_{\overline{U}},v\vDash [\neg p]\Kh\beta\\
        \iff &\exists U, (\text{$U\not=\emptyset$  implies }R(U,\alpha)\not=\emptyset) \text{ and } (\text{$\overline{U}\not=\emptyset$ implies } R(\overline{U},\beta)\not=\emptyset).\\
        \end{array}$$
    \end{proof}
    
    Together with Proposition \ref{prop.K2Kh} and \ref{prop.rRE}, Proposition  \ref{prop.khreduction} helps us to first eliminate the $\Kh$ modality without changing the expressive power, i.e., each $\LKhP$-formula is equivalent to a $\LPALPi$-formula. 
    \begin{proposition}\label{prop.khreduction}
    The following formulae and schemata are valid: 
        $$\begin{aligned}&\KKhp: &&  \K p\to\Kh p  \qquad\qquad\qquad \qquad \Khbot:  \quad   \Kh\bot\lra \bot \\
    	         &\KhD:  && \Kh(\alpha\lor\beta)\leftrightarrow \Kh\alpha\lor\Kh\beta \qquad \KhC:  \quad  \Kh(\alpha\land\beta)\leftrightarrow \Kh\alpha\land\Kh\beta \\
                 &\KhI:  &&  \Kh(\alpha\to\beta)\leftrightarrow \K\forall p [p](\Kh\alpha\to\Kh\beta) \text{, where } p \text{ does not occur free in }\alpha\text{ or }\beta\\
                 &\Kht:  &&  \Kh(\alpha\tensor\beta)\lra\exists p\K([p]\Kh\alpha\land[\neg p]\Kh\beta),\text{ where } p \text{ does not occur free in }\alpha\text{ or }\beta \\
    	\end{aligned}$$
    \end{proposition}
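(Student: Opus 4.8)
The plan is to verify each of the five validities separately; each reduces to unpacking the semantics of $\Kh$ via the resolution space, and most of the work has already been isolated in the propositions preceding the statement. First, $\KKhp$ is essentially Proposition \ref{prop.K2Kh}: since $\S(p)=\{p\}$ is a singleton, if every world $v\in\M$ has $p\in\R(v,p)$ (which is what $\K p$ says, using Proposition \ref{prop.nept}), then the same single resolution $p$ witnesses $\R(W_\M,p)\neq\emptyset$, i.e.\ $\Kh p$. For $\Khbot$, note $\R(w,\bot)=\emptyset$ for every $w$, so $\R(W_\M,\bot)=\emptyset$, hence $\Kh\bot$ is never satisfied, matching $\bot$; the biconditional is then immediate.

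For $\KhC$ and $\KhD$ I would argue directly from the definition $\M\vDash\Kh\gamma\iff\R(W_\M,\gamma)\neq\emptyset$ together with the clauses $\R(w,\alpha\land\beta)=\R(w,\alpha)\times\R(w,\beta)$ and $\R(w,\alpha\lor\beta)=(\R(w,\alpha)\times\{0\})\cup(\R(w,\beta)\times\{1\})$. Taking intersections over $w\in W_\M$ commutes with products in the conjunction case — $\bigcap_w(\R(w,\alpha)\times\R(w,\beta))=(\bigcap_w\R(w,\alpha))\times(\bigcap_w\R(w,\beta))$ — so $\R(W_\M,\alpha\land\beta)\neq\emptyset$ iff both factors are non-empty, giving $\Kh\alpha\land\Kh\beta$. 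For disjunction one checks that $\bigcap_w\big((\R(w,\alpha)\times\{0\})\cup(\R(w,\beta)\times\{1\})\big)$ is non-empty iff either $\bigcap_w\R(w,\alpha)$ or $\bigcap_w\R(w,\beta)$ is non-empty: a uniform resolution of $\alpha\lor\beta$ must pick the same tag ($0$ or $1$) at every world — a pair with tag $0$ at $w$ and tag $1$ at $w'$ cannot be a common element — so it collapses to a uniform resolution of one disjunct. This "same-tag" observation is the only subtle point here, and it is genuinely forced by the disjoint-union structure of $\S(\alpha\lor\beta)$.

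The two interesting cases are $\KhI$ and $\Kht$. For $\Kht$, the heavy lifting is already done by Proposition \ref{prop.khtensor}: the right-hand side $\exists p\K([p]\Kh\alpha\land[\neg p]\Kh\beta)$ is shown there to be equivalent to the existence of some $U\subseteq W_\M$ with $(U\neq\emptyset\Rightarrow\R(U,\alpha)\neq\emptyset)$ and $(\overline U\neq\emptyset\Rightarrow\R(\overline U,\beta)\neq\emptyset)$. So it remains to show $\M\vDash\Kh(\alpha\tensor\beta)$ — i.e.\ $\R(W_\M,\alpha\tensor\beta)\neq\emptyset$ — is equivalent to that covering condition. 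Unfolding $\R(w,\alpha\tensor\beta)=(\R(w,\alpha)\times\S(\beta))\cup(\S(\alpha)\times\R(w,\beta))$, a pair $\langle x,y\rangle$ lies in $\R(W_\M,\alpha\tensor\beta)$ iff at every $w$ either $x\in\R(w,\alpha)$ or $y\in\R(w,\beta)$; setting $U=\{w\mid x\in\R(w,\alpha)\}$ gives $x\in\R(U,\alpha)$ and $y\in\R(\overline U,\beta)$, and conversely given such a $U$ (with the convention that a resolution of the empty intersection can be taken from the — non-empty — resolution space) one assembles the witnessing pair. The boundary behaviour when $U$ or $\overline U$ is empty is exactly why the statement of Proposition \ref{prop.khtensor} carries the guarded implications, so I would be careful to match those conventions, using that $\S(\alpha),\S(\beta)$ are always non-empty. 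For $\KhI$, the argument is parallel but one step further: $\M\vDash\Kh(\alpha\to\beta)$ iff there is a function $f\in\S(\beta)^{\S(\alpha)}$ with $f[\R(w,\alpha)]\subseteq\R(w,\beta)$ for all $w$; I would show this is equivalent to $\K\forall p[p](\Kh\alpha\to\Kh\beta)$ by noting that the right-hand side quantifies over all submodels $\M|_U$ (via the fresh $p$ and the announcement $[p]$, cf.\ the $\Box=\forall p[p]\cdot$ remark after Definition \ref{def.semantics}) and says: for every non-empty $U\subseteq W_\M$, if $\R(U,\alpha)\neq\emptyset$ then $\R(U,\beta)\neq\emptyset$. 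The translation from "a single $f$ handling all worlds simultaneously" to "all submodels behave coherently" is the crux: one direction applies $f$ to the uniform $\alpha$-resolution on $U$; the other direction builds $f$ pointwise on $\S(\alpha)$ by, for each $a\in\S(\alpha)$, looking at $U_a=\{w\mid a\in\R(w,\alpha)\}$, which satisfies $\R(U_a,\alpha)\neq\emptyset$, hence $\R(U_a,\beta)\neq\emptyset$, and picking $f(a)$ in that intersection (with an arbitrary choice from $\S(\beta)$ when $U_a=\emptyset$); one then checks $f[\R(w,\alpha)]\subseteq\R(w,\beta)$ holds at every $w$.

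I expect the main obstacle to be bookkeeping the empty-set / vacuity conventions uniformly across $\Kht$ and $\KhI$ — in particular making the "pointwise construction of the witnessing function/pair" match the guarded-implication form of Proposition \ref{prop.khtensor} and the announcement semantics (where $[p]\psi$ is vacuously true at worlds outside $U$, and $\M|_U$ is undefined for $U=\emptyset$) — rather than any conceptual difficulty; the underlying combinatorics is just the interaction of arbitrary intersections with products and disjoint unions of the resolution spaces.
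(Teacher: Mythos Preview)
Your proposal is correct and matches the paper's approach closely: the paper likewise defers $\KKhp$, $\Khbot$, $\KhC$, $\KhD$ to earlier work, handles $\Kht$ via Proposition~\ref{prop.khtensor} with the same $U=\{v\mid x\in\R(v,\alpha)\}$ construction and empty-set case split, and for $\KhI$ rewrites $\K\forall p[p]$ as $\K\Box$ and builds the witnessing $f$ exactly by your $U_a=\{w\mid a\in\R(w,\alpha)\}$ idea. The one point the paper makes explicit that you leave implicit is that $\S(\alpha)$ is finite, so the pointwise choice of $f(a)\in\R(U_a,\beta)$ does not require the axiom of choice.
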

        \begin{proof}
        We only show the cases for $\KhI$ and $\Kht$. The rest of the proof can be found in \cite{Wang321}. 
\begin{itemize}
    \item [$\KhI$:]
    Recall that $\M,w\vDash \Box\phi \iff \text{ for any } \M'\subseteq \M \text{ s.t. $w\in \M'$},  \M',w\vDash\phi$. We claim that $\Box\phi$ can be defined by $\forall p\phi$ where $p$ does not occur free in $\phi$. Then it suffices to show that
 $\Kh(\alpha\to\beta)\lra \K\Box(\Kh\alpha\to\Kh\beta)$. The following proof comes from \cite{Wang321}.
 
    $\Longrightarrow$: Suppose $\M,w\vDash \Kh(\alpha\to\beta)$, then there is some $f\in \R(\M,\alpha\to\beta).$
 Towards a contradiction, suppose $\M,w\not\vDash\K\Box(\Kh\alpha\to\Kh\beta)$.
 That is, there is an $v\in \M$ and an $\M',v\subseteq \M,v$ s.t. $\M',v\vDash\Kh\alpha$ but $\M',v\not\vDash\Kh\beta$. So there is an $x\in \R(\M',\alpha)$. Recall that $f$ is a function with domain $\S(\alpha)$, and $\S(\alpha) \supseteq\R(u,\alpha)$ for all $u\in\M'$, thus $x\in Dom(f)$. Moreover, since $f\in \R(\M,\alpha\to\beta)$, $f\in \R(\M',\alpha\to\beta).$ Let $y=f(x)$. By the definition of $\R(\M',\alpha\to\beta)$, $y\in\R(u,\beta)$ for each $u\in \M'$. Therefore $\M',v\vDash\Kh\beta$, a contradiction.

        $\Longleftarrow$: Suppose $\M,w\vDash\K\Box(\Kh\alpha\to\Kh\beta)$, then for all $v\in\M$, $\M,v\vDash\Box(\Kh\alpha\to\Kh\beta)$. By the semantics of $\Box$, for any $v\in \M$ and for any $\M',v\subseteq \M,v$, $\M',v\vDash\Kh\alpha\to\Kh\beta$ ($\ast$). Since $\S(\alpha)$ is finite and non-empty, thus we can assume $\S(\alpha)=\{x_0,x_1,\dots,x_n\}$ for some $n\in\mathbb{N}$. For $i\in\{0,\dots,n\}$, let $W_i=\{w\mid x_i\in\R(w,\alpha)\}$. If $W_i$ is not empty then let $\M_i$ be the submodel of $\M$ such that $W_{\M_i}=W_i$. Clearly $x_i\in\R(W_i,\alpha)$, therefore for any $u\in \M_i$, $\M_i,u\vDash\Kh\alpha$. By ($\ast$) we have $\M_i,u\vDash\Kh\beta$ thus there is a $y_i\in\R(W_i,\beta)$. Now fix a $y\in \S(\beta)\not=\emptyset$, let $f=\{\langle x_i,y_i\rangle\mid i\in\{0,\dots,n\} \text{ and $W_i\not=\emptyset$}\}\cup \{\langle x_i,y\rangle\mid i\in\{0,\dots,n\} \text{ and $W_i=\emptyset$}\}$. Clearly $f\in\S(\alpha)^{\S(\beta)}$. Now for any $v\in \M$ and $i\in\{0,\dots,n\}$, if $x_i\in\R(w,\alpha)$ \text{ then } $v\in W_i$ by the definition of $W_i$, thus  $y_i\in\R(v,\beta)$ by the construction of $f$. Therefore $f[\R(v,\alpha)]\subseteq \R(v,\beta)$ for all $v\in \M$. It follows that $\M,v\vDash \Kh(\alpha\to\beta)$ for all $v\in \M$ including $w$. Note that the axiom of choice is not needed here.

 \item[$\Kht$:]        
        $\Longrightarrow$: Suppose $\M,w\vDash \Kh(\alpha\tensor\beta)$, then by the semantics, there is some $(x,y)\in \R(W_\M,\alpha\tensor\beta).$
        Let $U=\{v\in\M\mid x\in\R(v,\alpha)\}$. It is not hard to see $\overline{U}\subseteq \{v\in\M\mid y\in\R(v,\beta)\}$ by the definition of $\R(v,\alpha\tensor\beta)$. By Proposition \ref{prop.khtensor}, $\M,w\vDash \exists p\K([p]\Kh\alpha\land[\neg p]\Kh\beta).$  
        
        $\Longleftarrow$: Suppose $\M,w\vDash \exists p\K([p]\Kh\alpha\land[\neg p]\Kh\beta),$ by Proposition  \ref{prop.khtensor}, there is a $U$ satisfying the desired property. If $U\not=\emptyset$ and $\overline{U}\not=\emptyset$,  pick $(x, y)$ as the witness for $R(W_\M, \alpha\tensor\beta)$ such that $x\in R(U,\alpha)$ and $y\in R(\overline{U}, \beta)$. If $U=\emptyset$ then $\overline{U}\not=\emptyset$ since $W_\M$ is non-empty, and we pick $(x, y)$ such that $y\in R(\overline{U}, \beta)$ and $x\in S(\alpha)$. Similar for the case when $\overline{U}=\emptyset$. This suffices to show $\M,w\vDash \Kh(\alpha\tensor\beta).$
        \end{itemize}
    \end{proof}

        By Proposition \ref{prop.anreduction} we further eliminate the $[\cdot ]$ operator (without $\Kh$).\footnote{An alternative set of reduction formulae for the announcement operator is presented in Proposition 2.3 of \cite{2007Plaza} and Lemma 12 of \cite{16Hoek}.} 

    \begin{proposition}\label{prop.anreduction}
    The following formulae and schemata are valid: 
    $$\begin{aligned}  & \Anp   &&  {[ \chi]}p\lra(\chi\to p), \ p\in\Prop\cup\{\bot\}  \\
    			 & \Ancirc   &&  {[\chi]}(\phi\bigcirc\psi)\lra {[\chi]}\phi\bigcirc{[\chi]}\psi  ,   \bigcirc\in\{\land,\lor,\tensor,\to\}  \\
    			 & \AnK   &&  {[\chi]}\K\phi\lra (\chi\to\K({[\chi]}\phi))  \\
    	&\Anforall&& {[\chi]}\forall p\phi\lra\forall p{[\chi]}\phi, p \text{ is not in } \chi		 
    	\end{aligned}$$
    \end{proposition}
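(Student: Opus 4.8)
The plan is to verify each of the four reduction schemata in Proposition \ref{prop.anreduction} by unfolding the semantics of the announcement operator $[\chi]$ from Definition \ref{def.semantics}, i.e.\ using $\M,w\vDash[\chi]\phi$ iff $\M,w\vDash\chi$ implies $\M|_\chi,w\vDash\phi$. The overall structure is a case analysis, one clause per schema, and in each case I would reduce to a routine manipulation of the truth-conditions together with the key observation that $\M|_\chi$ only affects the domain of worlds, not the valuation of any atom. I would also repeatedly use the trivial fact (already invoked in the proof of Proposition \ref{prop.khtensor}) that $[\chi]\psi$ holds vacuously at $w$ when $\M,w\nvDash\chi$, so all the biconditionals hold trivially on the $\nvDash\chi$ side and the real content is at worlds where $\chi$ holds.

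For $\Anp$: when $w\vDash\chi$, restricting to $\M|_\chi$ does not change whether $w\in V(p)$ (and $\bot$ is never satisfied either way), so $\M|_\chi,w\vDash p$ iff $\M,w\vDash p$, giving $[\chi]p\lra(\chi\to p)$; the $\bot$ case is immediate since both sides are equivalent to $\neg\chi$. For $\Ancirc$ with $\bigcirc\in\{\land,\lor,\tensor,\to\}$: since the classical connectives $\land,\lor,\to$ and the (classically behaving) $\tensor$ all commute with ``$\M,w\vDash\chi$ implies $(\,\cdot\,)$'' — using that $\chi\to(A\land B)$, $\chi\to(A\lor B)$, $\chi\to(A\to B)$ are propositionally equivalent to the corresponding combinations of $\chi\to A$ and $\chi\to B$, and noting $\M,w\vDash\chi$ is the \emph{same} condition on both sides — the equivalence follows by pushing $[\chi]$ inward. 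For $\AnK$: unfolding, $\M,w\vDash[\chi]\K\phi$ iff ($\M,w\vDash\chi$ implies for all $v\in\M|_\chi$, $\M|_\chi,v\vDash\phi$); on the other side, $\M,w\vDash\chi\to\K[\chi]\phi$ iff ($\M,w\vDash\chi$ implies for all $v\in\M$, $\M,v\vDash[\chi]\phi$), and the inner $[\chi]\phi$ at $v$ is vacuous unless $\M,v\vDash\chi$, i.e.\ unless $v\in\M|_\chi$, in which case it says $\M|_\chi,v\vDash\phi$; so both sides say the same thing. For $\Anforall$ with $p$ not in $\chi$: here the point is that changing the valuation of $p$ to $U$ does not change $\llbracket\chi\rrbracket$ (since $p\notin\chi$), so $(\M[p\mapsto U])|_\chi=(\M|_\chi)[p\mapsto U]$, and the quantifier over $U\in\wp(W_\M)=\wp(W_{\M|_\chi})$ commutes with the restriction; combined with the vacuity of $[\chi]$ when $\chi$ fails, $[\chi]\forall p\phi$ and $\forall p[\chi]\phi$ coincide.

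The only genuinely delicate points are (a) the side condition ``$p$ is not in $\chi$'' in $\Anforall$, which is exactly what licenses the identity $(\M[p\mapsto U])|_\chi=(\M|_\chi)[p\mapsto U]$ and must not be dropped — without it, updating $p$ could change which worlds survive the announcement; and (b) making sure the $\tensor$ case of $\Ancirc$ is handled correctly, since $\tensor$ is \emph{not} a classical disjunction in general, but Proposition \ref{prop.nept} and the remark following it guarantee that \emph{outside the scope of $\Kh$} it behaves exactly like $\lor$, so the same computation as for $\lor$ applies verbatim. I expect (a) to be the main obstacle to state cleanly, though it is not deep; everything else is a mechanical unfolding of Definition \ref{def.semantics} of the kind already carried out in the proof of Proposition \ref{prop.khtensor}.
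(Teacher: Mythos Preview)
Your approach is correct and matches the paper's, which likewise proceeds by direct semantic unfolding (the paper in fact only writes out the $\lor$ case of $\Ancirc$ and declares the rest trivial). One small slip to fix in your $\Anforall$ case: you write $\wp(W_\M)=\wp(W_{\M|_\chi})$, which is false in general; what makes the argument go through is rather that on the $\forall p[\chi]\phi$ side the quantifier ranges over $U\in\wp(W_\M)$ but only $U\cap W_{\M|_\chi}$ matters in the restricted model (so $(\M[p\mapsto U])|_\chi=(\M|_\chi)[p\mapsto U\cap W_{\M|_\chi}]$), and since $U\mapsto U\cap W_{\M|_\chi}$ is a surjection onto $\wp(W_{\M|_\chi})$ the two quantifiers sweep over exactly the same family of updated restricted models.
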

        \begin{proof}
        The only non-trivial case is $\Ancirc$ and we only show ${[\chi]}(\phi\lor\psi)\lra {[\chi]}\phi\lor{[\chi]}\psi$ as example.
        $\M,w\vDash[\chi](\phi\lor\psi)$ iff $\M,w\vDash\chi$ implies $\M|_{\chi},w\vDash \phi\lor\psi$ iff $\M,w\vDash\chi$ implies ($\M|_{\chi},w\vDash \phi$ or $\M|_{\chi},w\vDash \psi$) iff ($\M,w\vDash\chi$ implies $\M|_{\chi},w\vDash \phi$) or ($\M,w\vDash\chi$ implies $\M|_{\chi},w\vDash \psi$) iff $\M,w\vDash [\chi]\phi$ or $\M,w\vDash [\chi]\psi$) iff $\M,w\vDash{[\chi]}\phi\lor{[\chi]}\psi$.
        \end{proof}

    Without loss of generality, we can always rename the bound variable in case it occurs in $\chi$. Then for any $\Kh$-free formula $\phi$, by repeatedly applying Proposition \ref{prop.anreduction}, we can get rid of all $[\cdot]$ operators and find an equivalent $\LELPi$-formula for each $\LPALPi$-formula. We will give a formal presentation of this result in Theorem \ref{thm.express} as a natural consequence of Theorem \ref{thm.soundness} (Soundness).

\subsection{$\KhL=\InqL^{\tensor}$}\label{sec.=}
    
    Now we show that $\KhL=\{\alpha\in \PL \mid\ \vDash \Kh\alpha\}$ is exactly $\InqL^{\tensor}$.

    \begin{lemma}
    \label{lem.modelstate}
     For any $\alpha\in\PL$, $\M,w\vDash\Kh\alpha$ iff  $\M,W_\M\Vdash\alpha.$ As a consequence, for any non-empty state $s$ in $\M$, $\M,s\Vdash\alpha$ iff $\M|_s\vDash\Kh\alpha$. 
    \end{lemma}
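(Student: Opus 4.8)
The plan is to prove the statement by induction on the structure of $\alpha$, showing simultaneously (or rather, deriving it from) the equivalence $\M,w\vDash\Kh\alpha \iff \R(W_\M,\alpha)\neq\emptyset \iff \M,W_\M\Vdash\alpha$. The first biconditional is exactly the truth-condition reformulation recorded just before the statement, so the real content is $\R(W_\M,\alpha)\neq\emptyset \iff \M,W_\M\Vdash\alpha$. Actually it is cleaner to prove the more general claim: for every state $s$ in $\M$ (including $s=\emptyset$), $\M,s\Vdash\alpha \iff \R(s,\alpha)\neq\emptyset$, where $\R(s,\alpha)=\bigcap_{w\in s}\R(w,\alpha)$ and the empty intersection is understood as $\S(\alpha)$ (which is non-empty by Definition \ref{def.rs}, so $\R(\emptyset,\alpha)\neq\emptyset$ matches Proposition \ref{prop.downward}). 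The ``as a consequence'' clause then follows immediately: $\M,s\Vdash\alpha \iff \R(s,\alpha)\neq\emptyset \iff \R(W_{\M|_s},\alpha)\neq\emptyset \iff \M|_s\vDash\Kh\alpha$, using that $W_{\M|_s}=s$ and that $\R$ computed in $\M|_s$ agrees with $\R$ computed in $\M$ on worlds of $s$ (by Proposition \ref{prop.rind}, since the relevant valuations agree).

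First I would handle the base cases. For $p\in\Prop$: $\M,s\Vdash p$ iff every $w\in s$ is in $V(p)$ iff $\R(w,p)=\{p\}$ for every $w\in s$ iff $\bigcap_{w\in s}\R(w,p)=\{p\}\neq\emptyset$ (note if $s=\emptyset$ both sides hold, matching $\R(\emptyset,p)=\S(p)=\{p\}$). For $\bot$: $\M,s\Vdash\bot$ iff $s=\emptyset$ iff $\R(s,\bot)=\S(\bot)=\{\bot\}\neq\emptyset$ (if $s\neq\emptyset$ then $\R(w,\bot)=\emptyset$ for $w\in s$, so the intersection is empty). Then the inductive cases for $\land$, $\lor$, $\to$, $\tensor$. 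For $\land$: use $\R(s,\alpha\land\beta)=\bigcap_w(\R(w,\alpha)\times\R(w,\beta))=(\bigcap_w\R(w,\alpha))\times(\bigcap_w\R(w,\beta))=\R(s,\alpha)\times\R(s,\beta)$, which is non-empty iff both factors are, matching the support clause via the IH. For $\lor$: $\R(s,\alpha\lor\beta)=\bigcap_w\big((\R(w,\alpha)\times\{0\})\cup(\R(w,\beta)\times\{1\})\big)$; since the two components sit in disjoint ``slices'' (tagged by $0$ vs $1$), this intersection equals $(\bigcap_w\R(w,\alpha)\times\{0\})\cup(\bigcap_w\R(w,\beta)\times\{1\})$ — here I should be slightly careful: the intersection of unions of disjoint families distributes because an element tagged $0$ lies in the $w$-th set iff its untagged part is in $\R(w,\alpha)$. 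So it is non-empty iff $\R(s,\alpha)\neq\emptyset$ or $\R(s,\beta)\neq\emptyset$, matching $\M,s\Vdash\alpha$ or $\M,s\Vdash\beta$ by IH.

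The two cases requiring more care are $\to$ and $\tensor$, and I expect $\to$ to be the main obstacle. For $\to$: $\M,s\Vdash\alpha\to\beta$ iff for every $t\subseteq s$, $\M,t\Vdash\alpha$ implies $\M,t\Vdash\beta$, which by IH is: for every $t\subseteq s$, $\R(t,\alpha)\neq\emptyset$ implies $\R(t,\beta)\neq\emptyset$. I must show this is equivalent to $\R(s,\alpha\to\beta)=\{f\in\S(\beta)^{\S(\alpha)}\mid f[\R(w,\alpha)]\subseteq\R(w,\beta)\text{ for all }w\in s\}\neq\emptyset$. The $(\Leftarrow)$ direction: given such an $f$ and a $t\subseteq s$ with some $x\in\R(t,\alpha)=\bigcap_{w\in t}\R(w,\alpha)$, then $f(x)\in\R(w,\beta)$ for every $w\in t$, so $f(x)\in\R(t,\beta)$. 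The $(\Rightarrow)$ direction is the delicate construction, essentially mirroring the $\KhI$ case in Proposition \ref{prop.khreduction}: for each $x\in\S(\alpha)$ let $W_x=\{w\in s\mid x\in\R(w,\alpha)\}$; if $W_x\neq\emptyset$ then $\R(W_x,\alpha)\ni x\neq\emptyset$ so by hypothesis $\R(W_x,\beta)\neq\emptyset$, pick $y_x$ in it; if $W_x=\emptyset$ pick $y_x$ to be any fixed element of the non-empty $\S(\beta)$; set $f=\{\langle x,y_x\rangle : x\in\S(\alpha)\}$. Then for any $w\in s$ and any $x\in\R(w,\alpha)$ we have $w\in W_x$, hence $y_x=f(x)\in\R(w,\beta)$, so $f\in\R(s,\alpha\to\beta)$. (No choice needed, as $\S(\alpha)$ is finite — I'd note this, or at least that it is a harmless use of choice over a finite/well-ordered index.) For $\tensor$: $\M,s\Vdash\alpha\tensor\beta$ iff there are $t,t'\subseteq s$ with $t\cup t'=s$, $\M,t\Vdash\alpha$, $\M,t'\Vdash\beta$, i.e.\ (by IH) $\R(t,\alpha)\neq\emptyset$ and $\R(t',\beta)\neq\emptyset$; I must match this with $\R(s,\alpha\tensor\beta)=\bigcap_{w\in s}\big((\R(w,\alpha)\times\S(\beta))\cup(\S(\alpha)\times\R(w,\beta))\big)\neq\emptyset$. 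For $(\Leftarrow)$: a witness $\langle x,y\rangle$ in that intersection; set $t=\{w\in s\mid x\in\R(w,\alpha)\}$ and $t'=\{w\in s\mid y\in\R(w,\beta)\}$; each $w\in s$ lies in $t$ or $t'$ (by the union), so $t\cup t'=s$, and $x\in\R(t,\alpha)$, $y\in\R(t',\beta)$, all non-empty or handled via downward closure when one is empty using $\S$-membership. For $(\Rightarrow)$: given $t\cup t'=s$ with $x\in\R(t,\alpha)$, $y\in\R(t',\beta)$ (picking any $x\in\S(\alpha)$, $y\in\S(\beta)$ if the respective set is empty, legitimate since those sets are non-empty), the pair $\langle x,y\rangle$ lies in $(\R(w,\alpha)\times\S(\beta))\cup(\S(\alpha)\times\R(w,\beta))$ for every $w\in s$ because $w\in t$ or $w\in t'$. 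Throughout, the recurring subtlety is the bookkeeping of the empty-state / empty-resolution corner cases and the convention $\R(\emptyset,\alpha)=\S(\alpha)$, which I would state explicitly at the outset so every case goes through uniformly.
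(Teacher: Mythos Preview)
Your proposal is correct. The main difference from the paper's proof is the level at which the induction is phrased and the machinery invoked.

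The paper proves the weaker-looking statement $\M\vDash\Kh\alpha\iff\M,W_\M\Vdash\alpha$ by induction on $\alpha$, and for the inductive steps (in particular $\tensor$, the only case shown in the paper; the others are cited from \cite{Wang321}) it goes through the reduction axioms of Proposition~\ref{prop.khreduction}: it first rewrites $\Kh(\alpha\tensor\beta)$ as $\exists p\K([p]\Kh\alpha\land[\neg p]\Kh\beta)$, unfolds this via Proposition~\ref{prop.khtensor} into an existential over subsets $U\subseteq W_\M$, and then matches that with the support clause for $\tensor$. The ``consequence'' clause is obtained by applying the first part to $\M|_s$ and noting that support depends only on the state. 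You instead prove the uniform claim $\M,s\Vdash\alpha\iff\R(s,\alpha)\neq\emptyset$ for \emph{all} states $s$ directly at the level of resolution sets, never touching announcements or propositional quantifiers; your $\to$ and $\tensor$ steps essentially redo, inline and in resolution-set language, the constructions that the paper packaged earlier into $\KhI$ and $\Kht$. Your route is more self-contained and makes the empty-state convention $\R(\emptyset,\alpha)=\S(\alpha)$ do real work, while the paper's route is shorter on the page because it reuses Proposition~\ref{prop.khreduction}. Both are fine; yours has the minor advantage that the ``consequence'' clause is immediate rather than requiring the extra observation that support is insensitive to the ambient model.
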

        \begin{proof} Note that $\M,w\vDash\Kh\alpha$ iff $\M\vDash\Kh\alpha$ by the semantics, so we simply show $\M\vDash\Kh\alpha$ iff  $\M,W_\M\Vdash\alpha$ inductively on the structure of $\alpha$. We only prove the case for $\tensor$ and the rest are the same as in \cite{Wang321}. By Proposition \ref{prop.khreduction}, $\M\vDash \Kh(\alpha\tensor\beta)$ amounts to $\exists U,$ $(U\not=\emptyset$  implies $R(U,\alpha)\not=\emptyset)$  and $(\overline{U}\not=\emptyset$ implies $  R(\overline{U},\beta)\not=\emptyset)$. We show this is exactly $\M, W_\M\Vdash \alpha\tensor \beta.$ 
        
        $\Longrightarrow$: If both $U$ and $\overline{U}$ are  non-empty, then $\M\vDash \Kh(\alpha\tensor\beta)$ amounts to $\M|_U\vDash \Kh\alpha$ and  $\M|_{\overline{U}}\vDash \Kh\beta$. By IH, it is equivalent to $\M|_U, U\Vdash \alpha$ and  $\M|_{\overline{U}}, \overline{U}\Vdash \beta$, which implies $\M, W_\M\Vdash \alpha\tensor\beta$ since $U\cup \overline{U}=W_\M$. If one of $U$ and $\overline{U}$ is empty, suppose w.l.o.g. $U=\emptyset$, then we can also show $\M,\overline{U}\Vdash\beta$ (as before), and $\M,U\Vdash\alpha$, for the empty state support all formulae by Proposition \ref{prop.downward}. Thus $\M, W_\M\Vdash \alpha\tensor\beta$. 
        
        $\Longleftarrow$: Suppose $\M, W_\M\Vdash \alpha\tensor\beta$, then there are states $t$ and $t'$ such that $t\cup t'=W_\M$ and $\M,t\Vdash\alpha$ and $\M,t'\Vdash \beta$. Now at least one of $t$ and $t'$ is nonempty since $W_\M$ is non-empty. W.l.o.g., suppose $t\not=\emptyset$. Note that since $\overline{t}=(W_\M\setminus t) \subseteq t'$, then $\M,\overline{t}\Vdash \beta$ by Proposition \ref{prop.downward}. Now we take $U=t$, then by IH, $\M|_U\vDash \Kh\alpha$ and if $\overline{U}\not=\emptyset$ then $\M_{\overline{U}}\vDash\Kh \beta$. Therefore, $R(U,\alpha)\not=\emptyset$ and $(\overline{U}\not=\emptyset$ implies $  R(\overline{U},\beta \not=\emptyset)$. Thus, $\M, W_\M\Vdash\Kh(\alpha\tensor \beta)$ by Proposition \ref{prop.khtensor}. This concludes the first part of the proposition.
        
        For the consequence,  $\M|_s,w\vDash\Kh\alpha$ iff $\M|_s, s\Vdash \alpha$ iff $\M, s\Vdash\alpha$, and the last step is due to the fact the $\alpha$ only rely on the state in the support semantics.  
        \end{proof}
        

        
        
\begin{remark}\label{rem.equivsemantics}
Note that the proof for the $\tensor$ case above actually established the equivalence between our semantics based on the idea of weak disjunction by Medvedev and the team\slash support semantics in dependence\slash inquisitive logics. In our settings,  the formula $\dep (p,q)\ \tensor \dep(p,q)$ mentioned in the introduction says that there is a pair of dependence functions $(f_1,f_2)$ s.t. you know that one of these functions captures how $q$ depends on $p$.
\end{remark}

    Based on the lemma above, we can establish the relation between $\InqL^{\tensor}$ and $\KhL$, where $\Kh\Gamma=\{\Kh\alpha\mid \alpha\in \Gamma\}$.
    \begin{theorem}
    \label{ent}
     Given any $\{\alpha\}\cup \Gamma\subseteq \PL$, $\Gamma\Vdash \alpha$ iff $\Kh\Gamma\vDash \Kh \alpha$. As a consequence when $\Gamma=\emptyset$, $\InqL^{\tensor}=\KhL$.
    \end{theorem}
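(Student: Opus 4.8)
The plan is to reduce the entailment claim to the model-state correspondence established in Lemma \ref{lem.modelstate}, which already does the real work. The key observation is that $\Vdash$ (support) and $\vDash\Kh(\cdot)$ both collapse all model-specific information into a single "global" statement about a state: $\M,s\Vdash\alpha$ iff $\M|_s\vDash\Kh\alpha$ for non-empty $s$, and the empty state supports everything while $\Kh\alpha$ is checked over the (non-empty) domain of whatever epistemic model we are in. So the two directions of the iff are essentially a translation between "for all states $s$ in all models with property $P$" and "for all pointed epistemic models with property $P$", where $P$ encodes membership in $\Kh\Gamma$ or $\Gamma$.

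\medskip

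\noindent\textbf{Direction $\Rightarrow$.} Assume $\Gamma\Vdash\alpha$; I want $\Kh\Gamma\vDash\Kh\alpha$. Take any pointed model $\M,w$ with $\M,w\vDash\Kh\gamma$ for every $\gamma\in\Gamma$. By the semantics, $\M,w\vDash\Kh\gamma$ iff $\M\vDash\Kh\gamma$ iff (Lemma \ref{lem.modelstate}) $\M,W_\M\Vdash\gamma$. So $W_\M$ is a state supporting every $\gamma\in\Gamma$; since $W_\M$ is non-empty by Definition \ref{def.model}, the premise $\Gamma\Vdash\alpha$ gives $\M,W_\M\Vdash\alpha$, and Lemma \ref{lem.modelstate} again yields $\M,w\vDash\Kh\alpha$. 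The only subtlety is the role of the empty state in the definition of $\Gamma\Vdash\alpha$: I should state explicitly what $\Gamma\Vdash\alpha$ means (support of $\alpha$ at every state supporting all of $\Gamma$, in every model), and note by Proposition \ref{prop.downward} the empty state is never a counterexample, so restricting attention to non-empty states (in particular to $W_\M$) loses nothing.

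\medskip

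\noindent\textbf{Direction $\Leftarrow$.} Assume $\Kh\Gamma\vDash\Kh\alpha$; I want $\Gamma\Vdash\alpha$. Take any model $\M$ and state $s$ with $\M,s\Vdash\gamma$ for all $\gamma\in\Gamma$. If $s=\emptyset$ then $\M,s\Vdash\alpha$ by Proposition \ref{prop.downward} and we are done. If $s\neq\emptyset$, form the submodel $\M|_s$; by the "consequence" part of Lemma \ref{lem.modelstate}, $\M,s\Vdash\gamma$ iff $\M|_s\vDash\Kh\gamma$, so $\M|_s,w\vDash\Kh\gamma$ for every $\gamma\in\Gamma$ and any $w\in s$. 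Applying the hypothesis $\Kh\Gamma\vDash\Kh\alpha$ to the pointed model $\M|_s,w$ gives $\M|_s\vDash\Kh\alpha$, and Lemma \ref{lem.modelstate} converts this back to $\M,s\Vdash\alpha$. Finally, the case $\Gamma=\emptyset$ specializes both directions: $\vDash\Kh\alpha$ iff $\emptyset\Vdash\alpha$, i.e. $\alpha$ is supported by every state in every model, which is exactly $\alpha\in\InqL^{\tensor}$, so $\KhL=\InqL^{\tensor}$.

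\medskip

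\noindent I do not anticipate a genuine obstacle here: all the content is in Lemma \ref{lem.modelstate}, and this theorem is just the packaging. The one place to be careful is bookkeeping around empty states and around the fact that $\Kh\alpha$ and $\K\phi$ are model-global (so "pointed model" versus "model" is immaterial), and to make sure the submodel $\M|_s$ is well-defined, which it is precisely because we handle $s=\emptyset$ separately.
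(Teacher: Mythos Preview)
Your proposal is correct and follows essentially the same route as the paper's proof: both directions are reduced to Lemma~\ref{lem.modelstate}, passing between $\M,W_\M\Vdash\gamma$ and $\M\vDash\Kh\gamma$ in the forward direction and using the submodel $\M|_s$ in the backward direction. If anything, your treatment is slightly more careful, since you explicitly handle the case $s=\emptyset$ (where $\M|_s$ is undefined) via Proposition~\ref{prop.downward}, a point the paper's proof leaves implicit.
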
 
    \begin{proof}
    Suppose $\Gamma\Vdash \alpha$ and $\M,w\vDash\Kh\Gamma$.  Now we have $\M,W_\M\Vdash \Gamma$ by Lemma \ref{lem.modelstate} thus $\M,W_\M\Vdash\alpha$, therefore $\M,w\vDash \alpha.$ For the other way around, if $\Kh\Gamma\Vdash \Kh\alpha$ and $\M,s\Vdash \Gamma$,  then $\M|_s\vDash \Kh\Gamma$ by Lemma \ref{lem.modelstate}, thus $\M|_s\vDash \Kh\alpha$. By Lemma \ref{lem.modelstate} again, $\M,s\Vdash\alpha.$   
    \end{proof}

\section{Axiomatization of $\LKhP$}\label{sec.axiom}
We first introduce the proof system $\SKhPALPi$ as below.\\
  \begin{center}
	{System $\SKhPALPi$}\\
\begin{tabular}{ll}
	\begin{tabular}{ll}
		{\textbf{Axioms}}&\\
			\TAUT & \text{Propositional tautologies}\\
			\tensoror &$(\phi\tensor\psi)\lra(\phi\lor\psi)$\\
			\DISTK &$\K (\phi\to\psi)\to (\K\phi\to \K\psi)$\\
		    \Anp &${[ \chi]}p\lra(\chi\to p), \ p\in\Prop\cup\{\bot\}$\\
			\Ancirc &${[\chi]}(\phi\!\bigcirc\!\psi)\lra {[\chi]}\phi\!\bigcirc\!{[\chi]}\psi$ \\
			\AnK &${[\chi]}\K\phi\lra \chi\to\K[\chi]\phi$\\
		    \Anforall & ${[\chi]}\forall p\phi\lra\forall p{[\chi]}\phi$, $p$ is not in $\chi$\\ 
			\DISTA & $\forall p(\phi\to\psi)\to(\forall p\phi\to\forall p\psi)$\\
			\subforall & $\forall p\phi\to\phi[\psi\slash p]$, $\psi$ is free for $p$ in $\phi$\\%
		   	\SU & $\exists p(p\land\forall q(q\to\K(p\to q)))$\\
            \Barcan & $\forall p \K\phi\to\K\forall p\phi$\\
             $\KhK$ & $\Kh\alpha\to\K\alpha$\\
	        $\KKhp$& $\K p\to\Kh p$\\
	         $\Khbot$ & $\Kh\bot\lra \bot$\\
            $\KhD$ & $\Kh(\alpha\lor\beta)\leftrightarrow \Kh\alpha\lor\Kh\beta$\\
            $\KhC$ & $\Kh(\alpha\land\beta)\leftrightarrow \Kh\alpha\land\Kh\beta$\\
            $\KhI$ & $\Kh(\alpha\to\beta)\leftrightarrow \K\forall p [p](\Kh\alpha\to\Kh\beta)$\\
            $\Kht$ & $\Kh(\alpha\tensor\beta)\lra\exists p\K([p]\Kh\alpha\land[\neg p]\Kh\beta)$\\
	\end{tabular} &
	\begin{tabular}{ll}
	    \AxTK&$\K \phi\to \phi$ \\
		\AxTransK&$\K \phi\to\K\K \phi$\\
		\AxEucK&$\neg \K \phi\to\K\neg\K \phi$\\
		$\AxTransKh$ & $\Kh\alpha\to \K\Kh\alpha$\\
	    $\AxEucKh$& $\neg\Kh\alpha\to \K\neg\Kh\alpha$\\
	    \ \\
	\textbf{Rules}&\\
    	\MP & $\dfrac{\phi,\phi\to\psi}{\psi}$\\
    	\NECK& $\dfrac{\vdash\phi}{\vdash\K\phi}$ \\
    	\generalization &$\dfrac{\vdash\phi\to\psi}{\vdash \phi\to\forall p \psi}$ \\ 
    	&  $p$ not free in $\phi$\\
    	\rRE& $\dfrac{\vdash\phi\lra\psi}{\vdash \chi[\phi\slash\psi]\lra \chi}$, \\ 
    	& given that the \\
    	& substitution \\
    	& does not happen \\
    	& in the scope of $\Kh$	
    	\end{tabular}\\
\end{tabular}\\
	\end{center}
\noindent \textit{where $p\in\Prop$, $\alpha,\beta\in \PL$,  $\phi,\psi,\chi\in\LKhP$, $\bigcirc\!\in\!\{\land,\!\lor,\!\tensor,\!\to\}$; $p$ does not occur free in $\alpha$ and $\beta$ in $\KhI$ and $\Kht$.}

    Together with $\rRE$, $\tensoror$ states the fact that $\tensor$ behaves exactly like $\lor$ when it  occurs outside $\Kh$. $\SFive$ axiom schemeta\slash rules for $\K$ together with $\TAUT$, $\DISTA$, $\subforall$, $\SU$ and rule $\generalization$ form a complete axiomatization $\SFivePiplus$ of S5 logic with propositional quantifiers \citep{1970Fine}, where $\SU$ states the existence of atoms. Operators $\Anp$, $\Ancirc$,  $\AnK$  and $\Anforall$ are reduction axioms for $[\cdot]$ \cite{2007Plaza,16Hoek}.\footnote{The original form of $\Anforall$ in \cite{16Hoek} is ${[\chi]}\forall p\phi\lra (\chi\to\forall p{[\chi]}\phi)$ ($p$ is not in $\chi$).} $\KKhp$, $\Khbot$, $\KhD$, $\KhC$, $\KhI$ and $\Kht$ are the reduction axioms decoding the $\PL$ formulae, whose usages are shown in Lemma $\ref{lem.redkh}$. Barcan Formula $\Barcan$,  introspection schemata $\AxTransK$, $\AxTransKh$ and $\AxEucKh$ can be proved from the rest of the system. In particular, $\AxTransKh$ requires an inductive proof on the structure of $\alpha$. We include them for their intuitive meanings. 
    
    In order to show the power of $\SKhPALPi$, we give some examples of provable formulae in the system.
    \begin{proposition}
The following are provable in $\SKhPALPi$: 
\begin{center}
\begin{tabular}{ll}
$\Anid$&${[\chi]}\phi\lra(\chi\to{[\chi]}\phi)$\\
$\Anexists$ & ${[\chi]}\exists p\phi\lra\exists p{[\chi]}\phi$, $p$ is not in $\chi$\\
\end{tabular}
\end{center}
\end{proposition}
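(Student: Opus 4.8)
The plan is to derive each of the two provable schemata, $\Anid$ and $\Anexists$, from the axioms and rules of $\SKhPALPi$ by reduction-style reasoning; both are ``book-keeping'' facts about the interaction of the announcement operator $[\chi]$ with implication and with the defined quantifier $\exists p$.

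For $\Anid$, i.e.\ $[\chi]\phi\lra(\chi\to[\chi]\phi)$, I would argue by induction on the structure of $\phi$ (restricted to $\LKhP$), or more slickly by noting that every $\LKhP$-formula is provably equivalent, using the reduction axioms $\Anp$, $\Ancirc$, $\AnK$, $\Anforall$ together with $\rRE$, to a formula in which $[\chi]$ is applied only to atoms and $\bot$. At the atomic level, $\Anp$ gives $[\chi]p\lra(\chi\to p)$, and then $[\chi](\chi\to p)\lra(\chi\to[\chi]p)$ follows from $\Ancirc$ for $\to$ together with the already-available equivalence $[\chi]\chi\lra(\chi\to\chi)$, hence $[\chi]\chi$ is provably $\top$ under the hypothesis $\chi$. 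The cleanest route is: show first by an easy induction that $\vdash[\chi][\chi]\phi\lra[\chi]\phi$ (announcement is idempotent, which follows because $\M|_\chi|_\chi=\M|_\chi$; syntactically it is obtained by pushing both $[\chi]$'s inward via the reduction axioms until they hit atoms, where $\Anp$ twice collapses them using $\vdash\chi\to([\chi]\chi)$); then use the trivial direction $\vdash[\chi]\phi\to(\chi\to[\chi]\phi)$ (propositional) and for the converse, from $\chi\to[\chi]\phi$ infer $[\chi](\chi\to[\chi]\phi)$, distribute $[\chi]$ over $\to$ by $\Ancirc$, use $[\chi]\chi$ provable under announcement of $\chi$, and $[\chi][\chi]\phi\lra[\chi]\phi$, to land back at $[\chi]\phi$. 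Each of these manipulations is licensed by $\rRE$ since no substitution occurs in the scope of $\Kh$.

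For $\Anexists$, i.e.\ $[\chi]\exists p\phi\lra\exists p[\chi]\phi$ when $p$ is not in $\chi$, I would simply unfold the abbreviation $\exists p\psi:=\neg\forall p\neg\psi$ and compute: $[\chi]\neg\forall p\neg\phi$. Using $\Ancirc$ for $\to$ (recall $\neg\psi=\psi\to\bot$) we get $[\chi]\neg\forall p\neg\phi\lra([\chi]\forall p\neg\phi\to[\chi]\bot)$; now $[\chi]\bot\lra(\chi\to\bot)=\neg\chi$ by $\Anp$, and $[\chi]\forall p\neg\phi\lra\forall p[\chi]\neg\phi$ by $\Anforall$ (using that $p$ is not in $\chi$), and again $[\chi]\neg\phi\lra(\chi\to[\chi]\bot)$ wait --- more directly $[\chi]\neg\phi\lra([\chi]\phi\to\neg\chi)$. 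So $[\chi]\exists p\phi$ reduces to $\forall p([\chi]\phi\to\neg\chi)\to\neg\chi$, which by propositional reasoning with quantifiers (and $p$ not free in $\chi$, so $\neg\chi$ can be pulled out of $\forall p$) is equivalent to $\neg\forall p\neg[\chi]\phi$, i.e.\ $\exists p[\chi]\phi$. All steps are instances of the listed reduction axioms plus $\SFivePiplus$-reasoning about quantifiers and $\rRE$; I would present this as a short chain of provable equivalences rather than spelling out the propositional lemmas.

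The main obstacle I anticipate is the converse direction of $\Anid$: getting from $\chi\to[\chi]\phi$ back to $[\chi]\phi$ really does use idempotence of announcement, and one must be careful that the syntactic proof of $\vdash[\chi][\chi]\phi\lra[\chi]\phi$ goes through by induction without circularity (the inductive cases for $\lor,\land,\tensor,\to,\forall p$ are routine via $\Ancirc$ and $\Anforall$; the case $\K\psi$ uses $\AnK$ and $\NECK$; the atomic and $\bot$ cases use $\Anp$ and the fact that $\chi\to[\chi]\chi$ is provable). Once idempotence is in hand, $\Anid$ and $\Anexists$ are both short syntactic derivations, so I would state idempotence as an auxiliary claim, prove it by induction, and then deduce the two schemata.
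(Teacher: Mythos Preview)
Your route for $\Anexists$ is essentially the paper's: unfold $\exists p=\neg\forall p\neg$, push $[\chi]$ through via $\Ancirc$, $\Anp$, $\Anforall$, and finish with quantifier reasoning. One point you gloss over: the last step (from $\forall p([\chi]\phi\to\neg\chi)\to\neg\chi$ to $\neg\forall p\neg[\chi]\phi$) is not pure propositional/quantifier manipulation---it needs $\neg\chi\to[\chi]\phi$, which is exactly the nontrivial half of $\Anid$. The paper proves $\Anid$ first and uses it explicitly at this point.

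Your proof of $\Anid$, however, has a genuine error. The ``cleanest route'' rests on idempotence $\vdash[\chi][\chi]\phi\lra[\chi]\phi$, which you justify semantically by $\M|_\chi|_\chi=\M|_\chi$ and syntactically by $\vdash\chi\to[\chi]\chi$. All three claims are false in public announcement logic. Take the Moore sentence $\chi=p\land\neg\K p$ on a two-world model $\{w,v\}$ with $p$ true only at $w$: then $\M,w\vDash\chi$, but $\M|_\chi=\{w\}$, where $\K p$ holds, so $\M|_\chi,w\not\vDash\chi$. Hence $\M,w\not\vDash[\chi]\chi$; and with $\phi=\bot$ one gets $\M,w\vDash[\chi][\chi]\bot$ while $\M,w\not\vDash[\chi]\bot$. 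By soundness, none of $\chi\to[\chi]\chi$, $[\chi]\chi$, or idempotence is provable, and your derivation of the converse direction of $\Anid$ collapses. (The related claim $[\chi]\chi\lra(\chi\to\chi)$ in your first sketch fails for the same reason.)

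The paper proceeds instead by a direct structural induction on $\phi$, after first eliminating $\Kh$ (there is no reduction axiom of the shape $[\chi]\Kh\alpha\lra\cdots$, so that case must be removed in advance). The key observation is that each reduction axiom already produces an outer guard $\chi\to\cdots$: $\Anp$ yields $\chi\to p$, $\AnK$ yields $\chi\to\K[\chi]\psi$, and for $\bigcirc$ and $\forall p$ the induction hypothesis supplies the guard on the subformulae. One then only needs the tautology $(\chi\to X)\lra(\chi\to(\chi\to X))$, together with the fact that $\chi\to\cdot$ distributes over $\bigcirc$ and over $\forall p$ (the latter since $p$ can be chosen fresh for $\chi$), to rewrite back to $\chi\to[\chi]\phi$. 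No facts about $[\chi]\chi$ or iterated announcements are used.
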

\counterwithout{equation}{section}
\setcounter{equation}{0}
\begin{proof}
    For $\Anid$:
    Following Lemma $\ref{lem.redkh}$, We first change each $\LKhP$-formula $\phi$ into the $\LPALPi$-formula $\phi'$ such that $\phi'$ is provably equivalent to $\phi$. With Rule $\rRE$, we only need to construct the proof of ${[\chi]}\phi'\lra(\chi\to{[\chi]}\phi')$.
    
    We prove by induction on $\phi'$ to show that there is always a proof for $\Anid$ in $\SKhPALPi$.
    
    \begin{itemize}
        \item  If $\phi'\in\Prop\cup\{\bot\}$, then we construct the following proof.
            \begin{align}
                &\vdash{[\chi]}\phi'\lra(\chi\to\phi') &&\Anp\label{001}\\
                &\vdash(\chi\to\phi')\lra(\chi\to(\chi\to\phi')) &&\TAUT\label{002}\\
                &\vdash{[\chi]}\phi'\lra(\chi\to{[\chi]}\phi') &&(\ref{001})(\ref{002})\rRE
            \end{align} 
        \item If $\phi'$ is $\phi_1\bigcirc\phi_2$, $\bigcirc\in\{\land,\lor,\tensor,\to\}$, we construct the following proof.
\setcounter{equation}{0}
            \begin{align}
                &\vdash{[\chi]}(\phi_1\bigcirc\phi_2)\lra {[\chi]}\phi_1\bigcirc{[\chi]}\phi_2
                &&\Ancirc\label{011}\\
            	&\vdash{[\chi]}(\phi_1\bigcirc\phi_2)\lra (\chi\to{[\chi]}\phi_1)\bigcirc(\chi\to{[\chi]}\phi_2) &&(\ref{011})\rRE,\text{IH}\label{012}\\
            	&\vdash{[\chi]}(\phi_1\bigcirc\phi_2)\lra \chi\to({[\chi]}\phi_1\bigcirc{[\chi]}\phi_2)
            	&&(\ref{012})\TAUT\label{013}\\
            	&\vdash{[\chi]}(\phi_1\bigcirc\phi_2)\lra \chi\to{[\chi]}(\phi_1\bigcirc\phi_2)
            	&&(\ref{013})(\ref{011})\rRE
            \end{align} 
        \item If $\phi'$ is $\K\psi$, we construct the following proof.
\setcounter{equation}{0}
            \begin{align}
                &\vdash{[\chi]}\K\psi\lra (\chi\to\K{[\chi]}\psi)
                &&\AnK\label{021}\\
                &\vdash(\chi\to\K{[\chi]}\psi)\lra (\chi\to(\chi\to\K{[\chi]}\psi)) &&\TAUT\label{022}\\
                &\vdash{[\chi]}\K\psi\lra (\chi\to(\chi\to\K{[\chi]}\psi))
                &&(\ref{021})(\ref{022})\rRE\label{023}\\
                &\vdash{[\chi]}\K\psi\lra (\chi\to{[\chi]}\K\psi)
            	&&(\ref{023})\rRE
            \end{align} 
        \item If $\phi'$ is $\forall p\psi$,  construct the following proof. Let $q\in\Prop$ be the first propositional variable that is not in $[\chi]\phi$.
\setcounter{equation}{0}
            \begin{align}
                &\vdash\forall p\psi\lra\forall q\psi[q\slash p]
                &&\subforall,\generalization\label{031}\\
                &\vdash{[\chi]}\forall q\psi\lra \forall q{[\chi]}\psi
                &&\Anforall\label{032}\\
                &\vdash{[\chi]}\forall q\psi\lra \forall q(\chi\to{[\chi]}\psi) &&(\ref{032})\rRE,\text{IH}\label{033}\\
                &\vdash{[\chi]}\forall q\psi\lra (\forall q\chi\to\forall q{[\chi]}\psi) &&(\ref{033})\TAUT,\DISTA,\subforall,\generalization\label{034}\\
                &\vdash\chi\lra\forall q\chi\  (q\text{ is not in } \chi)
    	        &&\subforall,\generalization\label{035}\\
                &\vdash{[\chi]}\forall q\psi\lra (\chi\to\forall q{[\chi]}\psi)
                &&(\ref{034})(\ref{035})\rRE\label{036}\\
                &\vdash{[\chi]}\forall q\psi\lra (\chi\to{[\chi]}\forall q\psi)
                &&(\ref{036})\rRE\Anforall\label{037}\\
                &\vdash{[\chi]}\forall p\psi\lra (\chi\to{[\chi]}\forall p\psi)
                &&(\ref{037})(\ref{031})\rRE
            \end{align} 
    \end{itemize}
    
    For $\Anexists$: By definition of $\exists p$ and $\neg$ we only have to prove  ${[\chi]}(\forall p(\phi\to\bot)\to\bot)\lra\forall p({[\chi]}\phi\to\bot)\to\bot$, where $p$ is not in $\chi$.
\setcounter{equation}{0}    
    \begin{align}
    	&\vdash{[\chi]}(\forall p(\phi\to\bot)\to\bot)\lra{[\chi]}\forall p(\phi\to\bot)\to{[\chi]}\bot &&\Ancirc\label{106}\\
    	&\vdash{[\chi]}(\forall p(\phi\!\to\!\bot)\!\to\!\bot)\lra\forall p{[\chi]}(\phi\!\to\!\bot)\!\to\!(\chi\!\to\!\bot) &&(\ref{106})\rRE,\Anp,\Anforall\label{107}\\
    	&\vdash{[\chi]}(\forall p(\phi\to\bot)\to\bot)\lra(\forall p{[\chi]}(\phi\to\bot)\land\chi)\to\bot &&(\ref{107})\TAUT\label{108}\\
    	&\vdash\chi\lra\forall p\chi, 
    	&&\subforall,\generalization\label{119}\\
    	&\vdash{[\chi]}(\forall p(\phi\to\bot)\to\bot)\lra(\forall p{[\chi]}(\phi\to\bot)\land\forall p\chi)\to\bot &&(\ref{108})(\ref{119})\rRE\label{109}\\
    	&\vdash{[\chi]}(\forall p(\phi\to\bot)\to\bot)\lra\forall p{[\chi]}((\phi\to\bot)\land\chi)\to\bot &&(\ref{109})\TAUT,\DISTA,\subforall,\generalization\label{110}\\
    	&\vdash{[\chi]}(\forall p(\phi\to\bot)\to\bot)\lra\forall p(({[\chi]}\phi\to{[\chi]}\bot)\land\chi)\to\bot
    	&&(\ref{110})\rRE,\Ancirc\label{111}\\
    	&\vdash{[\chi]}(\forall p(\phi\to\bot)\to\bot)\lra\forall p(({[\chi]}\phi\!\to]!(\chi\!\to\!\bot))\!\land\!\chi)\!\to\!\bot
    	&&(\ref{111})\rRE,\Anp\label{112}\\
    	&\vdash{[\chi]}(\forall p(\phi\to\bot)\to\bot)\lra\forall p(({[\chi]}\phi\to\bot)\land\chi)\to\bot
    	&&(\ref{112})\TAUT\label{113}\\
    	&\vdash{[\chi]}\phi\to\bot\lra({[\chi]}\phi\to\bot)\land\chi &&\Anid,\TAUT\label{114}\\
    	&\vdash{[\chi]}(\forall p(\phi\to\bot)\to\bot)\lra\forall p({[\chi]}\phi\to\bot)\to\bot &&(\ref{113})(\ref{114})\rRE
    \end{align}
\end{proof}

\subsection{Provable equivalence}


    In Section \ref{sec.reduction}, we showed $\LKhP$ is expressively equivalent to $\LELPi$. Now we show the same result by referring to the soundness of $\SKhPALPi$ (Theorem \ref{thm.soundness}) and that each $\LKhP$-formula $\varphi$ is \textit{provably equivalent} to a $\LELPi$-fromula $\varphi'$ (Lemma \ref{lem.express}). Meanwhile we provide a  translation from $\varphi$ to  $\varphi'$.

    \begin{theorem}[Soundness]\label{thm.soundness}
    $\SKhPALPi$ is sound over the class of all  models. 
    \end{theorem}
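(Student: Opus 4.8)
The plan is to establish soundness in the standard way for this kind of system: verify that every axiom schema is valid over the class of all (epistemic) models, and that every inference rule preserves validity. Since $\SKhPALPi$ is a Hilbert-style system, a routine induction on the length of derivations then shows $\vdash\phi$ implies $\vDash\phi$.

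First I would dispatch the axioms that are already proven valid earlier in the excerpt, simply citing those results. Concretely: $\tensoror$ is immediate from Proposition~\ref{prop.nept}, which shows $\tensor$ collapses to classical $\lor$ outside the scope of $\Kh$; $\TAUT$ and $\DISTK$ are standard since $\K$ is interpreted over the total relation (so it is an $\mathsf{S5}$ box), which also handles $\AxTK$, $\AxTransK$, $\AxEucK$; the $\mathsf{S5\Pi^+}$ fragment ($\DISTA$, $\subforall$, $\SU$, $\Barcan$, rule $\generalization$) is Fine's axiomatization \cite{1970Fine} and validity is known — but since our $\forall p$ quantifies over the full power set $\wp(W_\M)$, I would double-check $\SU$ (existence of atoms: take a singleton, or any world-distinguishing set) and $\Barcan$ directly. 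The announcement reduction axioms $\Anp$, $\Ancirc$, $\AnK$, $\Anforall$ are exactly Proposition~\ref{prop.anreduction}. The know-how reduction axioms $\Khbot$, $\KKhp$, $\KhD$, $\KhC$, $\KhI$, $\Kht$ are exactly Proposition~\ref{prop.khreduction}, and $\KhK$ is Proposition~\ref{prop.Kh2K}. The introspection axioms $\AxTransKh$ and $\AxEucKh$ follow from the observation (made just after the semantics) that $\M,w\vDash\Kh\alpha$ iff $\M\vDash\Kh\alpha$, i.e. satisfaction of a $\Kh$-formula does not depend on the point, so it is seen identically at every $\K$-accessible world.

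Next I would check the rules. $\MP$ trivially preserves validity. $\NECK$: if $\phi$ is valid then it holds at every world of every model, so $\K\phi$ holds everywhere. $\generalization$: if $\phi\to\psi$ is valid and $p$ is not free in $\phi$, then for any $\M,w$ with $\M,w\vDash\phi$, since changing the valuation of $p$ does not affect $\phi$, we get $\M[p\mapsto U],w\vDash\phi$ hence $\M[p\mapsto U],w\vDash\psi$ for all $U$, so $\M,w\vDash\forall p\psi$. The rule $\rRE$ needs the most care: I would invoke Proposition~\ref{prop.rRE}, which states precisely that replacing a valid equivalence's sides preserves validity \emph{provided the substitution is not inside the scope of a $\Kh$} — and the side condition on $\rRE$ matches this exactly, so the rule is sound.

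I expect the main obstacle to be twofold. First, the $\Kht$ axiom: its validity rests on Proposition~\ref{prop.khtensor}, whose proof involves a somewhat delicate interplay between the propositional quantifier $\exists p$, the announcements $[p]$ and $[\neg p]$, and the edge cases where $U$ or $\overline U$ is empty; I would want to make sure the cited proposition really covers the empty-set cases (it does, via the ``implies'' phrasing) and that no hidden use of $p$ occurring free in $\alpha,\beta$ sneaks in — the side condition guards against this. Second, I should be careful that the $\forall p$ semantics over $\wp(W_\M)$ (rather than over some restricted algebra of subsets) is genuinely the one Fine's completeness is stated for; if there is any mismatch it affects $\SU$ and $\subforall$, but since the full power set is the most permissive choice and $\SU$ explicitly asserts atoms exist (witnessed e.g. by a singleton state), everything goes through. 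All remaining axioms are either syntactic tautologies or already-established propositions, so the bulk of the write-up is bookkeeping: list each axiom, cite the corresponding earlier result, and conclude by induction on derivation length.
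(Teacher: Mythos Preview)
Your proposal is correct and matches the paper's own proof, which simply cites Proposition~\ref{prop.anreduction} for the announcement axioms, \cite{1970Fine} for the $\SFivePiplus$ fragment, Proposition~\ref{prop.khreduction} for the $\Kh$-reduction axioms, Proposition~\ref{prop.rRE} for $\rRE$, and declares the rest trivial. One very minor quibble: $\tensoror$ is valid directly from the semantics in Definition~\ref{def.semantics} (where $\lor$ and $\tensor$ share the clause for $\bigcirc$), not really from Proposition~\ref{prop.nept}, which concerns only $\PL$-formulae via resolutions.
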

        \begin{proof}
        The validity of $\Anp$, $\Ancirc$, $\AnK$ and $\Anforall$ are given in Proposition  \ref{prop.anreduction}. $\DISTA$, $\subforall$, $\SU$ and rule $\generalization$ are given in \cite{1970Fine}. $\KKhp$, $\Khbot$, $\KhD$, $\KhC$, $\KhI$, and $\Kht$ are given in Proposition \ref{prop.khreduction}. $\rRE$ is given in Proposition \ref{prop.rRE}. The rest are trivial. \end{proof}

    To prove the completeness we first prove Lemmata \ref{lem.redkh} and \ref{lem.redan} with the two sets of reduction axioms.
    Recall that $\LPALPi$ is the $\Kh$-free fragment of $\LKhP$, and $\LELPi$ is the $[\phi ]$-free fragment of $\LPALPi$. 

   \begin{lemma} \label{lem.redkh}
    Each $\LKhP$-formula is provably equivalent to a $
    \Kh$-free $\LPALPi$ formula in $\SKhPALPi$. 
    \end{lemma}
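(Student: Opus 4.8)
The plan is to proceed by induction on the structure of an arbitrary $\LKhP$-formula $\phi$, showing that there is a $\Kh$-free formula $\phi^{\circ}$ with $\vdash \phi\lra\phi^{\circ}$. The only interesting clauses are those that introduce a $\Kh$; the propositional connectives, $\K$, $\forall p$ and $[\psi]$ are handled by a routine congruence argument using rule $\rRE$ together with the induction hypothesis applied to subformulas (note that $\rRE$ applies freely here, since once we are rewriting a $\Kh$-free formula the side condition about not substituting under $\Kh$ is vacuous). So the heart of the matter is to eliminate $\Kh\alpha$ for $\alpha\in\PL$, which I would do by an \emph{inner} induction on the structure of $\alpha$.

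For the inner induction I would use exactly the reduction axioms $\KKhp$, $\Khbot$, $\KhD$, $\KhC$, $\KhI$, $\Kht$ of the system. For $\alpha=p$ we have $\vdash\Kh p\lra\K p$ from $\KKhp$ together with the provable converse $\KhK$ (instance $\Kh p\to\K p$), and $\K p$ is $\Kh$-free. For $\alpha=\bot$, $\Khbot$ gives $\vdash\Kh\bot\lra\bot$. For $\alpha=\beta\lor\gamma$, $\beta\land\gamma$, axioms $\KhD$, $\KhC$ push $\Kh$ inward to $\Kh\beta$ and $\Kh\gamma$, to which the inner IH applies; then one congruence step with $\rRE$ finishes. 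For $\alpha=\beta\to\gamma$, axiom $\KhI$ rewrites $\Kh(\beta\to\gamma)$ as $\K\forall p[p](\Kh\beta\to\Kh\gamma)$; the inner IH replaces $\Kh\beta,\Kh\gamma$ by $\Kh$-free formulas, and the resulting formula is a $\LPALPi$-formula. For $\alpha=\beta\tensor\gamma$, axiom $\Kht$ rewrites $\Kh(\beta\tensor\gamma)$ as $\exists p\K([p]\Kh\beta\land[\neg p]\Kh\gamma)$ and again the inner IH eliminates the remaining $\Kh$'s. In each case I assemble the equivalence by chaining the relevant axiom with an $\rRE$-congruence step, much as in the displayed $\SKhPALPi$-derivations for $\Anid$ and $\Anexists$ above.

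The one point requiring care — and the place I expect the main (modest) obstacle — is the bookkeeping that makes the two nested inductions cohere: after the inner induction produces a $\Kh$-free equivalent of $\Kh\alpha$, that formula is a $\LPALPi$-formula, i.e.\ it may still contain $[\cdot]$ operators (introduced by $\KhI$ and $\Kht$) and $\forall$/$\exists$; this is fine, since the lemma only claims elimination of $\Kh$, not of $[\cdot]$. One must also ensure the side condition of $\KhI$ and $\Kht$ (that $p$ is fresh for $\alpha,\beta$) is met; this is arranged by choosing $p$ not occurring in the formula being processed, exactly as in the statement of Proposition~\ref{prop.khreduction}. Finally, the outer induction must be set up so that when $\Kh\alpha$ occurs as a subformula of a larger $\phi$, we first apply the outer IH to the parts of $\phi$ outside all $\Kh$'s — but since $\Kh$ only accepts $\PL$-formulas in its scope, $\alpha$ itself is already $\Kh$-free inside, so there is no circularity: the inner induction stands alone and is invoked as a black box at each maximal $\Kh\alpha$ subterm, after which $\rRE$ substitutes the $\Kh$-free equivalent back into $\phi$.

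Putting it together: define $\phi^{\circ}$ by recursion on $\phi$, commuting with all connectives except that $(\Kh\alpha)^{\circ}$ is the $\Kh$-free formula produced by the inner induction on $\alpha$; then a straightforward induction using $\rRE$ (and, at the $\Kh$-nodes, the inner-induction equivalences) shows $\vdash\phi\lra\phi^{\circ}$, and a structural check shows $\phi^{\circ}$ contains no $\Kh$. This is the content of the lemma.
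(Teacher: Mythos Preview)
Your proposal is correct and takes essentially the same approach as the paper: use the reduction axioms $\Khbot$, $\KhC$, $\KhD$, $\KhI$, $\Kht$ together with $\rRE$ to push $\Kh$ down to the atomic level, then eliminate $\Kh p$ via $\vdash\Kh p\lra\K p$ (from $\KhK$ and $\KKhp$). The paper's proof is a terse two-sentence sketch of exactly this; you have simply spelled out the two nested inductions and the bookkeeping (freshness of $p$, the fact that $\alpha\in\PL$ is already $\Kh$-free, the role of $\rRE$) that the paper leaves implicit.
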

        \begin{proof}
        We use $\rRE$ and Axioms $\Khbot, \KhC, \KhD, \KhI$, $\Kht$ repeatedly to reduce $\Kh\alpha$ to some formula with $\Kh p$ only. With $\vdash \Kh p\lra \K p$ from $\KhK$ and $\KKhp$, we can eliminate all $\Kh$ modalities. 
        \end{proof}

To eliminate the announcement operator, we need a notion of complexity. 
    \begin{definition}[Announcement rank]
    For each $\phi\in\LPALPi$, we define its \textit{announcement rank} $\arank(\phi)$ inductively as follows:
    \begin{itemize}
        \item If $\phi=p$ or $\phi=\bot$, then $\arank(\phi)=0$.
        \item If $\phi=\psi_1\bigcirc\psi_2$ where $\bigcirc=\land,\lor,\tensor$ or $\to$
        , then $\arank(\phi)=\max\{\arank(\psi_1),\arank(\psi_2)\}$.
        \item If $\phi=\K\psi$, $p\in\Prop$
        , then $\arank(\phi)=\arank(\psi)$.
        \item If $\phi=\forall p\psi$, $p\in\Prop$
        , then $\arank(\phi)=\arank(\psi)$.
        \item If $\phi=[\chi]\psi$
        , then $\arank(\phi)=\arank(\psi)+\arank(\chi)+1$.
    \end{itemize}
    \end{definition}

    \begin{lemma}\label{lem.anrank}
    Each $\LPALPi$-formula of the form $[\chi]\psi$ is provably equivalent to a  $\LPALPi$-formula $\phi$ in $\SKhPALPi$ such that  $\arank(\phi)<\arank([\chi]\psi)$. 
    \end{lemma}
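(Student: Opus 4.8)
The plan is to prove Lemma~\ref{lem.anrank} by induction on the structure of the formula $\psi$ inside the outermost announcement $[\chi]\psi$, pushing the announcement operator inward using the reduction axioms $\Anp$, $\Ancirc$, $\AnK$, $\Anforall$ (and the derived $\Anid$, $\Anexists$), and invoking $\rRE$ to substitute under arbitrary contexts. At each step the goal is to rewrite $[\chi]\psi$ into a provably equivalent formula in which every announcement operator governs a strictly smaller formula, so that the announcement rank drops. Concretely: if $\psi$ is atomic or $\bot$, then $\Anp$ gives $[\chi]\psi\lra(\chi\to\psi)$, and the right-hand side has rank $\arank(\chi)<\arank([\chi]\psi)$. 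If $\psi=\psi_1\bigcirc\psi_2$, then $\Ancirc$ rewrites $[\chi]\psi$ as $[\chi]\psi_1\bigcirc[\chi]\psi_2$; each $[\chi]\psi_i$ has $\arank=\arank(\chi)+\arank(\psi_i)+1<\arank(\chi)+\arank(\psi)+1$ whenever $\psi_i$ is a proper subformula, so the maximum over the two conjuncts/disjuncts is strictly below $\arank([\chi]\psi)$. The cases $\psi=\K\psi'$ and $\psi=\forall p\psi'$ are handled by $\AnK$ and $\Anforall$ respectively (after $\alpha$-renaming the bound variable away from $\chi$ via $\subforall$ and $\generalization$, as in the proof of $\Anid$), again reducing to announcements over the smaller $\psi'$.

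The one genuinely delicate point is the nested-announcement case $\psi=[\chi']\psi'$, i.e.\ proving $[\chi][\chi']\psi'$ is equivalent to something of smaller rank. Here the outermost $[\chi]$ does not immediately see an atomic, Boolean, $\K$- or $\forall$-headed formula, so none of the reduction axioms applies directly to the pair $[\chi][\chi']$. The standard move is to first apply the induction hypothesis to the \emph{inner} formula $[\chi']\psi'$ --- which has announcement rank $\arank(\chi')+\arank(\psi')+1<\arank([\chi][\chi']\psi')$, hence strictly less than the rank of the whole formula --- to obtain a provably equivalent $\phi^\ast$ with $\arank(\phi^\ast)<\arank([\chi']\psi')$, and then use $\rRE$ to replace $[\chi']\psi'$ by $\phi^\ast$ under the context $[\chi](\cdot)$. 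This yields $\vdash[\chi][\chi']\psi'\lra[\chi]\phi^\ast$. Now $[\chi]\phi^\ast$ has $\arank = \arank(\chi)+\arank(\phi^\ast)+1 < \arank(\chi)+\arank([\chi']\psi')+1 = \arank([\chi][\chi']\psi')$, so we have already strictly decreased the rank, even though $\phi^\ast$ may still contain announcements. Thus the lemma as stated (one step of rank reduction, not full elimination) is exactly what this single application of IH plus $\rRE$ delivers; we do not even need to fully normalize $\phi^\ast$ inside $[\chi]$.

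I would present the induction on $\psi$ with the measure being $\arank([\chi]\psi)$ itself, or equivalently a lexicographic measure on $(\arank(\chi)+\arank(\psi), \text{size of }\psi)$; the key inequality in every case is that replacing $\psi$ by a proper subformula (or by an equivalent of smaller announcement rank, in the nested case) strictly decreases $\arank(\chi)+\arank(\psi)+1$. The main obstacle to watch is bookkeeping of bound-variable clashes in the $\forall p\psi'$ case: $\Anforall$ has the side condition that $p$ does not occur in $\chi$, so one must first rename via the provable equivalence $\vdash\forall p\psi'\lra\forall q\psi'[q/p]$ (derived from $\subforall$ and $\generalization$, exactly as in the $\Anid$ proof above) for a fresh $q$, check that renaming does not change the announcement rank, and only then apply $\Anforall$ followed by $\rRE$ and the induction hypothesis to $[\chi]\psi'[q/p]$. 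All other cases are routine applications of a single reduction axiom together with $\rRE$ and the induction hypothesis, and the rank computation in each is immediate from the definition of $\arank$.
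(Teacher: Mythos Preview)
Your overall strategy---push $[\chi]$ inward via the reduction axioms and induct---is the paper's strategy, and your treatment of the nested case $\psi=[\chi']\psi'$ (apply the hypothesis to the inner announcement, then substitute under $[\chi]$ via $\rRE$) is exactly right. But there is a genuine error in the Boolean (and $\K$, $\forall$) cases. You write that after applying $\Ancirc$, ``each $[\chi]\psi_i$ has $\arank=\arank(\chi)+\arank(\psi_i)+1<\arank(\chi)+\arank(\psi)+1$ whenever $\psi_i$ is a proper subformula,'' and again that ``replacing $\psi$ by a proper subformula \dots\ strictly decreases $\arank(\chi)+\arank(\psi)+1$.'' This is false: by definition $\arank(\psi_1\bigcirc\psi_2)=\max\{\arank(\psi_1),\arank(\psi_2)\}$, so one of the $\psi_i$ has the \emph{same} announcement rank as $\psi$, and hence $\arank([\chi]\psi_i)=\arank([\chi]\psi)$. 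The single rewrite via $\Ancirc$ therefore does \emph{not} by itself produce a formula of strictly smaller rank.

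The repair is precisely the lexicographic measure you mention in passing, $(\arank([\chi]\psi),\text{size of }\psi)$---but it is not ``equivalent'' to inducting on $\arank$ alone; it is strictly finer, and you need that extra fineness. In the Boolean case you must then \emph{also} invoke the induction hypothesis on each $[\chi]\psi_i$ (the second coordinate drops while the first may stay equal) to obtain $\phi_i$ with $\arank(\phi_i)<\arank([\chi]\psi_i)\le\arank([\chi]\psi)$, and only then set $\phi=\phi_1\bigcirc\phi_2$. The paper organizes the same argument as a two-level induction: outer on $n=\arank([\chi]\psi)$, and in the base case $n=1$ an inner structural induction on $\psi$ (where the IH is used in exactly the way just described); for $n>1$ it reduces an announcement subformula inside $\chi$ or $\psi$ via the outer IH. Once you fix the rank bookkeeping, your single lexicographic induction and the paper's double induction are the same proof.
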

       \begin{proof} We prove by induction on $n=\arank([\chi]\psi)$. By definition, $n\geq 1$. In the induction base, suppose $n=1$, then $\arank(\chi)=\arank(\psi)=0$. We prove by induction on $\psi$ that there is a $\phi$ such that $\phi\lra[\chi]\psi$ and $\arank(\phi)<n$.
            \begin{enumerate}
            \item If $\psi=p$ or $\psi=\bot$, then by axiom $\Anp$, $[\chi]\psi\lra\chi\to\psi$. Hence $\phi=\chi\to\psi$ is what we need. 
            \item If $\psi=\psi_1\bigcirc\psi_2$ where $\bigcirc=\land,\lor,\tensor,\to$
            , then by $\Ancirc$, $[\chi]\psi\lra[\chi]\psi_1\bigcirc[\chi]\psi_2$. By IH, there are $\phi_1\lra[\chi]\psi_1$ and $\phi_2\lra[\chi]\psi_2$ such that $\arank(\phi_1)<\arank([\chi]\psi_1)$ and $\arank(\phi_1)<\arank([\chi]\psi_1)$. $\phi=\phi_1\bigcirc\phi_2$ is what we need. 
            \item If $\psi=\K\psi'$, then by $\AnK$, ${[\chi]}(\psi)\lra\chi\to\K{[\chi]}\psi'$. Note that $\arank(\chi)<\arank([\chi]\psi)$ and $\arank([\chi]\psi)=\arank(\K{[\chi]}\psi')$ by definition. By IH, we find $\phi'\lra [\chi]\psi'$. $\phi=\chi\to\K\phi'$ is what we need.
            \item If $\psi=\forall p\psi'$ where $p\in\Prop$, we consider two subcases.1).if $p$ is not in $\chi$, we use $\Anforall$ and the proof is similar to the above cases. 2).if $p$ is in $\chi$,  replace $p$ with the first letter $q\in\Prop$ which is not in $\chi$ and then go to 1).
            \end{enumerate}
        In the induction step, suppose $n>1$. Since $\arank([\chi]\psi)=\arank(\chi)+\arank(\psi)+1$, either $1\leq \arank(\chi)\leq n$ or $1\leq \arank(\psi)\leq n$. Assume that $1\leq \arank(\chi)\leq n$. By IH, we find a $\chi'\lra\chi$ s.t. $\arank(\chi')<\arank(\chi)$. And $\phi=[\chi']\psi$ has the desired properties. The other case is similar.
        \end{proof}
     The idea is that we start from the innermost subformulae, and replace them with equivalent $\LELPi$-formulae using the reduction axioms and $\rRE$. In this way, we can always get an equivalent formula with lower announcement rank. Since the  announcement rank is finite, we can decrease the rank till zero eventually by repeating the process above. Therefore we have the following Lemma \ref{lem.redan} 
    \begin{lemma}\label{lem.redan}
    Each $\LPALPi$-formula is provably equivalent to an $\LELPi$-formula in $\SKhPALPi$. 
    \end{lemma}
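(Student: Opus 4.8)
The plan is to prove the statement by a straightforward induction on the structure of the $\LPALPi$-formula $\varphi$, using rule $\rRE$ to push provable equivalences through the syntax and invoking Lemma~\ref{lem.anrank} as a black box whenever an announcement operator needs to be removed. Two preliminary observations make the bookkeeping frictionless: first, every $\LPALPi$-formula is $\Kh$-free, so the side condition on $\rRE$ (no substitution inside the scope of $\Kh$) is vacuously satisfied; second, from $\vdash\varphi_1\lra\varphi_2$ and $\vdash\varphi_2\lra\varphi_3$ one gets $\vdash\varphi_1\lra\varphi_3$ via $\TAUT$ and $\MP$, so provable equivalence is transitive and chains of reductions compose freely. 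Recall also that $\LELPi$ is exactly the $[\cdot]$-free part of $\LPALPi$ — equivalently, in view of the definition of $\arank$, the announcement-rank-$0$ part, since $\arank$ dominates the rank of every subformula — and that $\LELPi$ is closed under $\land,\lor,\tensor,\to,\K$ and $\forall p$.

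Now the induction. If $\varphi$ is $p$ or $\bot$, it already lies in $\LELPi$ and we are done. If $\varphi=\psi_1\bigcirc\psi_2$ with $\bigcirc\in\{\land,\lor,\tensor,\to\}$, the induction hypothesis gives $\LELPi$-formulae $\psi_1',\psi_2'$ with $\vdash\psi_i\lra\psi_i'$, and $\rRE$ yields $\vdash\varphi\lra\psi_1'\bigcirc\psi_2'$, which is in $\LELPi$. The cases $\varphi=\K\psi$ and $\varphi=\forall p\,\psi$ are handled the same way, using closure of $\LELPi$ under $\K$ and $\forall p$. The only case with real content is $\varphi=[\chi]\psi$: by the induction hypothesis there are $\LELPi$-formulae $\chi',\psi'$ with $\vdash\chi\lra\chi'$ and $\vdash\psi\lra\psi'$, so $\rRE$ gives $\vdash[\chi]\psi\lra[\chi']\psi'$. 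Since $\chi'$ and $\psi'$ are $[\cdot]$-free, $\arank([\chi']\psi')=\arank(\psi')+\arank(\chi')+1=1$, so Lemma~\ref{lem.anrank} produces an $\LPALPi$-formula $\phi$ with $\vdash[\chi']\psi'\lra\phi$ and $\arank(\phi)<1$, i.e.\ $\arank(\phi)=0$; hence $\phi\in\LELPi$. Composing the three equivalences gives $\vdash\varphi\lra\phi$ with $\phi\in\LELPi$.

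I do not anticipate a genuine obstacle. The delicate work — the terminating induction on announcement rank and the renaming of bound variables when an announced formula clashes with a quantifier — has already been discharged inside Lemma~\ref{lem.anrank}; what remains here is the routine lifting of provable equivalence through the connectives with $\rRE$. The one point needing a moment's care is that the appeal to Lemma~\ref{lem.anrank} in the $[\chi]\psi$ case is legitimate precisely because $\chi$ and $\psi$ have been replaced by announcement-free formulae first, so the residual announcement has rank exactly $1$ and the lemma necessarily returns an announcement-free (hence $\LELPi$) formula. An equivalent repackaging, closer to the informal sketch preceding the lemma, is to induct on the number of announcement operators occurring in $\varphi$: pick an innermost occurrence $[\chi]\psi$, so that $\chi,\psi$ are already announcement-free and $\arank([\chi]\psi)=1$, apply Lemma~\ref{lem.anrank} together with $\rRE$ to replace it by an announcement-free formula, which lowers the count by exactly one, and iterate until no announcement remains.
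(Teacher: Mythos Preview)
Your proof is correct and matches the paper's approach: the paper's own argument is the brief informal sketch immediately preceding the lemma (replace innermost announcement subformulae via the reduction axioms and $\rRE$, iterate until no announcements remain), and your structural induction is simply a clean formalization of that idea, with your ``alternative repackaging'' at the end being exactly the paper's phrasing. The only minor improvement over the paper's sketch is that you make explicit why a single appeal to Lemma~\ref{lem.anrank} suffices in the $[\chi]\psi$ case (namely, the inductive replacement of $\chi,\psi$ by announcement-free formulae forces $\arank([\chi']\psi')=1$, so the lemma returns a rank-$0$, hence $\LELPi$, formula).
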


    Combining Lemmata \ref{lem.redkh} and \ref{lem.redan} we immediately have.
    \begin{lemma}\label{lem.express}
     Each $\LKhP$-formula is provably equivalent to an $\LELPi$-formula in $\SKhPALPi$. 
    \end{lemma}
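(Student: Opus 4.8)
\textbf{Proof proposal for Lemma~\ref{lem.express}.}
The plan is to chain together the two reduction lemmata already proved, being careful that the intermediate reductions do not reintroduce operators we have just eliminated. First I would invoke Lemma~\ref{lem.redkh}: given an arbitrary $\LKhP$-formula $\varphi$, we obtain a provably equivalent $\Kh$-free $\LPALPi$-formula $\varphi_1$ with $\vdash_{\SKhPALPi}\varphi\lra\varphi_1$. The key point worth spelling out here is why the $\Kh$-elimination step produces a formula still in $\LPALPi$ (i.e.\ that the right-hand sides of $\Khbot,\KhC,\KhD,\KhI,\Kht$ and $\Kh p\lra\K p$ contain no $\Kh$ once the recursion bottoms out): this is an innermost-first induction on the number of $\Kh$-occurrences, noting that each reduction axiom replaces a $\Kh\alpha$ with $\alpha$ strictly simpler inside the remaining $\Kh$'s, and eventually only $\Kh p$ remains, which is killed by $\KKhp$ together with $\KhK$. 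The announcement operators $[\cdot]$ and quantifiers $\forall p$ possibly introduced by $\KhI$ and $\Kht$ are harmless at this stage because they are exactly the operators permitted in $\LPALPi$.

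Second I would apply Lemma~\ref{lem.redan} to $\varphi_1$: since $\varphi_1\in\LPALPi$, there is an $\LELPi$-formula $\varphi'$ with $\vdash_{\SKhPALPi}\varphi_1\lra\varphi'$. By transitivity of provable equivalence (which follows from $\TAUT$ and $\MP$), $\vdash_{\SKhPALPi}\varphi\lra\varphi'$, and $\varphi'\in\LELPi$ as required. One should remark that Lemma~\ref{lem.redan} does not reintroduce $\Kh$: its proof uses only $\Anp,\Ancirc,\AnK,\Anforall,\subforall,\generalization,\DISTA$ and $\rRE$, none of whose right-hand sides contain $\Kh$, so the order ``eliminate $\Kh$ first, then eliminate $[\cdot]$'' is the correct one and there is no circular dependency.

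The only genuinely delicate point — and hence the step I would present most carefully — is the interaction between the two eliminations: the $\Kht$ and $\KhI$ axioms used in Lemma~\ref{lem.redkh} are the ones that manufacture fresh $[\cdot]$ and $\forall p$ operators, so one must confirm that these appear \emph{outside} the scope of any remaining $\Kh$ (they do, since the recursion eliminates $\Kh$ outermost-first relative to each $\Kh\alpha$ block, working all the way down $\alpha$) and hence are legitimately handed off to Lemma~\ref{lem.redan}. Since both constituent lemmata are already established, no further obstacle remains, and the lemma follows by composing the two translations; I would phrase the final statement as: combining Lemmata~\ref{lem.redkh} and~\ref{lem.redan}, every $\LKhP$-formula is provably equivalent in $\SKhPALPi$ to an $\LELPi$-formula. $\blacksquare$
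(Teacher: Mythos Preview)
Your proposal is correct and follows exactly the paper's approach: the paper's proof is the single sentence ``Combining Lemmata~\ref{lem.redkh} and~\ref{lem.redan} we immediately have,'' and your argument is precisely this composition, together with the (correct) sanity checks that $\Kh$-elimination introduces only $[\cdot]$ and $\forall p$ outside any remaining $\Kh$, and that $[\cdot]$-elimination never reintroduces $\Kh$. Your additional remarks on the order of elimination and the absence of circularity are sound and make explicit what the paper leaves implicit.
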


    Theorem \ref{thm.express} follows naturally from Lemma \ref{lem.express} and Theorem \ref{thm.soundness}.
    \begin{theorem}\label{thm.express}
    $\LKhP$ is equally expressive as $\LELPi$ over all  models. 
    \end{theorem}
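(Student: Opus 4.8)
The plan is to derive Theorem \ref{thm.express} as an immediate corollary of Lemma \ref{lem.express} together with the Soundness Theorem \ref{thm.soundness}, exactly as the surrounding text announces. Since $\LELPi$ is syntactically a fragment of $\LKhP$, one direction of ``equally expressive'' is trivial: every $\LELPi$-formula already is an $\LKhP$-formula, so it is (semantically) equivalent to itself. The work is in the other direction, where we must show that every $\LKhP$-formula $\varphi$ is \emph{semantically} equivalent (over all models) to some $\LELPi$-formula $\varphi'$.

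First I would invoke Lemma \ref{lem.express} to obtain, for a given $\varphi\in\LKhP$, an $\LELPi$-formula $\varphi'$ with $\vdash_{\SKhPALPi}\varphi\lra\varphi'$. Then I would apply Theorem \ref{thm.soundness} (Soundness of $\SKhPALPi$ over the class of all models): since $\varphi\lra\varphi'$ is a theorem, it is valid, i.e. $\M,w\vDash\varphi$ iff $\M,w\vDash\varphi'$ for every pointed model $\M,w$. That is precisely the statement that $\varphi$ and $\varphi'$ are semantically equivalent, which is what ``$\LKhP$ is equally expressive as $\LELPi$'' means. The argument is essentially two lines: \emph{provable equivalence} (Lemma \ref{lem.express}) plus \emph{soundness} (Theorem \ref{thm.soundness}) yields \emph{semantic equivalence}, and then expressivity follows by definition.

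I would also note for completeness that this gives a concrete translation: the chain of reductions behind Lemmata \ref{lem.redkh}, \ref{lem.anrank} and \ref{lem.redan} is effective (it proceeds by first eliminating $\Kh$ via $\Khbot,\KhC,\KhD,\KhI,\Kht$ and $\KhK$/$\KKhp$, and then eliminating $[\cdot]$ by induction on the announcement rank using $\Anp,\Ancirc,\AnK,\Anforall$), so the map $\varphi\mapsto\varphi'$ is computable, even though the statement of the theorem only records the existence of an equi-expressive fragment. No new semantic argument is needed here; everything has been done in Section \ref{sec.reduction} and Subsection on provable equivalence.

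There is essentially no obstacle in this final step — it is a bookkeeping corollary. The only thing to be slightly careful about is the direction of the biconditional in the definition of ``equally expressive'': one must explicitly observe the trivial inclusion $\LELPi\subseteq\LKhP$ (so that no genuine translation is needed from $\LELPi$ into $\LKhP$), and then use Lemma \ref{lem.express} and Theorem \ref{thm.soundness} for the nontrivial direction. All the real content — the reduction axioms, their soundness, and the rank-decreasing elimination procedure — has already been established, so the proof here is just the assembly of those pieces.
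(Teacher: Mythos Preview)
Your proposal is correct and follows exactly the paper's own approach: the paper states that Theorem \ref{thm.express} ``follows naturally from Lemma \ref{lem.express} and Theorem \ref{thm.soundness},'' which is precisely the provable-equivalence-plus-soundness argument you give. Your extra remarks on the trivial inclusion $\LELPi\subseteq\LKhP$ and the effectiveness of the translation are helpful elaborations but do not deviate from the intended proof.
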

Note that $\LELPi$ is more expressive than $\LEL$ \cite{1970Fine}.
\subsection{Completeness}
    With Lemma \ref{lem.express} and Theorem \ref{thm.express}, the completeness of System $\SKhPALPi$ can be reduced to that of $\SFivePiplus$, which is given in \cite{1970Fine}. $\SFivePiplus$ is a variety of second order modal logic, containing all the axiom schmeta\slash rules of $S5$ as well as those concerning propositional quantifiers in $\SKhPALPi$.

\begin{theorem}[Completeness of $\SFivePiplus$ \citep{1970Fine}]\label{prop.s5+}
$\SFivePiplus$ is a complete axiomatization with regard to the class of models.
\end{theorem}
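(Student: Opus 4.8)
The plan is to establish the non-trivial direction --- completeness: every $\SFivePiplus$-consistent $\LELPi$-formula is satisfiable in some model of Definition~\ref{def.model}, soundness being a routine check of each axiom and rule against the semantics of Definition~\ref{def.semantics} (and, for the quantifier axioms $\DISTA$, $\subforall$, $\SU$, this is already recorded in the proof of Theorem~\ref{thm.soundness}). First I would run a Lindenbaum construction to extend a given consistent $\phi$ to a maximal consistent set $\Gamma$. The genuine difficulty, and the whole reason the axiom $\SU$ is present, is that $\forall p$ in our semantics ranges over the \emph{full} powerset $\wp(W_\M)$, whereas a naive canonical model only accounts for the ``definable'' subsets; the construction must therefore be arranged so that every subset of the world-domain is in fact named.

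The key tool is $\SU$, which asserts the existence of \emph{world-propositions} (true singletons). Working with a Henkin-style enrichment of the language by fresh proposition letters, I would add witnesses for every existential $\exists p\,\psi\in\Gamma$, and in particular extract, from repeated use of $\SU$ together with $\Barcan$, a supply of atoms $a_0,a_1,\dots$ that will serve as the worlds. Concretely, take $W$ to be a set of representatives of the atoms forced to be non-empty, put $V(q)=\{a\in W\mid \K(a\to q)\in\Gamma\}$, and prove a truth lemma by induction on formulas --- the crucial clauses being $\forall p$ and $\K$: $\subforall$ and $\DISTA$ handle substitution instances, $\Barcan$ lets one commute $\forall$ past $\K$, the $\SFive$ axioms $\AxTK,\AxTransK,\AxEucK$ give the usual cluster structure, and the atom machinery guarantees that an arbitrary $U\subseteq W$ is witnessed by a suitable proposition, so that the semantic $\forall p$ matches the syntactic one. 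An equivalent and, to my taste, cleaner route, which is the one I would actually pursue, is a normal-form argument: using the atoms one shows every $\LELPi$-formula over proposition letters $p_1,\dots,p_k$ is provably equivalent to a Boolean combination of ``cardinality descriptions'' recording, for each of the $2^k$ complete state descriptions $\delta_i$, how many worlds realize $\delta_i$ --- where the count is tracked only up to a bound determined by $\phi$, beyond which all that matters is that there are at least that many. Consistency of $\phi$ then reads off a concrete finite-or-infinite witnessing model directly, and provability of valid formulas follows because the reasoning about descriptions is pure propositional logic plus the counting principles derivable from $\SU$.

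The main obstacle I expect is exactly this matching of the full power-set semantics of $\forall p$ with what the axioms pin down: one must verify that $\SU$ (with $\Barcan$ and the $\SFive$ axioms) really forces enough atoms to exhaust $W$ and enough named propositions to exhaust $\wp(W)$, and one must treat separately the case in which $\phi$ is consistent with ``there are infinitely many worlds,'' where no finite model suffices and a compactness-style or explicit infinite construction is needed. Once this is in place, completeness of $\SFivePiplus$ over all models follows, and --- combined with Lemma~\ref{lem.express} and Theorem~\ref{thm.soundness} --- it yields completeness of the full system $\SKhPALPi$.
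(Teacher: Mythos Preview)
The paper does not prove this theorem at all: it is stated with an explicit citation to Fine~\cite{1970Fine} and used as a black box in the proof of Theorem~\ref{thm.comppalkhpi}. So there is no ``paper's own proof'' to compare against; the authors simply import Fine's result.

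That said, your sketch is broadly faithful to what Fine actually does. The normal-form route you prefer is essentially his: the atom axiom $\SU$ lets one isolate world-propositions, and one shows that every closed $\LELPi$-formula in finitely many free letters is provably equivalent to a Boolean combination of statements about how many worlds realise each complete state description (bounded by the quantifier depth). Completeness then follows by reading off a model from any consistent such description. Your first route (Henkin witnesses plus a canonical model) is more delicate than you make it sound: merely adding witnesses for existentials does not by itself guarantee that \emph{every} subset of the resulting world-set is named by some proposition letter, which is what the full-powerset semantics of $\forall p$ demands; one really needs the atom machinery, not just Henkinisation, to secure this. So if you were to write the proof out, the normal-form argument is the one that goes through cleanly, and that is the one Fine gives. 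For the purposes of the present paper, though, a citation is all that is required.
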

\begin{theorem}[Completeness]\label{thm.comppalkhpi}
 System $\SKhPALPi$ is a complete axiomatization of $\InqKhL$.
\end{theorem}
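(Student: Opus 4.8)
The plan is to reduce the completeness of $\SKhPALPi$ to Fine's completeness theorem for $\SFivePiplus$ (Theorem~\ref{prop.s5+}), exploiting the reduction machinery assembled above. Since soundness is Theorem~\ref{thm.soundness}, only the converse remains: given any valid $\phi\in\LKhP$, that is, $\phi\in\InqKhL$, produce a derivation witnessing $\vdash_{\SKhPALPi}\phi$.

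First I would apply Lemma~\ref{lem.express} to obtain an $\LELPi$-formula $\phi'$ with $\vdash_{\SKhPALPi}\phi\lra\phi'$; by soundness $\vDash\phi\lra\phi'$, so $\phi'$ is valid as well. Next, since $\LELPi$ is $\Kh$-free, the rule $\rRE$ applies without its scope restriction, so using the axiom $\tensoror$ together with $\rRE$ I would rewrite every occurrence of $\tensor$ in $\phi'$ as $\lor$, obtaining a formula $\phi''$ built solely from $p,\bot,\land,\lor,\to,\K,\forall p$ with $\vdash_{\SKhPALPi}\phi'\lra\phi''$, and hence $\vDash\phi''$ again by soundness. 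Since Theorem~\ref{prop.s5+} is stated for precisely the class of models of Definition~\ref{def.model}, it gives $\vdash_{\SFivePiplus}\phi''$; and because every axiom schema and rule of $\SFivePiplus$ --- the \SFive{} principles for $\K$, $\TAUT$, $\DISTA$, $\subforall$, $\SU$, $\generalization$, $\MP$, $\NECK$ --- already belongs to $\SKhPALPi$, this derivation is literally a derivation in $\SKhPALPi$, so $\vdash_{\SKhPALPi}\phi''$. Finally I would chain the two provable equivalences backwards (via $\MP$, or $\rRE$) to conclude $\vdash_{\SKhPALPi}\phi$.

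Almost all the content is already packaged in the preparatory lemmata, so I do not expect a deep obstacle. The step needing most care is ensuring the reduction lands in exactly the fragment addressed by Theorem~\ref{prop.s5+}: after Lemma~\ref{lem.express} and the $\tensor$-elimination one must check that no $\Kh$, no $[\cdot]$, and no $\tensor$ survives in $\phi''$, and that the class of models for which $\SKhPALPi$ is complete coincides with the class for which Fine's theorem is formulated (both being the standard full power-set models with the total accessibility relation and $\forall p$ ranging over all of $\wp(W_\M)$). One should also confirm that each rewriting in the chain is applied in the validity-preserving direction, which is precisely what Theorem~\ref{thm.soundness} guarantees.
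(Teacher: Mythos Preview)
Your proposal is correct and follows essentially the same route as the paper: reduce $\phi$ to a provably equivalent $\LELPi$-formula via Lemma~\ref{lem.express}, invoke soundness to transfer validity, apply Fine's completeness (Theorem~\ref{prop.s5+}), lift the $\SFivePiplus$-derivation into $\SKhPALPi$, and chain back via the provable equivalence. The one place you are more careful than the paper is the intermediate $\tensor$-elimination step: the paper applies Theorem~\ref{prop.s5+} directly to $\phi'\in\LELPi$, tacitly treating $\tensor$ as $\lor$, whereas you explicitly use $\tensoror$ and $\rRE$ to produce a $\tensor$-free $\phi''$ before appealing to Fine --- a small but genuine improvement in rigor, not a different strategy.
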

\begin{proof}
We first use Lemma \ref{lem.redkh} and Lemma \ref{lem.redan} to translate each $\LKhP$-formula $\phi$ into an equivalent $\LELPi$-formula $\phi'$ and then use the completeness of $\SFivePiplus$. Note that $\vdash\phi$ below means $\phi$ is in $\SKhPALPi$. 
  $$\vDash\phi\  
  \underset{\text{Theorem  \ref{thm.express}}}{\overset{\text{expressive equivalence}}{\iff}} 
  \ \vDash \phi'\ 
  \underset{\text{Theorem  \ref{prop.s5+}}}{\overset{\text{completeness of } \SFivePiplus}{\iff}}
  \ \vdash_{\SFivePiplus}\phi'\\$$
  $$   
  \overset{\SFivePiplus\subseteq\SKhPALPi}{\implies}
  \ \vdash\phi'\ 
  \underset{\text{Lemma  \ref{lem.express}}}{\overset{\text{\text{provable equivalence}}}{\iff}}
  \ \vdash\phi$$
\end{proof}
\section{Generalization of Tensor Disjunction}\label{sec.gtensor}
Inspired by our epistemic interpretation, we generalize the binary $\tensor$ to $n$-ary operators for any $n\geq 2$ with another parameter $k\leq n$. 
\subsection{Generalizing the tensor operator} 
    Consider the following scenario: You completed an exam with $n$ questions with one point each, and get a total score of $m$ without knowing which of your answers were correct. What is your epistemic state? The original tensor  actually captures the special case when $m=1$ and $n=2$: you have two resolutions for $\alpha$ and $\beta$ respectively, and you are sure at least one of them must be an actual resolution for the corresponding formula. For any $n\geq 2$ and $1\leq m\leq n$, we now define an $n$-ary connective $\tensor^k_n$. 


\begin{definition}[Language $\PLT$]
The \emph{propositional language with general tensor} ($\PLT$) is as follows:
$$\alpha::= p\mid \bot\mid (\alpha\land\alpha)\mid (\alpha\lor\alpha)\mid (\alpha\to\alpha)\mid \tensor_n^k(\underbrace{\alpha,\cdots,\alpha}_{n})
\vspace{-7pt}
$$
\noindent where $p\in \Prop$ and $n\geq 2$, $1\leq k\leq n$. 
\end{definition}

\begin{definition}[Language $\LKhPT$] The \emph{Public Announcement Logic with Know-how and General Tensor} ($\LKhPT$) is as follows:
$$\phi::= p\mid \bot \mid (\phi\land\phi)\mid (\phi\lor\phi)\mid (\phi\to\phi)\mid \tensor_n^k(\underbrace{\phi,\cdots,\phi}_{n})\mid\K\phi\mid \Kh\alpha \mid \forall p\phi \mid [\phi]\phi 
\vspace{-10pt}
$$
\noindent where  $p\in \Prop$ and $\alpha\in\PLT$. 
\end{definition}

Now, we introduce the semantics of new connectives $\tensor_n^k$ via resolutions.
\begin{definition}\label{def.general R}

For any positive integer $n\geq 2$ and $1\leq k\leq n$, we define the resolution space and resolution of $\tensor_n^k$ as follow:
$$\begin{aligned}
\S(\tensor_n^k({\alpha_1,\cdots,\alpha_n}))&=\S(\alpha_1)\times\cdots\times\S(\alpha_n)\\
\R(w,\tensor_n^k({\alpha_1,\cdots,\alpha_n}))&=\{(r_1,\cdots,r_n) \mid k\leq | \{ i\in [1,n] \mid r_i\in \R(w,\alpha_i) \}| \}
\end{aligned}$$
\end{definition}

The truth condition for $\Kh$ is as before in Definition \ref{def.semantics}. In particular, $\M,w\vDash \Kh\tensor^k_n(\alpha_1,\cdots,\alpha_n)$ iff $\R(W_\M, \tensor^k_n(\alpha_1,\cdots,\alpha_n))\neq\emptyset$.

By Definition \ref{def.semantics} and \ref{def.general R}, it is not hard to see the following. 
\begin{proposition}\label{prop.KhTeq}
$\M,w\vDash\Kh\tensor_n^k({\alpha_1,\cdots,\alpha_n})$ if and only if there is an n-tuple $(r_1,\cdots,r_n)$ such that for any $v\in W_\M$, $|\{i\mid r_i\in\R(v,\alpha_i)\}|\geq k,$ i.e., there are at least $k$ indexes $i\in [1, n]$ such that $r_i\in\R(v,\alpha_i)$. 
\end{proposition}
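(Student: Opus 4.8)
The plan is to unfold both sides of the claimed biconditional directly from the semantics of $\Kh$ and the resolution clause for $\tensor^k_n$, exactly as indicated by the phrase ``by Definition \ref{def.semantics} and \ref{def.general R}''. Recall that $\M,w\vDash\Kh\alpha$ holds iff there exists $x\in\S(\alpha)$ with $x\in\R(v,\alpha)$ for every $v\in W_\M$, equivalently iff $\R(W_\M,\alpha)\neq\emptyset$ where $\R(W_\M,\alpha)=\bigcap_{v\in W_\M}\R(v,\alpha)$. Applying this with $\alpha=\tensor^k_n(\alpha_1,\dots,\alpha_n)$, we get that $\M,w\vDash\Kh\tensor^k_n(\alpha_1,\dots,\alpha_n)$ iff there is an $n$-tuple $(r_1,\dots,r_n)\in\S(\alpha_1)\times\cdots\times\S(\alpha_n)=\S(\tensor^k_n(\alpha_1,\dots,\alpha_n))$ such that $(r_1,\dots,r_n)\in\R(v,\tensor^k_n(\alpha_1,\dots,\alpha_n))$ for all $v\in W_\M$.

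The core step is then to substitute Definition \ref{def.general R}: the condition $(r_1,\dots,r_n)\in\R(v,\tensor^k_n(\alpha_1,\dots,\alpha_n))$ unpacks precisely to $k\leq |\{i\in[1,n]\mid r_i\in\R(v,\alpha_i)\}|$. So the right-hand side becomes: there is an $n$-tuple $(r_1,\dots,r_n)$ such that for every $v\in W_\M$, $|\{i\mid r_i\in\R(v,\alpha_i)\}|\geq k$, which is exactly the statement in the proposition. I would write this as a short chain of ``iff'' lines, one per definitional unfolding, perhaps inside an \texttt{aligned} environment, ending with the restatement ``i.e., there are at least $k$ indexes $i\in[1,n]$ such that $r_i\in\R(v,\alpha_i)$'' as a gloss. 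No induction is needed here since the outermost connective is already $\Kh\tensor^k_n$ and we are not decomposing the $\alpha_i$'s.

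There is essentially no obstacle: the proposition is a direct reformulation, and the only thing to be slightly careful about is the quantifier order — the $n$-tuple must be chosen \emph{uniformly} before quantifying over worlds $v$ (this is the ``$\exists x\,\K$'' shape of $\Kh$ emphasized earlier in the paper), as opposed to a per-world choice, and one should make sure the membership $(r_1,\dots,r_n)\in\S(\tensor^k_n(\alpha_1,\dots,\alpha_n))$ is recorded so that the witness indeed lies in the resolution space. Everything else is mechanical substitution of definitions, which is why the paper states it as ``not hard to see.''
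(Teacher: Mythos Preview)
Your proposal is correct and matches the paper's treatment: the paper does not give a separate proof but simply declares the proposition ``not hard to see'' from Definitions \ref{def.semantics} and \ref{def.general R}, and your chain of definitional unfoldings is precisely that argument spelled out. Your attention to the uniform $\exists$--$\forall$ quantifier order and to the tuple lying in $\S(\alpha_1)\times\cdots\times\S(\alpha_n)$ is appropriate and not something the paper elaborates on.
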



Note that based on the above proposition, the truth condition for $\tensor^1_2$ is exactly as the one for the standard $\tensor$ defined earlier.  


$\tensor^k_n$ can also appear out of $\Kh$. Hence we define its semantics as below. 

\begin{definition}[Semantics]\label{def.gt}
$$\begin{aligned}
\M,w\vDash \tensor_n^k({\phi_1,\cdots,\phi_n}) & \iff \M,w\vDash \bigvee_{\substack{I\subseteq\{1,2,\cdots,n\}\\ | I |=k}}\bigwedge_{i\in I}\phi_i\\
\end{aligned}$$

\end{definition}
\noindent The semantics is guided by Proposition \ref{prop.nept}, with the desired property below. 
\begin{proposition}
For any $\alpha\in\PLT$ and $\M,w$, $\M,w\vDash \alpha \iff \R(w,\alpha)\neq\varnothing$.
\end{proposition}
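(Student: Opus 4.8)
The plan is to prove this by induction on the structure of $\alpha \in \PLT$. For the cases $p$, $\bot$, $\land$, $\lor$, $\to$ the argument is identical to that of Proposition \ref{prop.nept}, since these constructs and their resolution clauses are unchanged; so the only genuinely new case is $\alpha = \tensor_n^k(\alpha_1,\dots,\alpha_n)$, and that is where all the work lies.

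For the tensor case, I would unfold both sides and show they match. On the semantic side, by Definition \ref{def.gt} we have $\M,w \vDash \tensor_n^k(\alpha_1,\dots,\alpha_n)$ iff $\M,w \vDash \bigvee_{|I|=k}\bigwedge_{i\in I}\alpha_i$, which (since $\lor$ and $\land$ are classical here) holds iff there is some $I \subseteq \{1,\dots,n\}$ with $|I|=k$ such that $\M,w\vDash\alpha_i$ for all $i\in I$. By the induction hypothesis this is equivalent to: there is $I$ with $|I|=k$ and $\R(w,\alpha_i)\neq\varnothing$ for every $i\in I$. On the resolution side, by Definition \ref{def.general R}, $\R(w,\tensor_n^k(\alpha_1,\dots,\alpha_n))\neq\varnothing$ iff there is a tuple $(r_1,\dots,r_n)\in\S(\alpha_1)\times\cdots\times\S(\alpha_n)$ with $|\{i : r_i\in\R(w,\alpha_i)\}|\geq k$. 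The two conditions are then seen to be equivalent: from a witnessing $I$, for each $i\in I$ pick some $r_i\in\R(w,\alpha_i)$ (nonempty by hypothesis) and for $i\notin I$ pick any $r_i\in\S(\alpha_i)$ (nonempty since resolution spaces are nonempty by Definition \ref{def.rs}), giving a tuple whose ``correct'' set has size $\geq k$; conversely, from such a tuple, the index set $\{i : r_i\in\R(w,\alpha_i)\}$ has size $\geq k$, so any $k$-element subset $I$ of it witnesses the disjunction, and $\R(w,\alpha_i)\neq\varnothing$ for $i\in I$ since it contains $r_i$.

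I do not expect a serious obstacle here; the statement is essentially a bookkeeping check that Definition \ref{def.gt} was set up to be faithful to the resolution semantics, exactly as the sentence ``The semantics is guided by Proposition \ref{prop.nept}'' advertises. The one point to handle with a little care is the nonemptiness of $\S(\alpha_i)$ for the indices outside the chosen witness set $I$ — this is what lets us complete a partial choice of correct resolutions into a full tuple — but this is guaranteed by the standing assumption that resolution spaces are nonempty. I would state that reliance explicitly rather than leave it implicit.
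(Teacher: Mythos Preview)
Your proposal is correct and follows essentially the same approach as the paper: induct on $\alpha$, defer all cases except $\tensor_n^k$ to Proposition~\ref{prop.nept}, and for the tensor case unfold Definition~\ref{def.gt} on one side and Definition~\ref{def.general R} on the other to reduce both to ``at least $k$ of the $\R(w,\alpha_i)$ are nonempty.'' You are in fact more explicit than the paper, which compresses the tuple construction and the reliance on nonemptiness of $\S(\alpha_i)$ into a single ``it is easy to see'' remark.
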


 \begin{proof}
 Based on Proposition \ref{prop.nept}, we only consider the case of $\tensor_n^k({\alpha_1,\cdots,\alpha_n})$. 
 $$\begin{aligned}
 &\M,w\vDash \tensor_n^k({\alpha_1,\cdots,\alpha_n})\\
 \iff & \M,w\vDash \bigvee_{\substack{I\subseteq\{1,2,\cdots,n\}\\ | I |=k}}\bigwedge_{i\in I}\alpha_i\\
 \iff & \exists I\subseteq\{1,2,\cdots,n\} \text{ with } | I |=k \text{ s.t. } \M,w\vDash\bigwedge_{i\in I}\alpha_i\\
 \iff & \exists I\subseteq\{1,2,\cdots,n\} \text{ with } | I |=k \text{ s.t. } \forall i\in I, \R(w,\alpha_i)\neq\emptyset \text{(by IH)} \quad (\dagger)\\
 \end{aligned}$$

 And it is easy to see that $\R(w,\tensor_n^k({\alpha_1,\cdots,\alpha_n}))$ is nonempty iff at least $k$ of $\R(w,\alpha_i)$ is nonempty. Hence, $(\dagger)$ implies that $\R(w,\tensor_n^k({\phi_1,\cdots,\phi_n}))\neq\emptyset$. 
 \end{proof}

Next, we show how to reduce the general tensors in $\LKhPT$.
\begin{proposition}\label{prop.semantic tensor}
The following schemata are valid: 
$$\begin{aligned}
&\gt &&  \tensor_n^k({\phi_1,\cdots,\phi_n})\lra\bigvee_{\substack{I\subseteq\{1,2,\cdots,n\}\\ | I |=k}}\bigwedge_{i\in I}\phi_i \\
\vspace{-10pt}
&\Khgt  &&  \Kh\tensor_n^k({\alpha_1,\cdots,\alpha_n})\lra\exists p_1\cdots\exists p_n(\K\tensor_n^k(p_1,\cdots,p_n)\land\bigwedge_{i=1}^{n}\K[p_i]\Kh\alpha_i) \\
\end{aligned}
\vspace{-5pt}
$$
(where all the $p_i$ do not occur free in all the $\alpha_i$)
\end{proposition}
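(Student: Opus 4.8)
The plan is to establish the two schemata separately, treating $\gt$ as essentially a restatement of the semantic clause in Definition \ref{def.gt} and treating $\Khgt$ as a generalization of the argument already used for $\Kht$ (Proposition \ref{prop.khreduction}). For $\gt$, I would simply unfold the semantics: for any pointed model $\M,w$, the clause $\M,w\vDash \tensor_n^k({\phi_1,\cdots,\phi_n})$ is \emph{defined} to be $\M,w\vDash \bigvee_{|I|=k}\bigwedge_{i\in I}\phi_i$, so the biconditional is valid by definition. This case is routine and will take only a line.

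For $\Khgt$ the real work is in the left-to-right and right-to-left directions of
$$\Kh\tensor_n^k({\alpha_1,\cdots,\alpha_n})\lra\exists p_1\cdots\exists p_n\bigl(\K\tensor_n^k(p_1,\cdots,p_n)\land\textstyle\bigwedge_{i=1}^{n}\K[p_i]\Kh\alpha_i\bigr),$$
carried out semantically over an arbitrary $\M,w$. First I would prove the analogue of Proposition \ref{prop.khtensor}: given fresh $p_1,\dots,p_n$, the right-hand side holds at $\M,w$ iff there exist subsets $U_1,\dots,U_n\subseteq W_\M$ with $\M[p_1\mapsto U_1,\dots,p_n\mapsto U_n]\vDash \K\tensor^k_n(p_1,\dots,p_n)$ — which by the semantics of $\tensor^k_n$ out of $\Kh$ means every world lies in at least $k$ of the $U_i$ — and with the property that $U_i\not=\emptyset$ implies $\R(U_i,\alpha_i)\not=\emptyset$ for each $i$. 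The passage from $\K[p_i]\Kh\alpha_i$ to ``$U_i\not=\emptyset$ implies $\R(U_i,\alpha_i)\not=\emptyset$'' is exactly the computation in the proof of Proposition \ref{prop.khtensor}, applied coordinatewise, using that the $p_i$ do not occur free in any $\alpha_j$ and that $\M|_{U_i}$ restricted further by $p_i$ is just $\M|_{U_i}$.

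Then for ($\Rightarrow$): assuming $\M,w\vDash \Kh\tensor^k_n(\alpha_1,\dots,\alpha_n)$, Proposition \ref{prop.KhTeq} gives an $n$-tuple $(r_1,\dots,r_n)$ with $|\{i\mid r_i\in\R(v,\alpha_i)\}|\geq k$ for every $v\in W_\M$; I set $U_i=\{v\in W_\M\mid r_i\in\R(v,\alpha_i)\}$. By construction every $v$ lies in at least $k$ of the $U_i$, so $\K\tensor^k_n(p_1,\dots,p_n)$ holds under the assignment $p_i\mapsto U_i$; and if $U_i\not=\emptyset$ then $r_i\in\R(U_i,\alpha_i)$, so $\R(U_i,\alpha_i)\not=\emptyset$. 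The characterization above then yields the right-hand side. For ($\Leftarrow$): given witnessing $U_1,\dots,U_n$ with every world in at least $k$ of them and with $\R(U_i,\alpha_i)\not=\emptyset$ whenever $U_i\not=\emptyset$, I pick $r_i\in\R(U_i,\alpha_i)$ when $U_i\not=\emptyset$ and an arbitrary $r_i\in\S(\alpha_i)$ (nonempty by Definition \ref{def.rs}) otherwise; then for each $v$, the set of $i$ with $v\in U_i$ has size $\geq k$ and for each such $i$ we have $r_i\in\R(U_i,\alpha_i)\subseteq\R(v,\alpha_i)$, so $|\{i\mid r_i\in\R(v,\alpha_i)\}|\geq k$, giving $(r_1,\dots,r_n)\in\R(W_\M,\tensor^k_n(\alpha_1,\dots,\alpha_n))$ and hence $\M,w\vDash\Kh\tensor^k_n(\alpha_1,\dots,\alpha_n)$ by Proposition \ref{prop.KhTeq}.

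The main obstacle is bookkeeping rather than conceptual: handling the edge cases where some $U_i$ are empty (so that $\M|_{U_i}$ is undefined and one must fall back on an arbitrary element of $\S(\alpha_i)$, exactly as in the $U=\emptyset$ subcase of the $\Kht$ proof), and making the simultaneous substitution lemma precise enough that the ``$p_i$ not free in $\alpha_j$'' side condition is used correctly when commuting the several announcements $[p_i]$ past the $\Kh\alpha_i$. I would state and prove the simultaneous-assignment version of Proposition \ref{prop.khtensor} as a separate displayed claim first, then the two directions of $\Khgt$ become short. I expect no use of the axiom of choice is needed since the $U_i$ and $r_i$ are constructed explicitly, just as noted after the $\KhI$ case.
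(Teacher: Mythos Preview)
Your proposal is correct and follows essentially the same approach as the paper: $\gt$ is immediate from Definition~\ref{def.gt}, and for $\Khgt$ both directions use Proposition~\ref{prop.KhTeq}, with $U_i=\{v\mid r_i\in\R(v,\alpha_i)\}$ in the forward direction and the explicit choice $r_i\in\R(U_i,\alpha_i)$ (or $r_i\in\S(\alpha_i)$ when $U_i=\emptyset$) in the backward direction. The only cosmetic difference is that you factor out the analogue of Proposition~\ref{prop.khtensor} as a separate displayed claim, whereas the paper inlines that semantic unpacking directly into the $(\Rightarrow)$ and $(\Leftarrow)$ arguments.
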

\begin{proof}
$\gt$ is valid by the truth condition of $\tensor^k_n$ in Definition \ref{def.gt}.

For $\Khgt$: 
\begin{itemize}
    \item[$\Longrightarrow$] By Proposition \ref{prop.KhTeq} $\M,w\vDash \Kh\tensor_n^k({\alpha_1,\cdots,\alpha_n})$ iff there is an n-tuple $(r_1,\cdots,r_n)$ s.t. for any $v\in W_\M$, there are at least $k$ indexes $i\in [1,n]$ such that $r_i\in\R(v,\alpha_i)$. 
    Let $U_i=\{v\in W_\M \mid r_i\in \R(v,\alpha_i)\}$, then consider $\M[\bar{p}\mapsto\bar{U}] =\langle W,\V'\rangle$ such that $V'$ assigns $U_i$ to $p_i$ for $i\in \{1,\dots,n\}$ and coincides with $V$ on all other atoms. 
    Then, for any $v\in W_\M$, there are at least $k$ indexes $i\in [1,n]$ s.t. $\M[\bar{p}\mapsto\bar{U}],v\vDash p_i$, so $\M[\bar{p}\mapsto\bar{U}],w\vDash \K\tensor_n^k(p_1,\cdots,p_n)$. 
    And since for any $v\in U_i$ we have $r_i\in\R(v,\alpha_i)$, so we have for any $v\in W_\M$, $\M[\bar{p}\mapsto\bar{U}],v\vDash[p_i]\Kh\alpha_i$, hence $\M[\bar{p}\mapsto\bar{U}],w\vDash \K[p_i]\Kh\alpha_i$. 
    So $\M[\bar{p}\mapsto\bar{U}],w\vDash\K\tensor_n^k(p_1,\cdots,p_n)\land\bigwedge_{i=1}^{n}\K[p_i]\Kh\alpha_i$, which is equivalent to $\M,w\vDash\exists p_1\cdots\exists p_n(\K\tensor_n^k(p_1,\cdots,p_n)\land\bigwedge_{i=1}^{n}\K[p_i]\Kh\alpha_i)$. 
    \item[$\Longleftarrow$] Suppose $\M,w\vDash\exists p_1\cdots\exists p_n(\K\tensor_n^k(p_1,\cdots,p_n)\land\bigwedge_{i=1}^{n}\K[p_i]\Kh\alpha_i)$, then there are $U_i\subseteq W_\M$ such that $\M[\bar{p}\mapsto\bar{U}],w\vDash\K\tensor_n^k(p_1,\cdots,p_n)\land\bigwedge_{i=1}^{n}\K[p_i]\Kh\alpha_i$. 
    
    For the first conjunct: $\M[\bar{p}\mapsto\bar{U}],w\vDash\K\tensor_n^k(p_1,\cdots,p_n)$ means that for any $v\in W_\M$ we have $\M[\bar{p}\mapsto\bar{U}],v\vDash\tensor_n^k(p_1,\cdots,p_n)$. So at least $k$ of $p_i$ is true in $v$, which means that $v$ belongs to at least $k$ of $U_i$. For the second conjunct: $\M[\bar{p}\mapsto\bar{U}],w\vDash\bigwedge_{i=1}^{n}\K[p_i]\Kh\alpha_i$ means that for any $v\in W_\M$ $v\in U_i$ implies that $\R(U_i,\alpha_i)\neq\emptyset$. So, if $U_i\neq\emptyset$, choose an element from $\R(U_i,\alpha_i)$ and denote it as $r_i$. If $U_i=\emptyset$, choose an arbitrary element from $\S(\alpha_i)$ and denote it as $r_i$. 
    
    Combining the meaning of the two conjuncts, we know that for any $v\in W_\M$, $v$ belongs to at least $k$ of $U_i$ and $U_i\neq\emptyset$ implies $r_i\in\R(U_i,\alpha_i)$ for every $i$. Hence, $(r_1,\cdots,r_n)$ is a $n$-tuple such that for any $v\in W_\M$, there are at least $k$ indexes $i\in [1,n]$ such that $r_i\in\R(v,\alpha_i)$, by Proposition \ref{prop.KhTeq}, we have $\M,w\vDash\Kh\tensor^k_n(\alpha_1,\cdots,\alpha_n)$. 
\end{itemize}
\end{proof}
By using the reduction axioms above, all general tensors can be eliminated semantically, and thus $\LKhPT$ and $\LKhP$ are equally expressive. 

Let $\SKhPALPiT$ be $\SKhPALPi$ extended with  $\gt$ and $\Khgt$ for any $n\geq 2$ and $1\leq k\leq n$. Similar to Theorem \ref{thm.comppalkhpi}, it is straightforward to show: 
\begin{theorem}[Soundness and completeness]
Proof system $\SKhPALPiT$ is sound and complete over the class of all models. 
\end{theorem}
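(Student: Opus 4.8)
The plan is to mirror the proof of Theorem~\ref{thm.comppalkhpi}, reducing $\SKhPALPiT$ to $\SKhPALPi$ by provably eliminating the general tensor connectives and then invoking the completeness already established for $\SKhPALPi$.

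For soundness, every axiom and rule inherited from $\SKhPALPi$ remains valid over all models (cf.\ Theorem~\ref{thm.soundness}): the semantics of the connectives already present in $\LKhP$ is unchanged, and $\Kh$ still admits only non-$\Kh$ formulae in its scope, so in particular $\rRE$ and $\Ancirc$ (for $\bigcirc\in\{\land,\lor,\tensor,\to\}$) are untouched. The two new schemata $\gt$ and $\Khgt$ are valid by Proposition~\ref{prop.semantic tensor}. Hence $\SKhPALPiT$ is sound.

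For completeness, I would first prove the analogue of Lemma~\ref{lem.redkh}: every $\LKhPT$-formula $\phi$ is provably equivalent in $\SKhPALPiT$ to a $\Kh$-free $\LELPi$-formula $\phi'$. The reduction is carried out inside-out. Outside the scope of $\Kh$, the schema $\gt$ together with $\rRE$ rewrites every $\tensor^k_n(\phi_1,\dots,\phi_n)$ as $\bigvee_{|I|=k}\bigwedge_{i\in I}\phi_i$, so all general tensors occurring outside $\Kh$ disappear. Inside the scope of $\Kh$, where $\rRE$ is not available, $\Khgt$ rewrites $\Kh\tensor^k_n(\alpha_1,\dots,\alpha_n)$ as $\exists p_1\cdots\exists p_n\bigl(\K\tensor^k_n(p_1,\dots,p_n)\land\bigwedge_{i=1}^n\K[p_i]\Kh\alpha_i\bigr)$; here the reintroduced $\tensor^k_n$ sits outside every $\Kh$ (hence is removed by $\gt$), while $\K$, $\exists$, $[\cdot]$ are already in $\LKhP$ and the residual $\Kh$-formulae $\Kh\alpha_i$ carry strict subformulae of the original $\PLT$-formula. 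An induction on the structure of the $\PLT$-formula in the scope of $\Kh$ — also using the existing $\Kh$-reduction axioms $\Khbot,\KhC,\KhD,\KhI,\Kht$ and the provable equivalence of $\Kh p$ with $\K p$ — then shows every $\Kh$-subformula is provably equivalent to a $\Kh$-free one, after which the announcement-rank argument of Lemmata~\ref{lem.anrank} and~\ref{lem.redan} brings the whole formula into $\LELPi$. (Since $\Kh$-formulae never nest, $\rRE$ is always applicable to the output of $\Khgt$.) With this reduction in hand, completeness follows by the chain used for Theorem~\ref{thm.comppalkhpi}: $\vDash\phi \iff \vDash\phi'$ (the rewriting steps are semantically valid) $\iff \vdash_{\SFivePiplus}\phi'$ (Theorem~\ref{prop.s5+}) $\implies \vdash_{\SKhPALPiT}\phi'$ (as $\SFivePiplus\subseteq\SKhPALPiT$) $\iff \vdash_{\SKhPALPiT}\phi$ (provable equivalence).

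The step I expect to be the main obstacle is making the termination of this reduction precise. Because $\Khgt$ reintroduces a general tensor (over fresh propositional variables) together with $\K$, $\exists$ and $[\cdot]$, one has to fix a well-founded measure — essentially the structural complexity of the $\PLT$-subformula immediately under a $\Kh$, with announcement rank as a secondary measure for the $[\cdot]$-eliminations — and check that every application of $\gt$, $\Khgt$ and the remaining $\Kh$-reduction axioms either strictly decreases it or keeps the formula within an already-treated fragment. As with Lemmata~\ref{lem.anrank} and~\ref{lem.redan}, this is routine bookkeeping but calls for a careful inductive formulation.
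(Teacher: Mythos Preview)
Your proposal is correct and matches the paper's approach exactly: the paper only remarks that the result is ``straightforward'' and ``similar to Theorem~\ref{thm.comppalkhpi}'', i.e., soundness via Proposition~\ref{prop.semantic tensor} and completeness via the reduction axioms $\gt$, $\Khgt$ followed by the same chain through $\SFivePiplus$. Your extra care about termination (structural induction on the $\PLT$-formula under $\Kh$, with announcement rank as secondary measure) is precisely the bookkeeping the paper leaves implicit.
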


\subsection{Support semantics for $\tensor_n^k$}
We can now go back to the support semantics for $\tensor^k_n.$
\begin{definition}[Support for $\tensor_n^k$]
    $\M,s\Vdash \tensor^k_n(\alpha_1,\cdots,\alpha_n)$ iff there exist $n$ subsets $t_1,\cdots,t_n$ of $s$ such that for any $i\in [1,n]$, $\M,t_i\Vdash \alpha_i$ and any $w\in s\subseteq W_\M$ belongs to at least $k$ of $t_i$. 
\end{definition}
The support semantics for other connectives  stays the same as in Definition \ref{support}. 
 Let $\InqL^{\tensor^k_n}$ be the set of valid $\PLT$ formulae by the support semantics. We can show $\KhL^{\tensor^k_n}=\{\alpha\in\PLT | \vDash \Kh\alpha\}$ is exactly $\InqL^{\tensor^k_n}$, based on the following  generalization of Lemma \ref{lem.modelstate}. 
\begin{proposition}\label{prop.modelstateT}
For any $\alpha\in\PLT$, $\M,w\vDash\Kh\alpha\iff \M,W_\M\Vdash\alpha$.

\end{proposition}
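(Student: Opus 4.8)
The plan is to mimic exactly the proof of Lemma \ref{lem.modelstate}, proceeding by induction on the structure of $\alpha \in \PLT$. The cases for $p$, $\bot$, $\land$, $\lor$, $\to$ are literally the same as in \cite{Wang321} (they do not involve the generalized tensor at all), so the only new case is $\alpha = \tensor^k_n(\alpha_1,\dots,\alpha_n)$. First I would record the analogue of the ``model-to-state'' bridge: for any non-empty $U \subseteq W_\M$, $\M|_U \vDash \Kh\alpha_i$ iff $\R(U,\alpha_i) \neq \emptyset$ iff (by IH) $\M|_U, U \Vdash \alpha_i$; and for $U = \emptyset$, both sides hold trivially ($\R(\emptyset,\alpha_i) = \bigcap_{w\in\emptyset}\R(w,\alpha_i)$ is the whole resolution space, which is non-empty, and $\M,\emptyset \Vdash \alpha_i$ by Proposition \ref{prop.downward}). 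This lets me freely pass between $\Kh$-on-submodels and support-on-states.

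For the forward direction ($\Longrightarrow$): assume $\M,w \vDash \Kh\tensor^k_n(\alpha_1,\dots,\alpha_n)$. By Proposition \ref{prop.KhTeq} there is an $n$-tuple $(r_1,\dots,r_n)$ such that every $v \in W_\M$ lies in at least $k$ of the sets $U_i := \{v \in W_\M \mid r_i \in \R(v,\alpha_i)\}$. By construction $r_i \in \R(U_i,\alpha_i)$, so $\R(U_i,\alpha_i)\neq\emptyset$, hence (by the bridge above, using IH) $\M, U_i \Vdash \alpha_i$ for each $i$ — including when $U_i=\emptyset$. Since every $w \in W_\M$ belongs to at least $k$ of the $U_i$, the tuple $(U_1,\dots,U_n)$ witnesses $\M, W_\M \Vdash \tensor^k_n(\alpha_1,\dots,\alpha_n)$ by the support clause in Definition (support for $\tensor^k_n$).

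For the converse ($\Longleftarrow$): assume $\M, W_\M \Vdash \tensor^k_n(\alpha_1,\dots,\alpha_n)$, so there are $t_1,\dots,t_n \subseteq W_\M$ with $\M, t_i \Vdash \alpha_i$ and every $w \in W_\M$ in at least $k$ of the $t_i$. By IH and the bridge, $\R(t_i,\alpha_i)\neq\emptyset$ whenever $t_i\neq\emptyset$; pick $r_i \in \R(t_i,\alpha_i)$ in that case, and pick $r_i \in \S(\alpha_i)$ arbitrarily when $t_i=\emptyset$ (non-empty since resolution spaces are non-empty). Then for each $w$ and each $i$ with $w\in t_i$ we have $r_i \in \R(w,\alpha_i)$, so each $w$ satisfies $r_i\in\R(w,\alpha_i)$ for at least $k$ values of $i$; by Proposition \ref{prop.KhTeq} this gives $\M,w\vDash\Kh\tensor^k_n(\alpha_1,\dots,\alpha_n)$. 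I do not anticipate a serious obstacle — the argument is a routine generalization of the $n=2$, $k=1$ case. The only points requiring a little care are the bookkeeping of the empty-$U_i$ (resp. empty-$t_i$) cases, where one must invoke non-emptiness of $\S(\alpha_i)$ and downward closure (Proposition \ref{prop.downward}) rather than the inductive hypothesis directly, and making sure the ``at least $k$'' counting condition is transported faithfully in both directions, which it is since the sets $U_i$ in one direction and $t_i$ in the other play symmetric roles.
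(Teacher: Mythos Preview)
Your proof is correct and follows the same inductive skeleton as the paper, but the paper takes a slightly different route in the $\tensor^k_n$ case: rather than invoking Proposition~\ref{prop.KhTeq} and constructing the witnessing sets $U_i$ (resp.\ resolutions $r_i$) by hand, the paper goes through the reduction formula $\Khgt$ (Proposition~\ref{prop.semantic tensor}), unfolding $\Kh\tensor^k_n(\alpha_1,\dots,\alpha_n)$ to the existence of $U_1,\dots,U_n$ such that each $v$ lies in at least $k$ of them and each non-empty $U_i$ has $\R(U_i,\alpha_i)\neq\emptyset$, and then applies the inductive hypothesis. The underlying construction is identical (your $U_i$'s and $t_i$'s play exactly the role of the paper's $U_i$'s and $t_i$'s), so the difference is really one of packaging: the paper has already done the work of matching resolutions to subsets in the proof of $\Khgt$, and simply cites that result, whereas you redo that matching inline via Proposition~\ref{prop.KhTeq}. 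Your version is more self-contained and avoids the detour through announcements and propositional quantifiers; the paper's version is shorter on the page because it reuses the reduction axiom.
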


\begin{proof}
Based on Lemma \ref{lem.modelstate}, we only consider the case of   $\tensor^k_n(\alpha_1,\cdots,\alpha_n)$ and write $\exists U$ for $\exists U\subseteq W_\M$ for brevity, similarly for $\exists t$. 
$$\begin{aligned}
& \M,w\vDash \Kh(\tensor^k_n(\alpha_1,\cdots,\alpha_n))\\ 
\iff & \M,w\vDash\exists p_1\cdots\exists p_n(\K\tensor_n^k(p_1,\cdots,p_n)\land\bigwedge_{i=1}^{n}\K[p_i]\Kh\alpha_i) \text{ (by Proposition \ref{prop.semantic tensor})}. \\
\iff & \exists U_1,\cdots,U_n, \forall v\in W_\M, v \text{ belongs to at least } k \text{ of } U_i \text{ and } \\
& \qquad \qquad \qquad \qquad \qquad \qquad \qquad  \forall i\in [1, n],   \text{ if } U_i\neq\emptyset \text{ then } \R(U_i,\alpha_i)\not=\emptyset.\\
\iff & \exists t_1,\cdots,t_n, \forall i\in [1,n]\  t_i\Vdash\alpha_i \text{ and }\forall v \in W_\M, v \text{ belongs to at least } k \text{ of } t_i. \\
\iff & \M,W_\M\Vdash\tensor^k_n(\alpha_1,\cdots,\alpha_n). 
\end{aligned}$$
\end{proof}






As shown in \cite{uniform-defi2017}, adding tensor does not increase the expressive power of inquisitive logic. In fact, adding all the general tensors also does not increase the expressive power of inquisitive logic. 

First, we extend the definition of realization in \cite{Ciardelli2011} to our new connectives. 

\begin{definition}[Realizations]\label{def.RR}
\
\begin{itemize}
	\item $\RR(p)=\{p\}$ for $p\in\Prop$
	\item $\RR(\bot)=\{\bot\}$
	\item $\RR(\alpha\vee\beta)=\RR(\alpha)\cup\RR(\beta)$
	\item $\RR(\alpha\wedge\beta)=\{\rho\wedge\sigma \mid \rho\in\RR(\alpha)$ and $\sigma \in\RR(\beta)\}$
	\item $\RR(\alpha\to\beta)=\{\bigwedge_{\rho\in\RR(\alpha)}(\rho\to f(\rho)) \mid f:\RR(\alpha)\to\RR(\beta)\}$
	\item $\RR(\tensor_n^k({\alpha_1,\cdots,\alpha_n}))=\{\neg\bigwedge_{\substack{I\subseteq\{1,2,\cdots,n\}\\ | I |=k}}\neg\bigwedge_{i\in I}\rho_i\mid \text{ for all $i$}, \rho_i\in\RR(\alpha_i)\}$
\end{itemize}
\end{definition}

Then we can generalize the Inquisitive normal form in \cite{Ciardelli2018,Ciardelli2011}. 

\begin{proposition}[Normal form]\label{prop.INF}
For any $\alpha\in\PLT$, $s\Vdash \alpha$ iff $s\Vdash\bigvee_{\rho\in\RR(\alpha)}\rho$. 
\end{proposition}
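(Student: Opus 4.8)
The plan is to prove the normal form result by induction on the structure of $\alpha\in\PLT$. The base cases ($p$ and $\bot$) are immediate since $\RR(p)=\{p\}$ and $\RR(\bot)=\{\bot\}$, so $\bigvee_{\rho\in\RR(\alpha)}\rho$ is literally $\alpha$ itself. For the connectives $\land$, $\lor$, $\to$ already present in $\InqL$, the argument is exactly as in \cite{Ciardelli2011,Ciardelli2018}: one uses the support clauses from Definition \ref{support} together with the standard fact that each $\rho\in\RR(\gamma)$ is a \emph{classical} (inquisitive-disjunction-free) formula, hence $s\Vdash\rho$ iff $\rho$ is classically true at every world of $s$, and that $\bigvee$ distributes appropriately over the other connectives at the level of support. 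So the only genuinely new work is the inductive case for $\tensor^k_n$.

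For the $\tensor^k_n$ case, I would argue as follows. By the induction hypothesis, $s\Vdash\alpha_i$ iff $s\Vdash\bigvee_{\rho\in\RR(\alpha_i)}\rho$, i.e.\ iff there is some $\rho_i\in\RR(\alpha_i)$ with $s\Vdash\rho_i$ (using the support clause for $\lor$). Unfolding the support clause for $\tensor^k_n$ (Definition \ref{support} for $\tensor^k_n$): $s\Vdash\tensor^k_n(\alpha_1,\dots,\alpha_n)$ iff there exist $t_1,\dots,t_n\subseteq s$ with each $t_i\Vdash\alpha_i$ and each $w\in s$ lying in at least $k$ of the $t_i$. Using the IH on each $t_i$ we may replace $t_i\Vdash\alpha_i$ by: there is $\rho_i\in\RR(\alpha_i)$ with $t_i\Vdash\rho_i$, and since $\rho_i$ is classical we may as well take $t_i=\{w\in s\mid w\Vdash\rho_i\}$ (the largest subteam supporting $\rho_i$, by downward closure — Proposition \ref{prop.downward}). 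Then the covering condition ``every $w\in s$ is in at least $k$ of the $t_i$'' becomes exactly ``every $w\in s$ satisfies at least $k$ of the classical formulas $\rho_1,\dots,\rho_n$'', i.e.\ $w$ satisfies $\bigwedge_{i\in I}\rho_i$ for some $I$ of size $k$, i.e.\ $w\Vdash\bigvee_{|I|=k}\bigwedge_{i\in I}\rho_i$, i.e.\ $w\Vdash\neg\bigwedge_{|I|=k}\neg\bigwedge_{i\in I}\rho_i$ (the two are classically equivalent, and classical formulas are evaluated world-by-world). Since this holds for every $w\in s$ and the displayed formula is classical, that is precisely $s\Vdash\neg\bigwedge_{|I|=k}\neg\bigwedge_{i\in I}\rho_i$, which is one of the realizations in $\RR(\tensor^k_n(\alpha_1,\dots,\alpha_n))$ by Definition \ref{def.RR}. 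Conversely, given any realization $\rho=\neg\bigwedge_{|I|=k}\neg\bigwedge_{i\in I}\rho_i$ of the tensor with $s\Vdash\rho$, define $t_i=\{w\in s\mid w\Vdash\rho_i\}$; downward closure gives $t_i\Vdash\rho_i$ hence $t_i\Vdash\alpha_i$ by IH, and $s\Vdash\rho$ forces every $w\in s$ into at least $k$ of the $t_i$, so $s\Vdash\tensor^k_n(\alpha_1,\dots,\alpha_n)$. Chaining the two directions over all realizations $\rho_i$ yields $s\Vdash\tensor^k_n(\alpha_1,\dots,\alpha_n)$ iff $s\Vdash\bigvee_{\rho\in\RR(\tensor^k_n(\vec\alpha))}\rho$.

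The step I expect to be the main obstacle — or at least the one needing the most care — is the bookkeeping that lets me pass from ``there exist subteams $t_i$'' to ``take $t_i$ to be the canonical classical truth-set of $\rho_i$'': one must check that enlarging each $t_i$ to the full set $\{w\in s\mid w\Vdash\rho_i\}$ preserves both $t_i\Vdash\rho_i$ (fine, since $\rho_i$ is classical and hence \emph{upward} closed on singletons-unions — really it is flat) and the covering count (fine, since enlarging sets only helps the ``at least $k$'' condition), and that shrinking back is never needed. A secondary point to state explicitly is the classical equivalence $\bigvee_{|I|=k}\bigwedge_{i\in I}\rho_i \equiv \neg\bigwedge_{|I|=k}\neg\bigwedge_{i\in I}\rho_i$ and the remark that for classical $\gamma$, $s\Vdash\gamma$ iff $w\Vdash\gamma$ for all $w\in s$ — this is what licenses moving the quantifier over worlds inside and outside the support relation freely. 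Everything else is a routine unwinding of definitions, and the non-tensor cases can simply be cited from \cite{Ciardelli2011,Ciardelli2018}.
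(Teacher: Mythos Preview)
Your proposal is correct. Note, however, that the paper does not actually give a proof of this proposition: it simply states the result as a generalization of the inquisitive normal form and refers the reader to \cite{Ciardelli2018,Ciardelli2011}. Your argument is precisely the expected generalization---structural induction, with the only new case being $\tensor^k_n$---and your handling of that case is sound: realizations are $\lor$-free and hence flat, so enlarging each $t_i$ to the full truth-set of $\rho_i$ inside $s$ is harmless both for support and for the covering count, and the passage between ``every $w\in s$ lies in at least $k$ of the $t_i$'' and $s\Vdash\neg\bigwedge_{|I|=k}\neg\bigwedge_{i\in I}\rho_i$ is exactly flatness plus the classical De Morgan equivalence at singletons. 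There is nothing to correct; your write-up simply supplies the details the paper elides.
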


\begin{theorem}
The languages of $\InqL$ and $\InqL^{\tensor^k_n}$ are equally expressive. 
\end{theorem}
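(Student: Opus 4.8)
The plan is to show that every $\PLT$-formula is equivalent (with respect to the support semantics) to an $\InqL$-formula, and conversely — the latter direction being trivial since $\InqL$ is a syntactic fragment of $\PLT$ once we note that $\InqL$ formulas are just $\PLT$ formulas without any $\tensor^k_n$. So the real content is the forward inclusion. I would carry this out via the normal form result already in hand, namely Proposition \ref{prop.INF}: for any $\alpha\in\PLT$, $s\Vdash\alpha$ iff $s\Vdash\bigvee_{\rho\in\RR(\alpha)}\rho$.

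First I would observe that each realization $\rho\in\RR(\alpha)$ is, by inspection of Definition \ref{def.RR}, a \emph{classical} formula: the base cases $p$ and $\bot$ are classical, and the inductive clauses for $\wedge$, $\to$, and $\tensor^k_n$ build new formulas only out of $\wedge$, $\to$, $\neg$ (which is $\cdot\to\bot$), while the clause for $\vee$ merely takes a union of realization sets without introducing $\vee$ into the individual realizations. In particular the realization clause for $\tensor^k_n$ produces $\neg\bigwedge_{|I|=k}\neg\bigwedge_{i\in I}\rho_i$, which contains no inquisitive disjunction and no generalized tensor — it is a Boolean combination of the $\rho_i$. Hence $\RR(\alpha)$ is always a finite set of $\tensor^k_n$-free, $\vee$-free formulas, i.e., formulas built from $p,\bot,\wedge,\to$ only, which are in particular legitimate $\InqL$-formulas (indeed classical formulas, hence flat). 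Therefore $\bigvee_{\rho\in\RR(\alpha)}\rho$ is an $\InqL$-formula, and by Proposition \ref{prop.INF} it is support-equivalent to $\alpha$. This establishes that every $\PLT$-formula has an $\InqL$-equivalent; combined with the trivial converse we conclude the two languages are equally expressive.

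For the statement to be fully rigorous I would also need to confirm that Proposition \ref{prop.INF} indeed applies to \emph{all} $\tensor^k_n$ with $n\geq 2$, $1\leq k\leq n$ simultaneously — that is, that the normal form is computed by a single induction handling every generalized tensor connective — which is exactly what its statement asserts. Given that, the argument above is essentially a bookkeeping check on the shape of realizations. The main obstacle, such as it is, is not in the expressivity argument itself but in making sure the realization clause for $\tensor^k_n$ in Definition \ref{def.RR} is semantically correct, i.e., that $s\Vdash\tensor^k_n(\alpha_1,\dots,\alpha_n)$ iff $s\Vdash\bigvee_{\text{for all }i,\ \rho_i\in\RR(\alpha_i)}\neg\bigwedge_{|I|=k}\neg\bigwedge_{i\in I}\rho_i$; but this is subsumed by Proposition \ref{prop.INF}, which I am entitled to assume. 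So I expect the proof to be short: invoke Proposition \ref{prop.INF}, check that realizations are $\vee$- and $\tensor^k_n$-free, conclude.
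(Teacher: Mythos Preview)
Your proposal is correct and follows exactly the paper's approach: invoke the normal form of Proposition~\ref{prop.INF} and observe that each realization $\rho\in\RR(\alpha)$ is built only from $p,\bot,\wedge,\to,\neg$ and hence lies in $\InqL$. The paper compresses this to a one-line proof (``$\alpha$ is equivalent to a disjunction of some $\rho$ without general tensors''), while you spell out the inspection of Definition~\ref{def.RR} that justifies it.
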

\begin{proof}
By Proposition \ref{prop.INF}, for any $\alpha\in\PLT$, $\alpha$ is equivalent to a disjunction of some $\rho$ without general tensors. 
\end{proof}

In \cite{YangV16}, it is shown that the variants of propositional dependence logics $\PD$, $\PDV$, $\PID$, $\InqL$ are all equally expressive. Similarly, adding general tensors to these logics will also not increase the expressive power. 
\begin{corollary}
Adding general tensors to $\PD$, $\PDV$, $\PID$ or $\InqL$ does not increase their expressive power. 
\end{corollary}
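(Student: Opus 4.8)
The plan is to reduce the claim to the normal-form result of Proposition~\ref{prop.INF} together with the expressive equivalences of \cite{YangV16}. Fix one of $\PD,\PDV,\PID$ and write $\mathbf{L}^{\tensor^k_n}$ for that logic extended with all the connectives $\tensor^k_n$ ($n\geq 2$, $1\leq k\leq n$). Since $\mathbf{L}^{\tensor^k_n}$ syntactically contains $\mathbf{L}$, one direction of the comparison is trivial, so it suffices to show that every $\mathbf{L}^{\tensor^k_n}$-formula is equivalent, on every state in every model, to an $\InqL$-formula: then, using $\mathbf{L}\equiv\InqL$ from \cite{YangV16}, we can translate it back into $\mathbf{L}$ and obtain $\mathbf{L}^{\tensor^k_n}\equiv\mathbf{L}$. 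Note that no purely syntactic elimination of $\tensor^k_n$ is available (the paper itself observes that most general tensors are not uniformly definable), so the argument has to go through the semantic normal form.

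First I would eliminate the logic-specific primitives. In each of $\PD,\PDV,\PID$ the only primitives not already among $p,\bot,\wedge,\vee,\to,\tensor^k_n$ are the dependence atoms $\dep(p_1,\dots,p_m,q)$ — recall the intuitionistic implication of $\PID$ is precisely the $\to$ of the support semantics of Definition~\ref{support}. Replacing each dependence atom by its standard support-equivalent $\bigwedge_i(p_i\vee\neg p_i)\to(q\vee\neg q)$ turns any $\mathbf{L}^{\tensor^k_n}$-formula into an equivalent $\PLT$-formula (with $\neg\alpha:=\alpha\to\bot$). Now I would apply Proposition~\ref{prop.INF}: the resulting $\PLT$-formula $\alpha$ is equivalent to $\bigvee_{\rho\in\RR(\alpha)}\rho$, and by Definition~\ref{def.RR} every realization $\rho\in\RR(\alpha)$ is built only from $p,\bot,\wedge,\vee,\to$, hence is an $\InqL$-formula; so is the whole disjunction. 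This places the original formula within the expressive range of $\InqL$, which is what was needed. An equivalent route, matching the "Similarly'' in the text, is to add directly to Definition~\ref{def.RR} a clause $\RR(\dep(\bar p,q))=\RR\big(\bigwedge_i(p_i\vee\neg p_i)\to(q\vee\neg q)\big)$ and re-run the normal-form induction of Proposition~\ref{prop.INF}, making explicit that the realizations of every formula of these extended logics are ordinary $\InqL$-formulas.

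The step needing the most care is purely bookkeeping: checking, logic by logic, that once the dependence atoms (and, in $\PDV$ and $\PID$, the inquisitive disjunction and the intuitionistic implication) are accounted for, the formula genuinely lies in the fragment $\PLT$ to which Proposition~\ref{prop.INF} applies, and that the normal form $\bigvee_{\rho\in\RR(\alpha)}\rho$ — which may reintroduce $\vee$ and $\to$ not primitive in, say, $\PD$ — is still translatable back into the target logic via \cite{YangV16}. No new semantic argument is required beyond Proposition~\ref{prop.INF} and the realization clauses already in the excerpt, and the corollary then follows for all four logics at once, since the support semantics for the remaining connectives is left unchanged.
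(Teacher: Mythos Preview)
Your proposal is correct and matches the paper's approach: the paper gives no explicit proof beyond the word ``Similarly'' pointing to Proposition~\ref{prop.INF} and the equivalences of \cite{YangV16}, and your elaboration --- translate dependence atoms (and negated atoms) into $\PLT$, apply the normal form to land in $\InqL$, then use \cite{YangV16} to return to the base logic --- is precisely that argument spelled out. The alternative route you mention, extending Definition~\ref{def.RR} with a clause for $\dep(\bar p,q)$ and rerunning the induction, is also exactly what the paper's ``Similarly'' gestures at.
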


\subsection{Uniform Definability of general tensors}\label{T23}
It is natural to ask whether the generalized tensors are uniformly definable by the standard binary tensor $\tensor$. In \cite{CiardelliB19}, it is proved that $\tensor$ is not uniformly definable in $\InqL$. Inspired by the techniques in \cite{CiardelliB19}, we will show in Theorem \ref{thm.definegt} that
all the $\tensor^k_n$ are \textbf{not} uniformly definable in $\InqL^\tensor$ except $\tensor^1_n$ and $\tensor^n_n$, where $1\leq k \leq n$ and $2\leq n$.

First, we show that $\tensor^n_n$ is a trivial conjunction, $\tensor^1_n$ can be uniformly defined by $\tensor^1_2$, and by using $\top$ or $\bot$, some general tensor can be uniformly defined by others. 

\begin{proposition}\label{prop.TBred}
For any $\alpha_1,\cdots,\alpha_n\in\InqL^{\tensor^k_n}$, there are following properties: 
\begin{itemize}
    \item[(1)] For any $n\geq 2$ and any state $s$, $s\Vdash\tensor^n_n(\alpha_1,\cdots,\alpha_n)\iff s\Vdash\bigwedge_{i=1}^n\alpha_i$. 
    \item[(2)] For any $n\geq 3$ and any state $s$, $s\Vdash\tensor^1_n(\alpha_1,\cdots,\alpha_n)\iff s\Vdash\tensor^1_2(\tensor^1_{n-1}(\alpha_1,\cdots,\alpha_{n-1}),\alpha_n)$. 
    \item[(3)] For any $n\geq 3$, $1\leq k\leq n$ and any state $s$, $s\Vdash\tensor^k_n(\alpha_1,\cdots,\alpha_{n-1},\top)\iff s\Vdash\tensor^{k-1}_{n-1}(\alpha_1,\cdots,\alpha_{n-1})$. 
    \item[(4)] For any $n\geq 3$, $1\leq k\leq n-1$ and any state $s$, $s\Vdash\tensor^k_n(\alpha_1,\cdots,\alpha_{n-1},\bot)\iff s\Vdash\tensor^{k}_{n-1}(\alpha_1,\cdots,\alpha_{n-1})$. 
\end{itemize}
\end{proposition}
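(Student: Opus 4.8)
The plan is to prove the four equivalences directly from the support semantics of $\tensor^k_n$ (Definition in the previous subsection), unwinding the quantification over covering families $t_1,\dots,t_n$ in each case. Throughout I would work with a fixed model $\M$ and state $s$, and recall that the empty state supports every formula (Proposition \ref{prop.downward}) and that support is downward closed.

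For (1): I would observe that $\M,s\Vdash\tensor^n_n(\alpha_1,\dots,\alpha_n)$ requires subsets $t_1,\dots,t_n\subseteq s$ with each $w\in s$ lying in at least $n$ of the $t_i$, i.e.\ in \emph{all} of them; hence $s\subseteq t_i$ for every $i$, forcing $t_i=s$, and the condition collapses to $\M,s\Vdash\alpha_i$ for all $i$, which is exactly $\M,s\Vdash\bigwedge_{i=1}^n\alpha_i$. For (2): from left to right, given a cover $t_1,\dots,t_n$ for $\tensor^1_n$ (each $w\in s$ in at least one $t_i$), I would set $u=t_1\cup\dots\cup t_{n-1}$; then $u,t_n$ cover $s$, $\M,u\Vdash\tensor^1_{n-1}(\alpha_1,\dots,\alpha_{n-1})$ witnessed by $t_1,\dots,t_{n-1}$ (these are subsets of $u$ and cover $u$ in the $k=1$ sense by construction — this uses that they cover $s\supseteq u$... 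I must be careful here), and $\M,t_n\Vdash\alpha_n$, giving the right-hand side; from right to left I would flatten the nested witnesses back into an $n$-cover. The subtlety in (2) is that a family covering $s$ need not cover the union $u$ correctly unless $u\subseteq s$, which holds since each $t_i\subseteq s$; so $u\subseteq s$ and $\{t_i\}_{i<n}$ covers $u$ because every $w\in u$ lies in some $t_i$ by definition of union — this is fine, and symmetric care handles the converse.

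For (3): noting $\top$ is supported by every state (so we may always take $t_n=s$ with no cost), $\M,s\Vdash\tensor^k_n(\alpha_1,\dots,\alpha_{n-1},\top)$ asks for $t_1,\dots,t_n\subseteq s$ with each $w\in s$ in at least $k$ of them; since taking $t_n=s$ is free and optimal, this is equivalent to: each $w\in s$ lies in at least $k-1$ of $t_1,\dots,t_{n-1}$, i.e.\ $\M,s\Vdash\tensor^{k-1}_{n-1}(\alpha_1,\dots,\alpha_{n-1})$. For (4): dually, $\bot$ is supported only by $\emptyset$, so $t_n=\emptyset$ is forced and contributes nothing, hence each $w\in s$ must lie in at least $k$ of $t_1,\dots,t_{n-1}$, which is $\M,s\Vdash\tensor^{k}_{n-1}(\alpha_1,\dots,\alpha_{n-1})$; here one uses $k\leq n-1$ so the requirement is still satisfiable.

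The only real obstacle is the bookkeeping in (2): one must check both that merging $t_1,\dots,t_{n-1}$ into their union yields a legitimate witness for the inner $\tensor^1_{n-1}$ relative to that union (using downward closure and $t_i\subseteq s$), and that in the converse direction the witness $u$ for the inner tensor, together with its own sub-witnesses, can be re-expanded to an $n$-family covering $s$ in the $k=1$ sense — again routine but needing that every $w\in s$ is covered either via $t_n$ or via the inner family. Cases (1), (3), (4) are essentially immediate once one exploits that $\top$ and $\bot$ pin down the last component.
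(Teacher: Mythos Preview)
Your proposal is correct and follows essentially the same route as the paper: all four items are proved by directly unwinding the support semantics of $\tensor^k_n$, exploiting that $\top$ is supported by every state (so one may take $t_n=s$) and that only $\emptyset$ supports $\bot$ (forcing $t_n=\emptyset$). Your treatment of (2) is in fact slightly more thorough than the paper's, which only spells out the left-to-right direction, whereas you also sketch the converse flattening; the bookkeeping worries you flag are harmless, since $t_i\subseteq u$ and the $t_i$ cover $u$ both hold trivially by the definition of $u$ as their union.
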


\begin{proof}
\begin{itemize}
    \item[(1)] For any $n\geq 2$ and any state $s$, $s\Vdash\tensor^n_n(\alpha_1,\cdots,\alpha_n)$ iff $\exists t_1,\cdots,t_n\subseteq s, \forall i\in[1,n], t_i\Vdash\alpha_i$, and for any $w\in s, w$ belongs to $n$ of $t_i$. So $w$ belongs to all the $t_i$, which means that $t_i=s$ for all $i\in[1,n]$. Hence, for all $i\in[1,n]$ we have $s\Vdash\alpha_i$, which is equivalent to $s\Vdash\bigwedge_{i=1}^n\alpha_i$. 
    \item[(2)] For any $n\geq 3$, $1\leq k\leq n-1$ and any state $s$, $s\Vdash\tensor^1_n(\alpha_1,\cdots,\alpha_n)$ iff $\exists t_1,\cdots,t_n\subseteq s,\forall i\in[1,n], t_i\Vdash\alpha_i$, and $\bigcup_{i=1}^n t_i=s$. Then it is obvious that $\bigcup_{i=1}^{n-1}t_i\Vdash\tensor^1_{n-1}(\alpha_1,\cdots,\alpha_{n-1})$ and $t_n\Vdash\alpha_n$, hence $s\Vdash\tensor^1_2(\tensor^1_{n-1}(\alpha_1,\cdots,\alpha_{n-1}),\alpha_n)$. 
    \item[(3)] For any $n\geq 3$ and any state $s$, $s\Vdash\tensor^k_n(\alpha_1,\cdots,\alpha_{n-1},\top)$ iff $\exists t_1,\cdots,t_{n-1},t_n\subseteq s, \forall i\in[1,n-1], t_i\Vdash \alpha_i$ and $t_n\Vdash\top$, and for any $w\in s$, $w$ belongs to at least $k$ of $t_i$. 
    
    Since $s\Vdash\top$ is trivially true, we can assume $t_n=s$, then the condition is equivalent to $\exists t_1,\cdots,t_{n-1}\subseteq s, \forall i\in[1,n-1], t_i\Vdash \alpha_i$, and for any $w\in s$, $w$ belongs to at least $k-1$ of $t_1,\cdots,t_{n-1}$. Hence, it is equivalent to $s\Vdash\tensor^{k-1}_{n-1}(\alpha_1,\cdots,\alpha_{n-1})$. 
    
    \item[(4)] For any $n\geq 3$, $1\leq k\leq n$ and any state $s$, $s\Vdash\tensor^k_n(\alpha_1,\cdots,\alpha_{n-1},\bot)$ iff $\exists t_1,\cdots,t_{n-1},t_n\subseteq s, \forall i\in[1,n-1], t_i\Vdash \alpha_i$ and $t_n\Vdash\bot$, and for any $w\in s$, $w$ belongs to at least $k$ of $t_i$. 
    
    Since only $\emptyset\Vdash\bot$, so we can assume $t_n=\emptyset$,
    then the condition is equivalent to $\exists t_1,\cdots,t_{n-1}\subseteq s, \forall i\in[1,n-1], t_i\Vdash \alpha_i$, and for any $w\in s$, $w$ belongs to at least $k$ of $t_1,\cdots,t_{n-1}$. Hence, it is equivalent to $s\Vdash\tensor^{k}_{n-1}(\alpha_1,\cdots,\alpha_{n-1})$. 
\end{itemize}
\end{proof}


There are some definitions about uniform definability from \cite{uniform-defi2017} as below. 

\begin{definition}[Context]
A context for a propositional logic $\mathcal{L}$ is an $\mathcal{L}$-formula $\phi(p_1,\cdots,p_n)$ with distinguished atoms $p_1,\cdots,p_n$, and it is also allowed to contain other atoms besides $p_1,\cdots,p_n$. For any $L$-formulae $\psi_1,\cdots,\psi_n$, we write $\phi(\psi_1,\cdots,\psi_n)$ for the formula $\phi(\psi_1/p_1,\cdots,\psi_n/p_n)$. 
\end{definition}

\begin{definition}[Uniform definability]
In a language $\mathcal{L}$, we say that an n-ary connective $\odot$ is uniformly definable if there exists a context $\zeta(p_1,\cdots,p_n)$ such that for all $\chi_1,\cdots,\chi_n\in\mathcal{L}$: $\odot(\chi_1,\cdots,\chi_n)$ is equivalent to $\zeta(\chi_1,\cdots,\chi_n)$. 

\end{definition}

In order to show that $\tensor^2_3$ is not uniformly definable, we consider equivalence relativized to a state $s$. 

\begin{definition}[Relativized equivalence \cite{CiardelliB19}]
Let $s$ be a state in $\M$ and $\phi,\psi\in \PLT$. We say that $\phi$ and $\psi$ are equivalent relativized to $s$, $\phi\equiv_s\psi$ iff for all states $t\subseteq s$, $t\Vdash \phi\iff t\Vdash\psi$.  
\end{definition}
Note that if $\varphi$ and $\psi$ are equivalent then they are equivalent relativized to any state $s$. 

Consider $\psi=p_1\vee p_2\vee p_3\vee p_4$ and $s=\{w_{12},w_{13},w_{14},w_{23},w_{24},w_{34}\}$ where only $p_i,  p_j$ are true in $w_{ij}$ and all of other propositional letters are false. Now, we show that relativized to this state $s$, $\tensor^2_3$ can't be uniformly defined by any context in $\InqL^\tensor$. 

\begin{lemma}\label{lem.T23}
For any context $\phi(p_0)$, with $\phi\in \PL$ not containing $p_1,p_2,p_3,p_4$, $\phi(\psi/p_0)$ would be equivalent to $\bot,\psi,\tensor^1_2(\psi,\psi)$ or $\top$ relativized to $s$. 
\end{lemma}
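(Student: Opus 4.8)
The plan is to analyze the behavior of an arbitrary single-variable context $\phi(p_0)$ when its placeholder is filled with $\psi = p_1 \vee p_2 \vee p_3 \vee p_4$, working entirely relative to the fixed state $s = \{w_{12}, w_{13}, w_{14}, w_{23}, w_{24}, w_{34}\}$. The key observation driving the argument is that since $\phi$ does not mention $p_1, p_2, p_3, p_4$, and every world in $s$ makes all other atoms false, the behavior of $\phi(\psi/p_0)$ on any substate $t \subseteq s$ depends only on how $\psi$ (equivalently $p_0$) is supported across the subsets of $t$. So the right move is to classify, for each $t \subseteq s$, the "support profile" of $\psi$ on $t$: which subsets $u \subseteq t$ satisfy $\M, u \Vdash \psi$. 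Since $\psi$ is a flat (classical) disjunction, $u \Vdash \psi$ iff every world in $u$ satisfies some $p_i$ — but each $w_{ij} \in s$ satisfies exactly $p_i$ and $p_j$, so actually $u \Vdash \psi$ for \emph{every} $u \subseteq s$. Wait — that means $\psi \equiv_s \top$ already, which would collapse the lemma; the real content must be that we only care about the \emph{downward-closed} structure, and the distinctions among $\bot, \psi, \tensor^1_2(\psi,\psi), \top$ on substates of $s$ come from a finer invariant. I would recompute: relative to $s$, we need the equivalence classes of $\PL$-formulas under $\equiv_s$ that are reachable from $p_0 \mapsto \psi$ via contexts, and the claim is there are exactly four, represented by those formulas.

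**First I would** set up the induction on the structure of $\phi$. The base cases are $\phi = p_0$ (giving $\psi$), $\phi = \bot$ or $\phi = q$ for $q \neq p_0$ not among $p_1,\dots,p_4$ (giving $\bot$ relative to $s$, since such $q$ is false everywhere in $s$), and $\phi = \top$. For the inductive step I would show that the set $\{\bot, \psi, \tensor^1_2(\psi,\psi), \top\}$ (modulo $\equiv_s$) is \emph{closed} under $\wedge, \vee, \to$, and $\tensor = \tensor^1_2$ — i.e., it forms a finite subalgebra of the Lindenbaum algebra relativized to $s$. This requires computing all the relevant operations on these four elements. The crucial facts: $\top \wedge x = x$, $\bot \vee x = x$, $\bot \wedge x = \bot$, $\top \vee x = \top$, $x \to x = \top$, and then the genuinely substantive computations involve $\psi \wedge \psi = \psi$, $\psi \vee \psi = \psi$, $\psi \to \psi = \top$, $\tensor^1_2(\psi,\psi) \vee \tensor^1_2(\psi,\psi) \equiv_s \tensor^1_2(\psi,\psi)$, $\psi \vee \tensor^1_2(\psi,\psi) \equiv_s \tensor^1_2(\psi,\psi)$ (since $\psi \Vdash \psi \tensor \psi$ always), $\tensor^1_2(\psi,\psi) \wedge \psi$, $\psi \to \tensor^1_2(\psi,\psi)$, $\tensor^1_2(\psi,\psi) \to \psi$, and $\tensor^1_2$ applied to pairs drawn from the four-element set. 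I would verify each by directly unfolding the support clauses on substates $t \subseteq s$, using that $\tensor^1_2(\psi,\psi)$ on $t$ means $t$ splits into two pieces each supporting $\psi$ — which, given the structure of $s$, is a nontrivial condition distinguishing $\tensor^1_2(\psi,\psi)$ from both $\psi$ and $\top$ on suitable substates.

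**The hard part will be** pinning down exactly why $\tensor^1_2(\psi,\psi)$ is a genuinely distinct fourth element (not equivalent to $\psi$ or $\top$ relative to $s$) and why nothing outside these four can be generated — in particular, that $\to$ never escapes the set. The delicate computations are $\psi \to \tensor^1_2(\psi,\psi)$ and $\tensor^1_2(\psi,\psi) \to \psi$: I expect $\psi \to \tensor^1_2(\psi,\psi) \equiv_s \top$ (trivially, as $\psi$ implies its own tensor-doubling) and $\tensor^1_2(\psi,\psi) \to \psi$ to land on $\psi$ or $\tensor^1_2(\psi,\psi)$ — this needs a careful argument that for every $t \subseteq s$ with $t \Vdash \tensor^1_2(\psi,\psi) \to \psi$, one gets $t \Vdash \psi$ (or the appropriate tensor), exploiting downward closure and the combinatorics of which $w_{ij}$'s lie in $t$. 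I would organize this as a finite case table once the four elements and their four values on a canonical family of test substates are fixed, and check closure entry by entry. The payoff — deferred to the theorem using this lemma — is that no context built this way can equal $\tensor^2_3(p_1 \vee p_2 \vee p_3 \vee p_4, \dots)$ on $s$, because one computes directly that $\tensor^2_3$ of the relevant arguments is $\equiv_s$-distinct from all of $\bot, \psi, \tensor^1_2(\psi,\psi), \top$.
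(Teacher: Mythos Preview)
Your overall plan—induction on the structure of $\phi$, showing that $\{\bot,\ \psi,\ \psi\tensor\psi,\ \top\}$ is closed under $\wedge,\vee,\to,\tensor$ modulo $\equiv_s$—is exactly the paper's approach, and the checklist of cases you sketch is the right one.

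There is, however, a genuine error that you notice but then misdiagnose. In this paper $\vee$ is the \emph{inquisitive} disjunction, not the classical one: by the support clause, $u\Vdash p_1\vee p_2\vee p_3\vee p_4$ requires a \emph{single} index $i$ with $u\Vdash p_i$, i.e.\ every world of $u$ must satisfy that same $p_i$. So $\psi$ is not flat; for instance $\{w_{12},w_{34}\}\not\Vdash\psi$ since these two worlds share no atom. This is precisely why $\psi\not\equiv_s\top$, and why $\psi\tensor\psi$ (which says ``$t$ can be covered by \emph{two} atoms'') sits strictly between $\psi$ and $\top$ on $s$. There is no hidden ``finer invariant'': once you read $\vee$ correctly, the four formulas are linearly ordered by entailment on all substates of $s$, the $\wedge,\vee,\to$ closure checks become routine (returning the minimum, the maximum, or $\top$; in particular $(\psi\tensor\psi)\to\psi\equiv_s\psi$), and the only computations with real content are $\psi\tensor(\psi\tensor\psi)\equiv_s\top$ and $(\psi\tensor\psi)\tensor(\psi\tensor\psi)\equiv_s\top$, which hold because any three of $p_1,\dots,p_4$ already cover every $w_{ij}\in s$ (each such world satisfies two of the four atoms).
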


\begin{proof}


First we notice for any state $t$, $t\Vdash\bot\Rightarrow t\Vdash\psi\Rightarrow t\Vdash\tensor^1_2(\psi,\psi)\Rightarrow t\Vdash\top$ ($\star$).


Then we prove by induction on $\phi$. For short, we write $\phi^*$ for $\phi(\psi/p_0)$: 
\begin{itemize}
    \item For $\phi=\bot$ or $\phi=p$ with $p\neq p_0$: Since we assume that $p_1,p_2,p_3,p_4$ are not in $\varphi$, so $p$ is different from them. Hence, it is obvious that $\phi^*\equiv_s\bot$. 
    \item For $\phi=p_0$: It is obvious that $\phi^*\equiv_s\psi$. 
    \item For $\phi=\phi_1\wedge\phi_2$: so $\phi^*=\phi_1^*\wedge\phi_2^*$ and $t\Vdash\phi_1^*\wedge\phi_2^*$ iff $t\Vdash\phi_1^*$ and $t\Vdash\phi_2^*$. By IH, $\phi_1^*$ and $\phi_2^*$ are both equivalent to one of $\bot,\psi,\tensor^1_2(\psi,\psi)$, $\top$. Since we have ($\star$) and that  $t\Vdash\chi_1\Rightarrow t\Vdash\chi_2$ implies $t\Vdash \chi_1\land\chi_2\iff t\Vdash\chi_1$, it is obvious that $\phi^*$ is also equivalent to one of $\bot,\psi,\tensor^1_2(\psi,\psi)$, $\top$ in $s$. 
    
    \item For $\phi=\phi_1\vee\phi_2$: so $\phi^*=\phi_1^*\vee\phi_2^*$ and $t\Vdash\phi_1^*\vee\phi_2^*$ iff $t\Vdash\phi_1^*$ or $t\Vdash\phi_2^*$. Similarly, we have ($\star$) and  that $t\Vdash\chi_1\Rightarrow t\Vdash\chi_2$ implies $t\Vdash \chi_1\lor\chi_2\iff t\Vdash\chi_2$. Obviously $\phi^*$ is equivalent to one of $\bot,\psi,\tensor^1_2(\psi,\psi)$, $\top$ in $s$. 
    \item For $\phi=\phi_1\to\phi_2$: so $\phi^*=\phi_1^*\to\phi_2^*$, and $t\Vdash\phi_1^*\to\phi_2^*$ iff for any $t'\subseteq t$, $t'\Vdash\phi_1^*$ implies $t'\Vdash\phi_2^*$. Since we have ($\star$), we could know that: 
    \begin{itemize}
        \item $\bot\to\bot$, $\bot\to\psi$, $\bot\to\tensor^1_2(\psi,\psi)$, $\bot\to\top$, $\psi\to\psi$, $\psi\to\tensor^1_2(\psi,\psi)$, $\psi\to\top$, $\tensor^1_2(\psi,\psi)\to\tensor^1_2(\psi,\psi)$, $\tensor^1_2(\psi,\psi)\to\top$ and $\top\to\top$ are all equivalent to $\top$ in $s$. Also, if $\psi\lra\bot$, then $\psi\to\bot$ and  $\tensor^1_2(\psi,\psi)\to\bot$ are equivalent to $\top$ in $s$.
        \item If $\psi\not\lra\bot$, $\psi\to\bot$, $\tensor^1_2(\psi,\psi)\to\bot$ and $\top\to\bot$ are all equivalent to $\bot$ in $s$. 
        \item $\tensor^1_2(\psi,\psi)\to\psi$ and $\top\to\psi$ are equivalent to $\psi$ in $s$.
        \item $\top\to\tensor^1_2(\psi,\psi)$ is equivalent to $\tensor^1_2(\psi,\psi)$ in $s$. 
    \end{itemize}
    Hence, $\varphi^*$ is equivalent to one of $\bot,\psi,\tensor^1_2(\psi,\psi)$, $\top$ in $s$. 
    \item For $\phi=\tensor^1_2(\phi_1,\phi_2)$: so $\phi^*=\tensor^1_2(\phi_1^*,\phi_2^*)$. We consider the following cases: 
    \begin{itemize}
        \item $\phi_1^*\equiv_s\top$. Then $\tensor^1_2(\phi_1^*,\phi_2^*)\equiv_s \top$. 
        \item $\phi_1^*\equiv_s\bot$. Then $\tensor^1_2(\phi_1^*,\phi_2^*)\equiv_s \phi_2^*$. 
        \item $\phi_1^*\equiv_s\psi$. If $\phi_2^*\equiv_s\top$ or $\phi_2^*\equiv_s\bot$, it would be the same as former cases. Then we need to distinguish two sub-cases:
        \begin{itemize}
            \item $\phi_2^*\equiv_s\psi$. Then $\tensor^1_2(\phi_1^*,\phi_2^*)\equiv_s\tensor^1_2(\psi,\psi)$. 
            \item $\phi_2^*\equiv_s\tensor^1_2(\psi,\psi)$. Then $t\Vdash\phi^*$ $\iff$ there are $t_1,t_2\subseteq t$ and $t_1\cup t_2=t$ such that $t_1\Vdash\psi$ and $t_2\Vdash\tensor^1_2(\psi,\psi)$ $\iff$ there are $t_1,t_2\subseteq t$, $t_1\cup t_2=t$ and $p_{i_1},p_{i_2},p_{i_3}$ such that $p_{i_1}$ is true in any $w\in t_1$ and for any $w\in t_2$, $p_{i_2}$ or $p_{i_3}$ is true in $w$ $\iff$ there are $p_{i_1},p_{i_2},p_{i_3}$ such that for any $w\in t$, $p_{i_1}$, $p_{i_2}$ or $p_{i_3}$ is true in $w$. However, there are only four propositional letters $p_1,p_2,p_3,p_4$ and in each $w\in s$, two of these propositional letters are true. So consider $p_1,p_2$ and $p_3$, we will know that for any $w\in t\subseteq s$, at least one of $p_1,p_2$ and $p_3$ is true in $w$. Hence, $\tensor^1_2(\psi,\tensor^1_2(\psi,\psi))\equiv_s\top$. 
        \end{itemize}
        \item $\phi_1^*\equiv_s\tensor^1_2(\psi,\psi)$. Then if $\phi_2^*\equiv_s\top$, $\phi_2^*\equiv_s\bot$ or $\phi_2^*\equiv_s\psi$, it would be the same as former cases. And if $\phi_2^*\equiv_s\tensor^1_2(\psi,\psi)$, the proof is similar to the previous case and the result is that $\tensor^1_2(\tensor^1_2(\psi,\psi),\tensor^1_2(\psi,\psi))\equiv_s\top$. 
    \end{itemize}
\end{itemize}
\vspace{-10pt}
\end{proof}
\begin{lemma}
$\tensor^2_3$ is not uniformly definable in $\InqL^\tensor$. 
\end{lemma}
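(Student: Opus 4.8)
The plan is to argue by contradiction with the help of Lemma \ref{lem.T23}. Suppose $\tensor^2_3$ were uniformly definable in $\InqL^\tensor$ by a context $\zeta(q_1,q_2,q_3)$. Since $\zeta$ mentions only finitely many atoms, we may assume without loss of generality that the atoms $p_1,p_2,p_3,p_4$ occurring in $\psi=p_1\lor p_2\lor p_3\lor p_4$ and in the state $s$ fixed just before Lemma \ref{lem.T23} do not occur in $\zeta$. Substituting $\psi$ for each $q_i$ gives $\tensor^2_3(\psi,\psi,\psi)\equiv\zeta(\psi,\psi,\psi)$, hence in particular $\tensor^2_3(\psi,\psi,\psi)\equiv_s\zeta(\psi,\psi,\psi)$. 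Collapsing the three distinguished atoms into one fresh atom $p_0$, the formula $\phi(p_0):=\zeta[p_0/q_1,p_0/q_2,p_0/q_3]$ is a one-variable context with $\phi\in\PL$, not containing $p_1,\dots,p_4$, and $\phi(\psi/p_0)=\zeta(\psi,\psi,\psi)$. By Lemma \ref{lem.T23}, $\zeta(\psi,\psi,\psi)$, and therefore $\tensor^2_3(\psi,\psi,\psi)$, is equivalent relativized to $s$ to one of $\bot$, $\psi$, $\tensor^1_2(\psi,\psi)$, $\top$. So it suffices to refute all four possibilities by a direct analysis of $\tensor^2_3(\psi,\psi,\psi)$ on subsets of $s$.

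Two observations drive the case analysis. First, by the support clause for $\tensor^k_n$, a state $t\subseteq s$ satisfies $\tensor^2_3(\psi,\psi,\psi)$ iff there are $t_1,t_2,t_3\subseteq t$ with each $t_i\Vdash\psi$ and every $w\in t$ in at least two of them. Second, since $\psi$ is an inquisitive disjunction of atoms, $t\Vdash\psi$ iff all worlds of $t$ satisfy a common $p_j$; within $s$ the maximal such ``monochromatic'' sets are exactly the four $3$-element sets $P_j=\{w_{jk}\mid k\neq j\}$, and any two of the $P_j$ intersect. Using these I would check: (i) $s\not\Vdash\tensor^2_3(\psi,\psi,\psi)$, since double-covering the six worlds of $s$ requires $12$ incidences while three monochromatic subsets of $s$ contribute at most $9$ --- this excludes $\top$; (ii) $\{w_{12}\}\Vdash\tensor^2_3(\psi,\psi,\psi)$ via $t_1=t_2=\{w_{12}\}$, $t_3=\emptyset$ --- this excludes $\bot$; (iii) $\{w_{12},w_{13},w_{23}\}\Vdash\tensor^2_3(\psi,\psi,\psi)$ via the sets $\{w_{12},w_{13}\}$, $\{w_{12},w_{23}\}$, $\{w_{13},w_{23}\}$, whereas this state has no common $p_j$ and so fails $\psi$ --- this excludes $\psi$; (iv) on $t=\{w_{12},w_{13},w_{14},w_{23}\}$ one has $t\Vdash\tensor^1_2(\psi,\psi)$ (cover by $\{w_{12},w_{13},w_{14}\}\subseteq P_1$ and $\{w_{12},w_{23}\}\subseteq P_2$), but $t\not\Vdash\tensor^2_3(\psi,\psi,\psi)$, because $w_{14}$ lies only in $P_1$ and in the singleton $\{w_{14}\}$ within $t$, so covering $w_{14}$ twice forces at least two of $t_1,t_2,t_3$ to miss $w_{23}$, leaving $w_{23}$ covered at most once --- this excludes $\tensor^1_2(\psi,\psi)$. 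All four options are thereby ruled out, contradicting the previous paragraph, so no such $\zeta$ exists.

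I expect case (iv) to be the main obstacle: distinguishing $\tensor^2_3(\psi,\psi,\psi)$ from $\tensor^1_2(\psi,\psi)$ relativized to $s$ is delicate because both already fail at $s$ itself, so one must locate a suitable intermediate state and run the multiplicity argument exploiting that $w_{14}$ belongs to essentially only one ``large'' monochromatic block of $s$. The other three cases are short counting or witness-construction arguments. A secondary point requiring care is the reduction from a ternary context to a unary one together with the freshness of $p_1,\dots,p_4$ relative to $\zeta$, which is precisely what makes Lemma \ref{lem.T23} applicable to $\zeta(\psi,\psi,\psi)$.
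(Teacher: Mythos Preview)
Your argument is correct and follows essentially the same strategy as the paper: reduce a putative defining context to a unary one, invoke Lemma \ref{lem.T23}, and then show $\tensor^2_3(\psi,\psi,\psi)$ is $\equiv_s$-distinct from each of $\bot,\psi,\tensor^1_2(\psi,\psi),\top$. The paper merely declares this last step ``obvious'', whereas you supply the explicit witnesses and the counting/separation arguments (including the delicate case (iv)); you are also more careful than the paper about the reduction from a ternary to a unary context and about ensuring $p_1,\dots,p_4$ are fresh for $\zeta$.
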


\begin{proof}
If $\tensor^2_3$ is uniformly definable in $\InqL^\tensor$, there will be a context $\phi(p)$ such that for any $\chi\in\InqL^\tensor$: $\phi(\chi)$ is equivalent to $\tensor^2_3(\chi,\chi,\chi)$. 

However, as we proved in Lemma \ref{lem.T23}, 
for any context $\phi(p_0)\in\InqL^\tensor$, $\phi(\psi/p_0)$ would be equivalent to $\bot$, $\psi$, $\tensor^1_2(\psi,\psi)$ or $\top$ relativized to s. But it is obvious that $\tensor^2_3(\psi,\psi,\psi)$ is not equivalent to $\bot$, $\psi$, $\tensor^1_2(\psi,\psi)$ or $\top$ relativized to $s$. Hence, $\tensor^2_3(\psi,\psi,\psi)$ and $\phi(\psi/p_0)$ are not equivalent relativized to $s$, and hence not equivalent in general, which gives rise to a contradiction! 
\end{proof}


\begin{theorem}\label{thm.definegt}
All the $\tensor^k_n$ are \textbf{not} uniformly definable in $\InqL^\tensor$ except $\tensor^1_n$ and $\tensor^n_n$,  i.e., for any $2\leq k\leq n-1$, $\tensor^k_n$ is not uniformly definable.
\end{theorem}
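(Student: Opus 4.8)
The plan is to reduce the general statement to the case already settled above, namely that $\tensor^2_3$ is not uniformly definable in $\InqL^\tensor$. The mechanism is to collapse an arity-$n$ tensor down to $\tensor^2_3$ by plugging the constants $\top$ and $\bot$ into some of its argument slots and then peeling off one slot at a time using clauses~(3) and~(4) of Proposition~\ref{prop.TBred}.

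Concretely, fix $n$ and $k$ with $2\le k\le n-1$ and suppose, toward a contradiction, that $\tensor^k_n$ is uniformly definable in $\InqL^\tensor$ by a context $\zeta(p_1,\dots,p_n)$, so that $\zeta(\chi_1,\dots,\chi_n)\equiv\tensor^k_n(\chi_1,\dots,\chi_n)$ for all $\chi_1,\dots,\chi_n\in\InqL^\tensor$. I would then form the ternary context
$$\eta(p_1,p_2,p_3):=\zeta(\,p_1,\;p_2,\;p_3,\;\underbrace{\bot,\dots,\bot}_{n-k-1},\;\underbrace{\top,\dots,\top}_{k-2}\,),$$
which is a legitimate $\InqL^\tensor$-context precisely because $n-k-1\ge 0$ and $k-2\ge 0$ (this is the only place $2\le k\le n-1$ is used). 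For arbitrary $\chi_1,\chi_2,\chi_3\in\InqL^\tensor$, uniform definability of $\tensor^k_n$ yields $\eta(\chi_1,\chi_2,\chi_3)\equiv\tensor^k_n(\chi_1,\chi_2,\chi_3,\bot,\dots,\bot,\top,\dots,\top)$. Apply clause~(3) of Proposition~\ref{prop.TBred} $k-2$ times, each application rewriting the rightmost $\top$ away and dropping both indices by $1$; this reaches $\tensor^2_{n-k+2}(\chi_1,\chi_2,\chi_3,\bot,\dots,\bot)$ with $n-k-1$ trailing $\bot$'s. Then apply clause~(4) $n-k-1$ times, each application rewriting the rightmost $\bot$ away and dropping the subscript by $1$, arriving at $\tensor^2_3(\chi_1,\chi_2,\chi_3)$. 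Hence $\eta(\chi_1,\chi_2,\chi_3)\equiv\tensor^2_3(\chi_1,\chi_2,\chi_3)$ for all $\chi_1,\chi_2,\chi_3$, i.e.\ $\eta$ uniformly defines $\tensor^2_3$ in $\InqL^\tensor$, contradicting the preceding lemma and completing the proof.

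The only point requiring care — and hence the main obstacle, such as it is — is checking that every intermediate operator encountered in the peeling satisfies the side conditions of Proposition~\ref{prop.TBred}. During the $\top$-phase the running operator is some $\tensor^{k'}_{n'}$ with $1\le k'\le n'$ and $n'\ge n-k+2\ge 3$, so clause~(3) genuinely applies; during the $\bot$-phase it is $\tensor^2_{m}$ with $m$ descending from $n-k+2$ to $3$, so $m\ge 3$ and $2\le m-1$, and clause~(4) genuinely applies. The degenerate cases are harmless and are absorbed automatically: if $k=2$ there are no $\top$'s to strip, if $k=n-1$ there are no $\bot$'s to strip, and if $n=3$ and $k=2$ then $\eta=\zeta$ already has arity $3$ and no rewriting is needed at all.
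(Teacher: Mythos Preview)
Your proposal is correct and follows essentially the same approach as the paper: both reduce the non-uniform-definability of $\tensor^k_n$ to that of $\tensor^2_3$ by using Proposition~\ref{prop.TBred} to show that substituting $\top$'s and $\bot$'s into some argument slots of $\tensor^k_n$ yields $\tensor^2_3$. The paper's proof is a one-sentence sketch of exactly this reduction, whereas you have carefully spelled out the specific context $\eta$, the order of peeling, and the verification of the side conditions at each step.
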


\begin{proof}
When $2\leq k\leq n-1$ (thus $n\geq 3$), by Proposition \ref{prop.TBred}, $\tensor_3^2$ can be uniformly defined by $\tensor^k_n$ in the way of fixing some components as $\top$ or $\bot$, so $\tensor^2_3$ is not uniformly definable in $\InqL^\tensor$ implies that $\tensor^k_n$ is not uniformly definable in $\InqL^\tensor$. 
\end{proof}

\section{Conclusions and future work}

In this paper, we proposed an epistemic interpretation of the tensor disjunction in dependence logic. The interpretation is inspired by the notion of weak disjunction in Medvedev's early work in terms of the BHK-like semantics. The connection between the two disjunctions is exposed in inquisitive logic with tensor disjunction, studied in the literature. We introduce a powerful dynamic epistemic language in which the corresponding know-how formulae of each $\InqL^\tensor$ formula can be formulated and reduced to a know-how free formula. In particular, the tensor disjunction can be defined by an epistemic formula using propositional quantifiers. We give the axiomatization of our full logic, and generalize the tensor disjunction to a family of $n$-ary operators parametered by a $k\leq n$, which capture the intuitive epistemic situations that one knows a list of $n$ possible answers to $n$ questions such that $k$ of the $n$ answers are correct.  

Besides further technical questions regarding our logic, the generalized tensors particularly invite further investigations. Its obvious combinatorial features may find applications in cryptographic protocols and game theory. To see the connection with the latter, we end the paper with the following interesting scenario where $\tensor^2_3$ makes perfect sense. Consider a badminton match between two teams. Each team has one good player and two other less capable ones. We can measure the abilities of the players by numbers, which will determine the result of the matches in the most obvious way. For team $A$, it is $6,2,2$ for the three players, and for team $B$ it is $5,3,3$. The battle between the two teams consists of three single matches, and the rule of game does not prevent one player from playing two matches if not in a row, although the second time the player will lose $1/3$ of his or her ability due to tiredness. Now, with some reflection, we can see team $B$ has a unique arrangement of the playing players to make sure they can win at least two out of the three matches no matter how team $A$ orders their playing  players. Do you know which one?


\bibliographystyle{aiml22}
\bibliography{name}

\end{document}